\def\cE{\mathcal{E}}
\def\cT{\mathcal{T}}
\def\cA{\mathcal{A}}
\def\cB{\mathcal{B}}
\def\cO{\mathcal{O}}
\def\cS{\mathcal{S}}
\def\cR{\mathcal{R}}
\def\Cat{\mathsf{Cat}}
\theoremstyle{plain}
\newtheorem{theorem}{Theorem}
\newtheorem{lemma}{Lemma}
\newtheorem{proposition}{Proposition}
\newtheorem{corollary}{Corollary}
\newtheorem{conjecture}{Conjecture}
\theoremstyle{definition}
\newtheorem{definition}{Definition}
\theoremstyle{remark}
\numberwithin{equation}{section}
\begin{document}

\begin{center}
{\bf\Large  A combinatorial interpretation of the
            $\kappa^{\star}_{g}(n)$ coefficients
            }
\\
\vspace{15pt} Thomas J. X. Li, Christian M. Reidys
\end{center}

\begin{center}
Institut for Matematik og Datalogi,\\
University of Southern Denmark,\\
 Campusvej 55, DK-5230 Odense M, Denmark\\
Email: thomasli@imada.sdu.dk,  duck@santafe.edu
\end{center}
\centerline{\bf Abstract}{\small
Studying the virtual Euler characteristic of the moduli space of curves, Harer and Zagier
compute the generating function $C_g(z)$ of unicellular maps of genus
$g$. They furthermore identify coefficients, $\kappa^{\star}_{g}(n)$,
which fully determine the series $C_g(z)$.
The main result of this paper is a combinatorial interpretation of
$\kappa^{\star}_{g}(n)$. We show that these enumerate a class of
unicellular maps, which correspond $1$-to-$2^{2g}$ to a specific type
of trees, referred to as O-trees. O-trees are a variant of the
C-decorated trees introduced by Chapuy, F\'{e}ray and Fusy.
We exhaustively enumerate the number $s_{g}(n)$ of shapes of genus $g$
with $n$ edges, which is  a specific class of
unicellular maps with vertex degree at least three.
Furthermore we give combinatorial proofs for expressing the generating functions
$C_g(z)$ and $S_g(z)$
for unicellular maps and shapes in terms of $\kappa^{\star}_{g}(n)$, respectively.
We then prove a two term
recursion for $\kappa^{\star}_{g}(n)$ and that for any fixed $g$, the
sequence $\{\kappa_{g,t}\}_{t=0}^g$ is log-concave, where
$\kappa^{\star}_{g}(n)= \kappa_{g,t}$, for $n=2g+t-1$.

{\bf Keywords}: unicellular map, fatgraph, O-tree, shape-polynomial,
recursion



\section{Introduction}


A unicellular map is a connected graph embedded in a compact orientable surface,
in such a way that its complement is homeomorphic to a polygon. Equivalently,
a unicellular map of genus $g$ with $n$ edges can also be seen as gluing the edges of
$2n$-gon into pairs
to create an orientable surface of genus $g$.
It is  related to the general theory of  map enumeration,
the study of moduli spaces of curves \cite{Harer:86},
 the character theory of symmetric group \cite{Zagier:95,Jackson:87},
the computation of  matrix integrals \cite{lando:2004},
and also considered in a variety of application contexts \cite{penner:2010,reidys:2013}.
The most well-known example of unicellular maps is arguably
the class of plane trees, enumerated by the Catalan numbers (see for example \cite{stanley:2001}).

In \cite{Harer:86} Harer and Zagier study the virtual Euler characteristic of the moduli space of curves.
The number $\epsilon_g(n)$ counting the ways of gluing the edges of $2n$-gon
in order to obtain an orientable surface of genus $g$, i.e.~, the number of
unicellular maps of genus $g$ with $n$ edges turns out to play a crucial role
in their computations and they discover the two term recursion
\begin{equation}\label{E:bb}
(n+1)\epsilon_g(n)=2(2n-1)\epsilon_g(n-1)+ (n-1) (2n-1) (2n-3)\epsilon_{g-1}(n-2).
\end{equation}
Subsequently, they identify certain coefficients, $\kappa^{\star}_{g}(n)$,
which they describe to ``give the best coding of the information
contained in the [\ldots] series'', $\epsilon_g(n)$,~\cite{Harer:86}.
The key link is the following functional relation between the generating
function $K^{\star}_{g}(z)$ of $\kappa^{\star}_{g}(n)$ and the generating function
$C_g(z)$ of $\epsilon_g(n)$:
\[
C_g(z) = \frac{1}{\sqrt{1-4z}} \ K^{\star}_{g}\Big( \frac{z }{1-4z}\Big),
\]
where $K^{\star}_{g}(z)= \sum_{n=2g}^{3g-1}\kappa^{\star}_{g}(n) z^n$.

The main result of this paper is a combinatorial interpretation of the
$\kappa^{\star}_{g}(n)$ coefficients discovered by Harer and Zagier.
$\kappa^{\star}_{g}(n)$ enumerates a class unicellular maps, which
correspond $1$-to-$2^{2g}$ to certain O-trees.
O-trees are a variant of the C-decorated trees
introduced in~\cite{Chapuy:13}, see Theorem~\ref{T:ck}.
Using O-trees,
we exhaustively enumerate a specific class of
unicellular maps with vertex degree at least three, called shapes.
We give combinatorial proofs for expressing the generating functions $C_g(z)$ and $S_g(z)$
for unicellular maps and shapes in terms of $\kappa^{\star}_{g}(n)$, respectively.}
In particular the $\kappa^{\star}_{g}(n)$ are positive integers that
satisfy an analogue of eq.~(\ref{E:bb})
\begin{equation}\label{E:cc}
(n+1)\kappa^{\star}_{g}(n) =(n-1)(2n-1)(2n-3)
\kappa^{\star}_{g-1}(n-2)+ 2 (2n-1) (2n-3)(2n-5)\kappa^{\star}_{g-1}(n-3),
\end{equation}
see Corollary~\ref{C:k} and Theorem~\ref{T:kappa}. Eq.~(\ref{E:cc}) has been
independently discovered by Chekhov {\it et al.} \cite{Chekhov:14} using the matrix model.
We furthermore prove in Proposition~\ref{P:logk} that for any fixed $g$, the
sequence $\{\kappa_{g,t}\}_{t=0}^g$ is log-concave, where
$\kappa^{\star}_{g}(n)= \kappa_{g,t}$, for $n=2g+t-1$.
We conjecture that the
sequences $\{\kappa_{g,t}\}_{t=0}^g$ and $\{s_{g}(n)\}_{n=2g}^{6g-2}$  are infinitely log-concave.

\begin{figure}
\begin{center}
\includegraphics[width=0.7\textwidth]{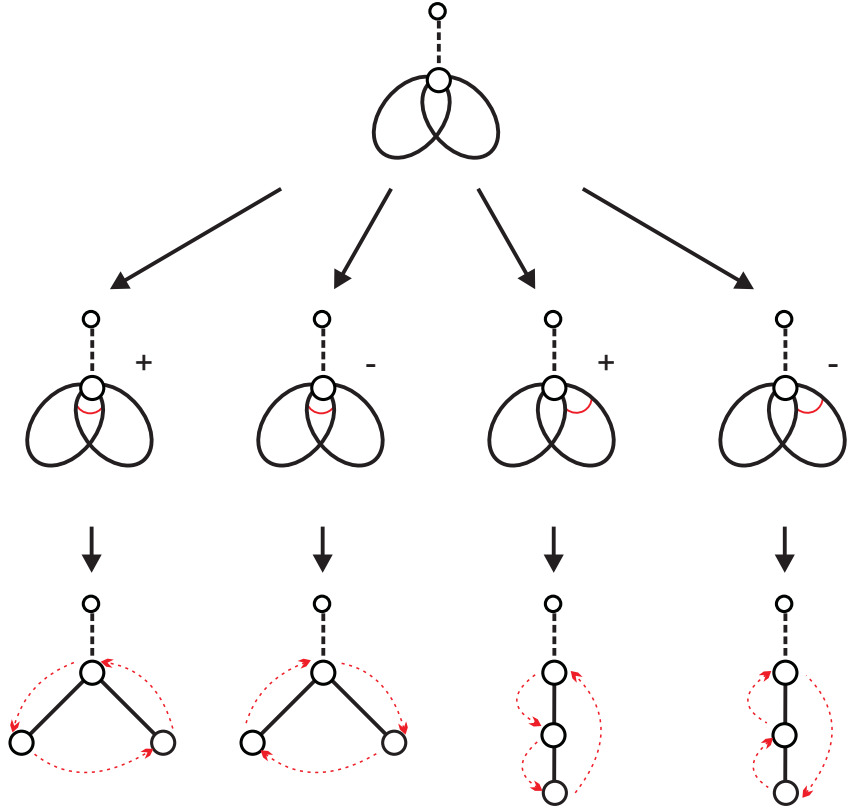}
\end{center}
\caption{\small $\kappa_{1,1}$ enumerates the unique unicellular maps of genus
$1$ with $2$ edges, which correspond $1$-to-$2^{2}$ to O-trees from $\cR_{1,1,0}$.
}\label{F:kappa}
\end{figure}

\section{Background}





A \emph{map} $M$ of genus $g\geq 0$ is a connected graph $G$ embedded on a
closed compact orientable surface $S$ of genus $g$, such that
$S\backslash G$ is homeomorphic to a collection of polygons, which are called the
\emph{faces}
of $M$. Loops and multiple edges are allowed. The (multi)graph $G$ is called the
\emph{underlying graph} of $M$ and $S$ its \emph{underlying surface}. 
Maps are considered up to homeomorphisms between the underlying surfaces.
A \emph{sector} of $M$ consists of two consecutive
edges around a vertex. A \emph{rooted map} is a map with a marked sector, called
the \emph{root};
the vertex incident to the root is called the \emph{root-vertex}.
In figures, we represent the root, by drawing a
dashed edge attaching the root-vertex and a distinguished vertex, called
the \emph{plant}.
By convention, the plant, plant-edge and its associated sector
(around the plant) are not considered, when counting the number of
vertices, edges or sectors.
From now on, all maps are assumed to be rooted and accordingly the underlying
graph of a rooted map is naturally vertex-rooted.
A \emph{unicellular map} is a map with a unique face.
By Euler's characteristic formula $|V|-|E|+|F|=2-2g$,
a unicellular map  of genus $g$ with $n$ edges has $n+1-2g$ vertices.
A \emph{plane tree} is a unicellular map of genus $0$.

We next introduce O-trees, which are directly implied by the concept of
C-decorated trees by Chapuy \textit{et al.}~\cite{Chapuy:13}.

An \emph{O-permutation} is a permutation where all cycles have odd length.
For each O-permutation $\sigma$ on $n$ elements,
the \emph{genus} of $\sigma$ is defined as $(n-\ell(\sigma))/2$,
where $\ell(\sigma)$ is the number of cycles of $\sigma$.

An \emph{O-tree} with $n$ edges is a pair $\alpha=(T,\sigma)$, where $T$
is a plane tree with $n$ edges and $\sigma$ is an O-permutation on $n+1$
elements, see Figure~\ref{F:ex}(a).
The \emph{genus} of $\alpha$ is defined to be the genus of $\sigma$.
We  canonically number the $n+1$ vertices of $T$  from $1$ to $n+1$
according to a left-to-right, depth-first traversal.
Hence $\sigma$ can be seen as a permutation of the vertices of $T$, see
Figure~\ref{F:ex}(b).

The \emph{underlying graph} $G(\alpha)$ of $\alpha$ is the (vertex-rooted)
graph $G$ with $n$ edges, that is obtained from $T$ by merging the vertices
in each cycle of $\sigma$ (so that the vertices of $G$ correspond to the
cycles of $\sigma$) into a single vertex, see Figure~\ref{F:ex}(c).
\begin{figure}[ht]
\begin{center}
\includegraphics[width=0.9\textwidth]{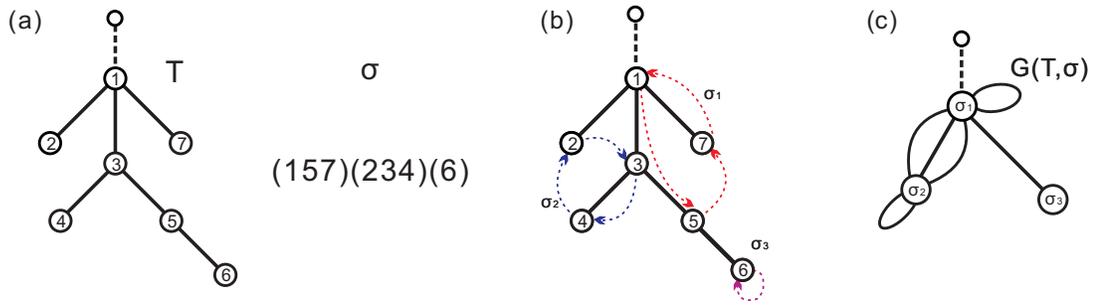}
\end{center}
\caption{\small An O-tree and its underlying graph.
}\label{F:ex}
\end{figure}

\begin{definition}
For $n,g$ nonnegative integers, let $\cE_g(n)$ denote the set of unicellular
maps of genus $g$ with $n$ edges, let  $\cO_g(n)$ be the set
of O-permutations of genus $g$ on $n$ elements  and let $\cT_g(n)$ denote the set
of O-trees of genus $g$ with $n$ edges, i.e., $\cT_g(n)=\cE_0(n)\times\cO_g(n+1)$.
\end{definition}




For two finite sets $\cA$ and $\cB$, let $\cA \uplus \cB$ denote their disjoint
union and $k\cA$  denote the set made of $k$ disjoint copies of $\cA$.
We write $\cA\simeq \cB$ if there exists a bijection between $\cA$ and
$\cB$.

Let us first recall a combinatorial result of \cite{Chapuy:11}:
\begin{proposition}[Chapuy~\cite{Chapuy:11}]\label{prop:1}
For $k\geq 1$, let $\cE_g^{(2k+1)}(n)$ denote the set of maps from $\cE_g(n)$
in which a set of $2k+1$ vertices is distinguished. Then for $g>0$ and
$n\geq 0$,
\begin{align}\label{eq:trisectionrec-maps}
2g\ \!\cE_g(n)\simeq
\biguplus_{k=1}^g \cE_{g-k}^{(2k+1)}(n).
\end{align}
In addition, if $M$ and $(M',S')$ are in correspondence, then the underlying
graph of $M$ is obtained from the underlying graph of $M'$ by merging the
vertices in $S'$ into a single vertex.
\end{proposition}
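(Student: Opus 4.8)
The plan is to prove this using Chapuy's \emph{trisection} surgery. Two things are needed: a canonical way to single out, in every unicellular map of genus $g$, exactly $2g$ corners called \emph{trisections}; and a reversible local operation at a trisection which splits one vertex into an odd number of vertices and lowers the genus. For the first, take $M\in\cE_g(n)$: its one face is bounded by a closed walk of length $2n$ meeting each of the $2n$ corners of $M$ exactly once, so, starting at the root, label the corners $1,\dots,2n$ in the order they occur along this walk. Around a vertex $v$ the corners appear in the cyclic order given by the rotation system, and I would call a corner $c$ of $v$ a \emph{trisection} if the corner that follows $c$ in this cyclic order carries a strictly smaller label while $c$ is not the corner of $v$ with the largest label. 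A bookkeeping argument --- summing over all vertices the numbers of ``ascents'' and ``descents'' of these cyclic label sequences and using $\sum_v\deg(v)=2n$ together with $|V|=n+1-2g$ --- then shows that $M$ has exactly $2g$ trisections; in particular a plane tree has none, because around each of its vertices the labels increase along the rotation past the minimal corner.

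Next I would set up the slicing. Fix a trisection $\tau$ at a vertex $v$ of $M$. The labels around $v$ determine a canonical chain of corners $c_0,c_1,\dots,c_{m-1}$ of $v$, with $c_0$ the minimal corner of $v$ and $\tau$ among them. Cutting $v$ apart into $m$ new vertices $v_0,\dots,v_{m-1}$ along these corners --- redistributing the arcs of the rotation at $v$ that lie between consecutive $c_i$ among the $v_j$, and re-stitching them in the unique cyclic order the chain prescribes --- produces a map $M'$. Since splitting one vertex into $m$ while keeping a single face lowers the genus by exactly $(m-1)/2$, and $M'$ is readily seen to be unicellular with $n$ edges, $m$ must be odd, $m=2k+1$, and $M'\in\cE_{g-k}(n)$; call $k$ the \emph{type} of $\tau$ and put $S'=\{v_0,\dots,v_{m-1}\}$, so that $(M',S')\in\cE_{g-k}^{(2k+1)}(n)$. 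By construction the underlying graph of $M$ is obtained from that of $M'$ by merging $S'$ to a point, which is the ``in addition'' clause.

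It remains to build the inverse gluing and establish bijectivity. Given $(M',S')\in\cE_{g-k}^{(2k+1)}(n)$, label the corners of $M'$ along its face as before; the labels pick out a unique way to merge the $2k+1$ vertices of $S'$ into one vertex $v$ and mark a corner $\tau$ of $v$, and this yields a unicellular map of genus $(g-k)+k=g$. The heart of the argument is to check that gluing and slicing are mutually inverse: that the marked corner $\tau$ is always a trisection, of type exactly $k$, and that slicing it returns $(M',S')$ --- equivalently, that the corner-labelling transforms predictably under both surgeries. Granting this, for each $k$ the pairs $(M,\tau)$ with $M\in\cE_g(n)$ and $\tau$ a trisection of $M$ of type $k$ are in bijection with $\cE_{g-k}^{(2k+1)}(n)$, while $k$ runs over $1,\dots,g$ since the genus stays nonnegative; combined with the count of $2g$ trisections per map, this gives exactly $2g\,\cE_g(n)\simeq\biguplus_{k=1}^g\cE_{g-k}^{(2k+1)}(n)$.

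The trisection count is routine. The hard part will be the slicing and gluing: showing that cutting at a trisection yields a \emph{single-faced} map of the correct genus rather than several faces, that the canonical chain always has odd length, and --- most delicately --- that the gluing is a well-defined two-sided inverse for \emph{every} subset $S'$, not only those visibly coming from a slice. All of these reduce to a careful description of how the corner-labels are permuted when a vertex is split or a set of vertices is merged, and that is where the substance of the proposition lies.
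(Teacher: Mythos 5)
The paper never proves this proposition: it is imported from Chapuy~\cite{Chapuy:11} and used as a black box, and the appendix only supplies proofs of its analogues for O-permutations and O-trees (Propositions~\ref{prop:2} and~\ref{prop:3}). So your proposal is a reconstruction of Chapuy's external argument, and as an outline it identifies the right ingredients: the corner labelling induced by the face tour, the count of $2g$ distinguished corners, a genus-lowering vertex slicing, and an inverse gluing. The parity argument via Euler's formula and the observation that the ``in addition'' clause is built into the surgery are both correct.

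As a proof, however, it has genuine gaps, and they sit exactly where you admit ``the substance of the proposition lies.'' First, the ``canonical chain $c_0,\dots,c_{m-1}$'' is never defined, and this is not a cosmetic omission: in Chapuy's construction the vertex is not cut into $2k+1$ pieces in one step but by an \emph{iterated} trisection (slice into three along the minimal corner, the trisection, and a canonically chosen intermediate corner; classify the resulting marked triple by type; re-slice if the type demands it), and it is precisely this iteration that shows $m$ is odd, that the sliced map remains unicellular, and that every trisection acquires a well-defined type $k$ so that the right-hand side of eq.~(\ref{eq:trisectionrec-maps}) is a disjoint union. Declaring the sliced map ``readily seen to be unicellular'' assumes the key lemma rather than proving it. Second, the inverse direction --- that gluing an \emph{arbitrary} odd marked set $S'$ yields a canonical trisection of type exactly $k$ and that the two surgeries are mutually inverse --- is flagged but not carried out; this is the hardest part of \cite{Chapuy:11}. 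Third, a smaller point: you define a trisection as a descent that is not the maximum-labelled corner of $v$, whereas Chapuy excludes the down-step whose successor is the minimal corner of $v$; both conventions give the count $2g$ (each removes exactly one descent per non-leaf vertex), but the slicing surgery is tied to the specific convention, so one cannot be substituted for the other without redoing the verification. In short, the plan is the correct plan, but the proposition is not yet proved.
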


{\bf Remark:} one key feature of this bijection is that it preserves the
underlying graph of corresponding objects. By multiplying with the factor
$2^{2g}$ (which still preserves the underlying graph),  we obtain
\begin{equation}\label{eq:bi1}
2g\ \! 2^{2g} \cE_g(n)\simeq
\biguplus_{k=1}^g 2^{2k}\cdot 2^{2(g-k)}  \cE_{g-k}^{(2k+1)}(n).
\end{equation}

In analogy to the above decomposition of unicellular maps, there exists
a recursive way to decompose O-permutations:

\begin{proposition}[Chapuy \textit{et al.}~\cite{Chapuy:13}]\label{prop:2}
For $k\geq 1$, let $\cO_g^{(2k+1)}(n)$ be the set of O-permutations from $\cO_g(n)$
having $2k+1$ labeled cycles. Then for $g>0$ and $n\geq 0$,
\begin{equation}\label{eq:bi2}
2g\ \!\cO_g(n)\simeq
\biguplus_{k=1}^g 2^{2k}\cdot \cO_{g-k}^{(2k+1)}(n).
\end{equation}
Furthermore, if $\pi$ and $(\pi',S')$ are in correspondence,
then the cycles of $\pi$ are obtained from the cycles of $\pi'$
by merging labeled cycles in $S'$ into a single cycle.
\end{proposition}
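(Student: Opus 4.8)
The plan is to establish the bijection by a direct surgery on permutations, mirroring Chapuy's proof of Proposition~\ref{prop:1} but carried out in the odd-cycle world, following~\cite{Chapuy:13}. The one genuinely new feature, compared with the map case, is that the surgery must turn a single odd cycle into an \emph{odd} number of \emph{odd} cycles and back; this parity constraint is what produces the factor $2^{2k}$ on the right-hand side, which is absent from Proposition~\ref{prop:1}.

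First I would reduce everything to a single cycle. Fix the linear order $1<2<\cdots<n$ on the ground set. For $\sigma\in\cO_g(n)$, call an element $x$ a \emph{trisection} of $\sigma$ if $x$ is not the smallest element of its cycle (some record-type condition of this flavour is what the argument needs); then each cycle $c$ contributes $|c|-1=2\,\mathrm{genus}(c)$ trisections, and since the genera of the cycles sum to $g$ there are exactly $2g$ of them, so $2g\,\cO_g(n)$ is the set of pairs $(\sigma,x)$ with $x$ a trisection. On the right-hand side, an element of $\cO_{g-k}^{(2k+1)}(n)$ carries a distinguished $(2k+1)$-set of cycles, to be merged into a single cycle. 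Since in either direction the operation touches only the cycle of $\sigma$ containing $x$ (respectively, the distinguished cycles of $\pi'$) and leaves all other cycles alone, it suffices to prove the single-cycle statement: for a linearly ordered ground set $X$ with $|X|=2m+1$,
\[
2m\cdot\{\text{cyclic permutations of }X\}\ \simeq\ \biguplus_{k=1}^{m}2^{2k}\cdot\{\text{O-permutations of }X\text{ of genus }m-k\},
\]
where every such $\pi'$ has exactly $2k+1$ cycles, all of which are taken as the marked set.

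Next I would define the slicing and splicing maps. Given a cyclic permutation $c$ of $X$ and a trisection $x$ (so $x\neq a:=\min X$), read $c$ as the word $a=y_0,y_1,\dots,y_{2m}$ along $c$ starting from $a$, and use the left-to-right minima of this word together with the position of $x$ to cut the cyclic word into sub-cycles, recording one binary parameter for each sub-cycle other than the one containing $a$. I would then check: (i) the number of pieces is some odd $2k+1\ge 3$ and every piece has odd length, so the output is a genuine O-permutation $\pi'$ of genus $m-k$ whose $2k+1$ cycles (the pieces) are marked, together with a vector in $[2^{2k}]$; and (ii) this is invertible --- given $\pi'$ with a marked set of cycles $\{d_0,\dots,d_{2k}\}$, ordered so that $a\in d_0$ and the remaining $d_j$ by their minima, together with a vector in $[2^{2k}]$, splice the $d_j$ back together at the prescribed records, using the bits as orientation data, to recover a cyclic permutation $c$ with a distinguished trisection $x$; the spliced length is a sum of $2k+1$ odd numbers, hence odd, so $c$ is again an O-permutation. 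Verifying that slicing and splicing are mutually inverse, extending them over all cycles of $\sigma$, and noting that by construction the cycles of $\sigma$ arise from those of $\pi'$ by merging the marked ones, yields the proposition together with its ``furthermore'' clause. It may be cleanest to first prove the case $k=1$ (one odd cycle splitting into three, with the factor $2^2$) and then deduce the displayed multi-term form by iteration, exactly as Chapuy does for Proposition~\ref{prop:1}.

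The main obstacle is making the cutting rule in (i)--(ii) precise enough that the parity of the number of pieces and of each piece length comes out automatically, and that the binary parameters are exactly the degrees of freedom needed to reverse the construction --- in other words, pinning down where the $2^{2k}$ really comes from. As a consistency check: multiplying Proposition~\ref{prop:1} by $2^{2g}$ as in~\eqref{eq:bi1}, and using that the bijections involved preserve the underlying graph so that distinguished vertices correspond to distinguished cycles, makes~\eqref{eq:bi2} exactly the statement one expects; but turning this heuristic into a genuine proof would require the tree--permutation bijection of~\cite{Chapuy:13}, so I would instead carry out the self-contained surgery above.
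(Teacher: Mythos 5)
Your overall architecture is the same as the paper's: mark one of the $2g$ non-minimal elements (the paper's $\cO_g^\star(n)\simeq 2g\,\cO_g(n)$), operate only on the cycle containing the mark so that the ``furthermore'' clause is automatic, and convert that one odd cycle into $2k+1$ odd cycles plus $2k$ bits. But the proposal stops exactly where the proof actually lives. The entire content of the proposition is the single-cycle bijection you defer to steps (i)--(ii), and the paper supplies it as a separate statement (Lemma~\ref{lemma:iso_signed}): a bijection between arbitrary words of length $n$ and pairs (O-permutation with $n-2g$ cycles, sign sequence of length $n-2g-1$), with the key feature that the output has one cycle if and only if the word has odd length \emph{and starts with its minimal element}. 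Your sketch of the cutting rule has two concrete problems. First, you read the cycle as a word starting from the minimum $a$; then $a$ is a left-to-right minimum dominating everything after it, the block decomposition degenerates to a single block, and the only remaining cut point is at $x$, which would give two pieces --- the wrong parity. The paper instead writes the cycle as a word starting from the marked \emph{non-minimal} element $i$; this is what forces at least $3$ blocks and hence $k\geq 1$. Second, cutting at left-to-right minima produces blocks of arbitrary parity, so ``the parity of each piece length comes out automatically'' is false as stated. The mechanism that repairs parity is the sign rule of Lemma~\ref{lemma:iso_signed}: an even block surrenders its second element (which becomes a singleton block or is appended to the preceding block) and is tagged $-$, an odd block is tagged $+$. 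The $2k$ bits are therefore not ``orientation data'' for splicing; they record whether an element migration occurred, and without them the splicing map is not well defined. You correctly flag this as ``the main obstacle,'' but that obstacle is the theorem.

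One further caution on your closing consistency check: deducing eq.~(\ref{eq:bi2}) from Proposition~\ref{prop:1} via the tree--permutation correspondence would be circular in this paper, since Proposition~\ref{prop:2} (through Proposition~\ref{prop:3}) is an input to the inductive proof of Theorem~\ref{T:main}, not a consequence of it. The self-contained surgery is the only viable route, and it requires proving the analogue of Lemma~\ref{lemma:iso_signed} in full.
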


Along these lines we furthermore observe:

\begin{proposition}[Chapuy \textit{et al.}~\cite{Chapuy:13}]\label{prop:3}
For $k\geq 1$, denote by $\cT_g^{(2k+1)}(n)$ the set of O-trees from $\cT_g(n)$
in which a set of $2k+1$ cycles is distinguished. Then for $g>0$ and $n\geq 0$,
$$
2g\ \!\cT_g(n)\simeq   \biguplus_{k=1}^g 2^{2k}\cdot \cT_{g-k}^{(2k+1)}(n).
$$
Furthermore, if $\alpha$ and $(\alpha',S')$ are in correspondence,
then the underlying graph of $\alpha$ is obtained from the underlying graph
of $\alpha'$ by merging the vertices corresponding to cycles from $S'$ into
a single vertex.
\end{proposition}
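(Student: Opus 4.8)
The plan is to deduce this proposition directly from Proposition~\ref{prop:2} by taking the Cartesian product with the set $\cE_0(n)$ of plane trees with $n$ edges, exploiting the identity $\cT_g(n)=\cE_0(n)\times\cO_g(n+1)$ from the definition. First I would note that distinguishing a set of $2k+1$ cycles in an O-tree $\alpha=(T,\sigma)$ means, by definition, distinguishing $2k+1$ of the cycles of its O-permutation $\sigma$, so that $\cT_{g-k}^{(2k+1)}(n)=\cE_0(n)\times\cO_{g-k}^{(2k+1)}(n+1)$; the genus bookkeeping is consistent because the genus of an O-tree is by definition that of its O-permutation.

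Next I would apply Proposition~\ref{prop:2} with $n$ replaced by $n+1$, namely $2g\,\cO_g(n+1)\simeq\biguplus_{k=1}^{g}2^{2k}\cO_{g-k}^{(2k+1)}(n+1)$, and multiply through by $\cE_0(n)$ as a Cartesian factor. Using the obvious isomorphisms $k(\cA\times\cB)\simeq\cA\times(k\cB)$ and $\cA\times(\cB\uplus\cC)\simeq(\cA\times\cB)\uplus(\cA\times\cC)$, the left-hand side becomes $2g\,\cT_g(n)$ and the right-hand side becomes $\biguplus_{k=1}^{g}2^{2k}\cT_{g-k}^{(2k+1)}(n)$, which is the claimed bijection. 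Explicitly, a marked copy of $\cT_g(n)$ carrying an element $(T,\pi)$ is sent to a marked copy of some $\cT_{g-k}^{(2k+1)}(n)$ carrying $(T,\pi',S')$, where $(\pi',S')$ and the new copy-index are determined by the bijection of Proposition~\ref{prop:2} applied to $\pi$, the tree $T$ being carried along unchanged.

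For the ``furthermore'' part I would trace underlying graphs through this construction. If $\alpha=(T,\sigma)$ corresponds to $(\alpha',S')=((T,\sigma'),S')$, then by Proposition~\ref{prop:2} the cycles of $\sigma$ arise from those of $\sigma'$ by merging the labeled cycles in $S'$ into one cycle and leaving all others intact, while $T$ is unchanged. Since $G(\alpha)$ and $G(\alpha')$ are obtained from the same tree $T$ by identifying the vertices within each cycle of $\sigma$, respectively $\sigma'$, it follows that $G(\alpha)$ is obtained from $G(\alpha')$ by merging precisely the vertices that correspond to the cycles in $S'$, with edge incidences inherited, which is exactly the assertion.

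I do not anticipate a substantive obstacle: essentially all the combinatorial content sits in Proposition~\ref{prop:2}, and the only points demanding care are the index shift $n\mapsto n+1$ relating a tree's edge count to its O-permutation's ground set, the identification of distinguished cycles of an O-tree with distinguished cycles of its O-permutation, and the observation that the product construction never touches the plane-tree coordinate, so that the graph-preservation statement of Proposition~\ref{prop:2} lifts verbatim to O-trees.
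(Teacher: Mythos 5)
Your proposal is correct and follows essentially the same route as the paper: both deduce the statement from Proposition~\ref{prop:2} by taking the Cartesian product with the plane-tree factor in $\cT_g(n)=\cE_0(n)\times\cO_g(n+1)$ and then note that the underlying-graph claim is inherited because the bijection only merges the labeled cycles and leaves the tree untouched. If anything, you are slightly more explicit than the paper about the index shift $n\mapsto n+1$ and the distribution of the product over the disjoint union, which is a welcome clarification.
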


The proofs of Proposition~\ref{prop:2} and
Proposition~\ref{prop:3} are reformulations
 of those of \cite{Chapuy:13}
in the context of C-permutations and C-decorated trees. For completeness we give
them in the Appendix.

{\bf Remark:} the bijection for O-permutations preserves the cycles, which
implies that the bijection for O-trees preserves the underlying graph of
corresponding objects.



Combining Proposition~\ref{prop:1} and
Proposition~\ref{prop:3}, we inductively derive a bijection
preserving the underlying graphs.
\begin{theorem}[Chapuy \textit{et al.}~\cite{Chapuy:13}]\label{T:main}
For any non-negative integers $n$ and $g$, there exists a bijection
$$
2^{2g}\cE_g(n)\simeq\cT_g(n)= \cE_0(n)\times\cO_g(n+1).
$$
In addition, the cycles of an O-tree naturally correspond to the vertices
of the associated unicellular map, such that the respective underlying
graphs are the same.
\end{theorem}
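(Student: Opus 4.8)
The plan is to prove Theorem~\ref{T:main} by induction on the genus $g$, splicing together the two graph-preserving decompositions recorded in Proposition~\ref{prop:1} and Proposition~\ref{prop:3}. For the base case $g=0$ there is nothing to do: a unicellular map of genus $0$ is a plane tree, and an O-permutation of genus $0$ on $n+1$ elements has $n+1$ cycles, hence is the identity, so $\cO_0(n+1)$ is a singleton and $\cT_0(n)=\cE_0(n)\times\cO_0(n+1)\simeq\cE_0(n)$; on both sides the underlying graph is the tree itself, whose vertices are exactly the singleton cycles, so the vertex--cycle correspondence is immediate.

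For the inductive step I fix $g>0$ and assume the theorem, in the strong form that includes the vertex--cycle correspondence, for every genus smaller than $g$. First I would upgrade the inductive hypothesis to the decorated objects: adjoining a choice of $2k+1$ distinguished vertices on the unicellular side and the $2k+1$ corresponding cycles on the O-tree side turns the bijection $2^{2(g-k)}\cE_{g-k}(n)\simeq\cT_{g-k}(n)$ into a graph-preserving bijection $2^{2(g-k)}\cE_{g-k}^{(2k+1)}(n)\simeq\cT_{g-k}^{(2k+1)}(n)$ that still matches the distinguished vertices with the distinguished cycles, for $1\le k\le g$. Next I would multiply Proposition~\ref{prop:1} by $2^{2g}$ to get eq.~(\ref{eq:bi1}), multiply the upgraded bijection by $2^{2k}$, take the disjoint union over $k$, and compare with Proposition~\ref{prop:3}; since $2^{2k}\cdot 2^{2(g-k)}=2^{2g}$, the right-hand sides match termwise and one arrives at a bijection $2g\cdot 2^{2g}\cE_g(n)\simeq 2g\,\cT_g(n)$. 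Chasing the three correspondences along the way shows that corresponding objects have the same underlying graph --- the merges performed by Proposition~\ref{prop:1} and Proposition~\ref{prop:3} agree precisely because the distinguished vertices and the distinguished cycles correspond --- and that cycles still correspond to vertices.

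Finally I would cancel the $2g$ copies without sacrificing graph-preservation by arguing one graph at a time: for each vertex-rooted graph $G$ with $n$ edges, restrict every set in the chain to the objects with underlying graph $G$; these restrictions are finite, the restricted chain is still a chain of bijections, so the cardinalities of $2^{2g}\cE_g(n)$ and $\cT_g(n)$ restricted to $G$ agree, and choosing any bijection for each $G$ and taking the union over $G$ yields the graph-preserving bijection $2^{2g}\cE_g(n)\simeq\cT_g(n)$; tracking the vertex--cycle correspondence through then gives the last clause of the statement. I expect the main obstacle to be exactly this bookkeeping: threading not just the equality of underlying graphs but the full vertex--cycle correspondence through the decorated versions of the two propositions, and organizing the cancellation of the $2g$ copies graph-by-graph so that nothing is lost. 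Everything else is a routine composition of the two decomposition lemmas.
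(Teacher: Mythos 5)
Your proposal is correct and follows essentially the same route as the paper's own proof: induction on $g$ with the base case $g=0$, combining eq.~(\ref{eq:bi1}) with Proposition~\ref{prop:3} and the inductive hypothesis on the decorated sets $\cE_{g-k}^{(2k+1)}(n)$ and $\cT_{g-k}^{(2k+1)}(n)$ to obtain $2g\,2^{2g}\cE_g(n)\simeq 2g\,\cT_g(n)$, then extracting a $1$-to-$1$ correspondence. Your graph-by-graph cancellation of the factor $2g$ simply makes explicit the extraction step that the paper states more tersely.
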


{\bf Remark:} in~\cite{Chapuy:13}, Chapuy \textit{et al.} prove the existence of a
$1$-to-$2^{n+1}$ correspondence between C-decorated trees and unicellular maps.
The notion of C-permutation and C-decorated tree therein can be viewed as
O-permutation and O-tree carrying a sign with each cycle, respectively.
The reduction from C-decorated trees to O-trees allows us derive a
$1$-to-$2^{2g}$ correspondence between O-trees and unicellular maps.
Furthermore all the results in ~\cite{Chapuy:13} for C-decorated trees have an
O-tree analogue.

The proof of Theorem~\ref{T:main} is a reformulation of that for C-decorated trees
\cite{Chapuy:13}. We give its proof in the Appendix.


\section{Shapes}




\begin{definition}
A shape is a unicellular map having vertices of degree $\geq 3$.
\end{definition}

We adopt the convention that the plant-edge is taken into account
when considering the degree of the root vertex.

\begin{proposition}\cite{Huang:11}
Given a shape of genus $g$ with $n$ edges, we have $2g\leq n \leq 6g-2$.
\end{proposition}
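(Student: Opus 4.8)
The plan is to bound the number of edges $n$ of a shape of genus $g$ by estimating the number of vertices and their degrees. Let the shape have $n$ edges, $v = n+1-2g$ vertices (by Euler's formula, as recalled in the Background), and a single face. Write $d_1, \dots, d_v$ for the degrees of the vertices (with the plant-edge contributing to the degree of the root-vertex, per the stated convention). The handshake identity gives $\sum_{i=1}^{v} d_i = 2n$ (the plant-edge is counted once on the plant side, which is not a vertex of the shape, and once on the root-vertex side; one should double-check the exact bookkeeping here, but the relation $\sum d_i = 2n$ or $2n+1$ will hold up to the plant convention and only shifts constants).

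First I would establish the lower bound $n \geq 2g$. A shape must have at least one vertex, so $v = n+1-2g \geq 1$, giving $n \geq 2g$. (One should confirm that a shape of genus $g$ with exactly one vertex and $2g$ edges actually exists — a single vertex with $2g$ loops glued appropriately realizes genus $g$ with all loops contributing degree $\geq 3$ when $g \geq 1$; the case $g=0$ degenerates and is handled separately or excluded.)

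Next, for the upper bound $n \leq 6g-2$: since every vertex has degree $\geq 3$, we get $2n = \sum_{i=1}^v d_i \geq 3v = 3(n+1-2g)$. Rearranging, $2n \geq 3n + 3 - 6g$, hence $n \leq 6g - 3$. This is slightly off from the claimed $6g-2$, which tells me the plant-edge convention contributes exactly the extra $+1$: if the handshake sum is instead $2n+1$ (the plant-edge adds one to the root-vertex degree without being "doubled" among the shape's vertices), then $2n + 1 \geq 3(n+1-2g)$ gives $n \leq 6g - 2$. So the key step is to pin down the degree-sum identity precisely under the stated convention.

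The main obstacle is purely this bookkeeping of the plant-edge: getting the constant exactly right in the handshake identity so that the bound comes out as $6g-2$ rather than $6g-3$, and verifying that the extremal configuration (all vertices of degree exactly $3$, which forces $n = 6g-2$ and $v = 2g-1$, i.e. a trivalent unicellular map) is genuinely achievable for every $g \geq 1$. I would close by remarking that these extremal shapes are exactly the trivalent unicellular maps, consistent with the known count of such objects appearing later via $s_g(n)$.
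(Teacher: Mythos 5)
Your proposal is correct and follows essentially the same route as the paper: Euler's formula gives $|V|=n+1-2g\geq 1$ for the lower bound, and the degree-sum inequality with all degrees $\geq 3$ gives the upper bound. The bookkeeping point you flagged is resolved exactly as you guessed: under the stated convention the plant-edge adds $1$ to the root-vertex degree but the plant itself is not a vertex, so $\sum_{v\in V}\deg(v)=2n+1$ and $2n+1\geq 3(n+1-2g)$ yields $n\leq 6g-2$, which is precisely the computation in the paper.
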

\begin{proof}
By Euler's characteristic formula, we have $|V|=n+1-2g$,
where $V$ denotes the vertex set of a shape of genus $g$ with $n$ edges.
On the one hand, any shape contains at least one vertex, which implies
$|V|=n+1-2g \geq 1$, i.e., $n \geq 2g$. On the other hand, each vertex $v$
of a shape has $deg(v)\geq 3$. Then we derive $2(n+1)=\sum_{v\in V}deg(v)+1 \geq 3
|V| +1 =3 (n+1-2g)+1$, that is, $n\leq 6g-2$. (Here we consider the plant and
the plant-edge.)
\end{proof}

Let $\cS_g(n)$ denote the set of $\cE_g(n)$-shapes, i.e.~, shapes of genus $g$
with $n$ edges. Let $\cR_g(n)$ denote the set of O-trees from $\cT_g(n)$ such
that each vertex in the underlying graph of the O-tree
contains only vertices of degree $\geq 3$, that is
 \[
 \cR_g(n)=\{(T,\sigma)\in\cE_0(n)\times\cO_g(n+1)|
  \text{ each vertex of }G(T,\sigma) \text{ has degree }\geq 3 \}.
 \]

\begin{lemma}\label{L:shape}
For $g\geq 1 $ and $2g\leq n \leq 6g-2$, we have the bijection
 \[
 2^{2g}\cS_g(n)\simeq\cR_g(n).
\]
In addition, the cycles of an O-tree naturally correspond to the vertices
of the associated unicellular map, in such a way that the respective underlying
graphs are the same.
\end{lemma}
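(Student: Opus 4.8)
The plan is to obtain the stated bijection by simply restricting the bijection of Theorem~\ref{T:main} to the appropriate sub-collections, exploiting the fact that it preserves underlying graphs. Recall from Theorem~\ref{T:main} that there is a bijection $\phi\colon 2^{2g}\cE_g(n)\to\cT_g(n)=\cE_0(n)\times\cO_g(n+1)$ under which the cycles of an O-tree $(T,\sigma)$ correspond to the vertices of the associated unicellular map $M$ in such a way that the underlying graphs $G(M)$ and $G(T,\sigma)$ coincide as vertex-rooted graphs. In particular, $\phi$ sends the $2^{2g}$ copies of a fixed $M\in\cE_g(n)$ to $2^{2g}$ O-trees, each of which has underlying graph equal to $G(M)$.

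Next I would match up the two defining conditions. By definition, $M\in\cS_g(n)$ precisely when every vertex of $G(M)$ has degree $\geq 3$ (here, as stipulated, the plant-edge is counted towards the degree of the root-vertex). Since $G(M)=G(T,\sigma)$ for every O-tree that $\phi$ assigns to a copy of $M$, and since this identification of graphs respects the rooting and hence the plant-edge convention, the condition ``$M$ is a shape'' is equivalent to ``every vertex of $G(T,\sigma)$ has degree $\geq 3$'', which is exactly the condition defining $\cR_g(n)$. Consequently $\phi$ maps the $2^{2g}$ copies of each shape in $\cS_g(n)$ into $\cR_g(n)$; conversely, if $(T,\sigma)\in\cR_g(n)$, then its $\phi$-preimage is a copy of a unicellular map whose underlying graph has all degrees $\geq 3$, i.e.\ a copy of a shape in $\cS_g(n)$. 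Hence $\phi$ restricts to a bijection $2^{2g}\cS_g(n)\simeq\cR_g(n)$, and the ``in addition'' clause is inherited verbatim from Theorem~\ref{T:main}.

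The hypotheses $g\geq 1$ and $2g\leq n\leq 6g-2$ are not needed for the argument itself; they merely delimit the range in which $\cS_g(n)$ (and hence $\cR_g(n)$) is the object of interest, both sides being empty otherwise. The only point I would state explicitly rather than gloss over is that the graph identification in Theorem~\ref{T:main} is an identification of \emph{rooted} graphs, so that the degree of the root-vertex — which by our convention includes the plant-edge — agrees on the two sides; once this is granted, the lemma is an immediate restriction of Theorem~\ref{T:main} and presents no genuine obstacle.
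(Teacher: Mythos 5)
Your proposal is correct and matches the paper's own argument: the paper likewise obtains the lemma by restricting the bijection of Theorem~\ref{T:main} to shapes, using the fact that it preserves underlying graphs and that a unicellular map is a shape exactly when every vertex of its underlying graph has degree $\geq 3$. Your extra remark about the rooted-graph identification and the plant-edge convention is a harmless elaboration of the same reasoning.
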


Note that a unicellular map is a shape if and only if each vertex in the
underlying graph of the map has degree $\geq 3$. Therefore, Lemma~\ref{L:shape}
follows directly from Theorem~\ref{T:main} by restricting the bijection to the
set $\cS_g(n)$ of shapes since the bijection therein
preserves the underlying graph of corresponding objects.

Lemma~\ref{L:shape} allows us to obtain deeper insight into shapes via
O-permutations. To this end we consider the cycle-type of an O-permutation,
i.e., a partition with parts of odd size.
Given an O-permutation from $\cO_g(n+1)$, its cycle-type is a partition $\beta$
of $n+1$ with $n+1-2g$ odd parts. We assume that
$\beta=1^{n+1-2g-t}3^{m_1}\ldots (2j+1)^{m_j}$, with  $t=m_1+\cdots+m_j$.
The partition $\beta=1^{n+1-2g-t}3^{m_1}\ldots (2j+1)^{m_j}$ naturally corresponds to
the partition $\gamma=1^{m_1}\ldots j^{m_j}$ of $g$.
The fact that $\gamma$ is a partition of $g$ follows from the identity
$(n+1-2g-t)+ 3 m_1+\cdots (2j+1) m_j =n+1$. Here $t=m_1+\cdots+m_j=\ell(\gamma)$
denotes the number of odd parts $>1$ of $\beta$, i.e., the number of parts of
$\gamma$. Let $k=n+1-2g-t$ denote number of parts $=1$ of  $\beta$.
It is clear that this a one-to-one correspondence.
Therefore the cycle type $\beta$ of an O-permutation from $\cO_g(n+1)$ can be
indexed by an partition $\gamma$ of $g$.

The number $a_{\gamma}(k)$ of O-permutations of \hbox{$n+1=2g+t+k$}
elements with cycle-type equal to $\beta=1^{k}3^{m_1}\ldots (2j+1)^{m_j}$
is given by
$$
a_{\gamma}(k)=\frac{(2g+t+k)!}{k!\prod_im_i!(2i+1)^{m_i}},
$$
where $\gamma=1^{m_1}\ldots j^{m_j}$.

Let $\cO_{g,t,k}$ denote the set of O-permutations of genus $g$ with $k$ cycles of
length $1$ and $t$ cycles of length $>1$.
Note that the number of elements of an O-permutation from $\cO_{g,t,k}$ is
$n+1= 2g+t+k$. Then we have the following two cases:
\begin{enumerate}
\item For $k=0$, the cardinality $\cO_{g,t,0}$, denoted by $a_{g,t}$, counts
O-permutations of genus $g$ on \hbox{$2g+t$} elements without cycles of length $1$
(or cycle-type having the form $\beta=3^{m_1}\ldots (2j+1)^{m_j}$).
Hence it is  given by
$$
a_{g,t}=\sum_{\substack{\gamma \vdash g \\ \ell(\gamma)=t} } a_{\gamma}(0) =
(2g+t)!\sum_{\substack{\gamma \vdash g \\ \ell(\gamma)=t} }  \frac{1}{\prod_im_i!(2i+1)^{m_i}},
$$
where $\gamma=1^{m_1}2^{m_2}\cdots j^{m_j}$ runs over all partitions of $g$ with
$t$ parts.

\item For arbitrary $k$, each O-permutation in $\cO_{g,t,k}$
consists of an O-permutation from $\cO_{g,t,0}$ together with
$k$ cycles of length $1$. Then
the set $\cO_{g,t,k}$ can be counted by first picking up $k$ elements
from $2g+t+k$ elements and then choosing an O-permutation from $\cO_{g,t,0}$.
Therefore
$$
|\cO_{g,t,k}|={2g+t+k \choose k} a_{g,t}=
\frac{(2g+t+k)!}{k!}\sum_{\substack{\gamma \vdash g \\ \ell(\gamma)=t} }
 \frac{1}{\prod_im_i!(2i+1)^{m_i}}.
$$
\end{enumerate}

By definition,
\[
 \cO_g(n+1)= \biguplus_{t+k=n+1-2g}\cO_{g,t,k}.
\]


Set $n+1=2g+t+k$.
 Let $\cR_{g,t,k}$ denote
 the set of O-trees from $\cR_g(n)$ such that their associated
O-permutation has $k$ cycles of length $1$ and $t$ cycles of length $>1$, i.e.,
 \[
 \cR_{g,t,k}=\{(T,\sigma)\in\cE_0(n)\times\cO_{g,t,k}| \text{ each vertex of }
  G(T,\sigma) \text{ has degree }\geq 3 \}.
 \]
Hence
\[
 \cR_g(n)= \biguplus_{t+k=n+1-2g}\cR_{g,t,k}.
\]

\begin{lemma}
For $k=0$, we have
\[
\cR_{g,t,0}= \cE_0(2g+t-1)\times\cO_{g,t,0}.
\]
Therefore
\[
|\cR_{g,t,0}|= \Cat(2g+t-1)\, a_{g,t}=\frac{(2(2g+t-1))!}{(2g+t-1)!}
\sum_{\substack{\gamma \vdash g \\ \ell(\gamma)=t} }  \frac{1}{\prod_im_i!(2i+1)^{m_i}},
\]
where $\Cat(n):=\frac{(2n)!}{n! (n+1)!}$ is the $n$-th Catalan number
and $\gamma=1^{m_1}2^{m_2}\cdots j^{m_j}$ runs over all partitions of $g$ with $t$ parts.
\end{lemma}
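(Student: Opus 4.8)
The plan is to show that the defining degree condition of $\cR_{g,t,0}$ is automatically satisfied when $k=0$, so that $\cR_{g,t,0}$ coincides with the full product $\cE_0(2g+t-1)\times\cO_{g,t,0}$; the stated count then follows by multiplying $|\cE_0(2g+t-1)|=\Cat(2g+t-1)$ by $|\cO_{g,t,0}|=a_{g,t}$ and simplifying. (If $g$ admits no partition into exactly $t$ parts, then $\cO_{g,t,0}=\emptyset$ and both sides are empty, so assume $1\le t\le g$ and set $n:=2g+t-1\ge1$.)

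For the inclusion $\cE_0(n)\times\cO_{g,t,0}\subseteq\cR_{g,t,0}$, fix such a pair $(T,\sigma)$. Since $\sigma\in\cO_{g,t,0}$ has no fixed point, each of its cycles has odd length $\ge3$. The underlying graph $G(T,\sigma)$ is obtained from the plane tree $T$ by merging, for each cycle $c$ of $\sigma$, the tree-vertices indexed by the elements of $c$; this leaves the edge set unchanged (some edges becoming loops), so the vertex of $G(T,\sigma)$ coming from $c$ has degree $\sum_{i\in c}\deg_T(i)$. As $T$ has $n\ge1$ edges it is connected with every vertex of degree $\ge1$, and $|c|\ge3$, so every vertex of $G(T,\sigma)$ has degree $\ge3$. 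Hence $(T,\sigma)\in\cR_{g,t,0}$, which proves $\cR_{g,t,0}=\cE_0(2g+t-1)\times\cO_{g,t,0}$.

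For the cardinality, $|\cE_0(2g+t-1)|=\Cat(2g+t-1)=\frac{(2(2g+t-1))!}{(2g+t-1)!\,(2g+t)!}$ (the number of plane trees with $2g+t-1$ edges), while $|\cO_{g,t,0}|=a_{g,t}=(2g+t)!\sum_{\gamma\vdash g,\ \ell(\gamma)=t}\frac{1}{\prod_i m_i!(2i+1)^{m_i}}$ by the formula for $a_{g,t}$ derived above; multiplying and cancelling the factor $(2g+t)!$ gives the claimed closed form. The only substantive point is the observation that the ``shape'' (degree $\ge3$) condition on $G(T,\sigma)$ is vacuous precisely because $k=0$ forces every cycle of $\sigma$ to have length $\ge3$ — this is exactly why the case $k=0$ is singled out — so there is no real obstacle here; the remainder is routine bookkeeping with factorials.
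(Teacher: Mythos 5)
Your proposal is correct and follows essentially the same route as the paper's proof: observe that $k=0$ forces every cycle of $\sigma$ to have length $\geq 3$, so the degree-$\geq 3$ condition on $G(T,\sigma)$ holds automatically and the inclusion $\cE_0(2g+t-1)\times\cO_{g,t,0}\subseteq\cR_{g,t,0}$ follows, with the count then being the product of the Catalan number and $a_{g,t}$. Your version is in fact slightly more careful than the paper's, since you justify the degree bound via $\deg_{G}(c)=\sum_{i\in c}\deg_T(i)\geq |c|\geq 3$ rather than merely asserting it.
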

\begin{proof}
By definition, $\cR_{g,t,0}\subseteq \cE_0(2g+t-1)\times\cO_{g,t,0}$.
Given $(T,\sigma)\in \cE_0(2g+t-1)\times\cO_{g,t,0}$, each cycle of O-permutation
$\sigma$ has length $\geq 3$.
Then the underlying graph $G(T,\sigma)$ of $(T,\sigma)$, obtained
from $T$ by merging into a single vertex the vertices in each cycle of $\sigma$,
must have all vertices with degree $\geq 3$. It implies that $(T,\sigma)\in\cR_{g,t,0}$.
Hence $\cR_{g,t,0}= \cE_0(2g+t-1)\times\cO_{g,t,0}$.
\end{proof}

To enumerate O-trees from $\cR_{g,t,k}$ for arbitrary $k$, we observe that they can be
reduced to O-trees from $\cR_{g,t,0}$. The key idea is to eliminate the vertices
corresponding to $1$-cycles from an O-tree, thereby reducing to an O-tree without
$1$-cycles, i.e., O-tree from $\cR_{g,t,0}$.
This elimination on O-trees is reminiscent of R\'{e}my's bijection~\cite{Remy:85} on plane trees,
which is briefly recalled below.

R\'{e}my's bijection reduces a plane tree $T$ with $n$ edges
and a labeled vertex to a plane tree $T'$ with $n-1$ edges
and a sector labeled by $+$ or $-$ as follow
\begin{itemize}
\item if the labeled vertex is a leaf, $T'$ is obtained from  $T$ by
      contracting the edge connecting the labeled vertex and its father.
      Label by $+$ the sector associated with the labeled vertex,
\item if the labeled vertex is a non-leaf, $T'$ is obtained from  $T$ by
      contracting the edge connecting the labeled vertex and its leftmost
      child. Label by $-$ the sector separating the leftmost subtree
      and the remaining subtree of the labeled vertex.
 \end{itemize}
Therefore $(n+1)\cE_0(n) \simeq 2(2n-1)\cE_0(n-1)$, see Figure~\ref{F:remy}.
R\'{e}my's bijection has been applied in~\cite{Chapuy:13,Huang:14,Li}.

\begin{figure} \begin{center}
\includegraphics[width=0.6 \textwidth]{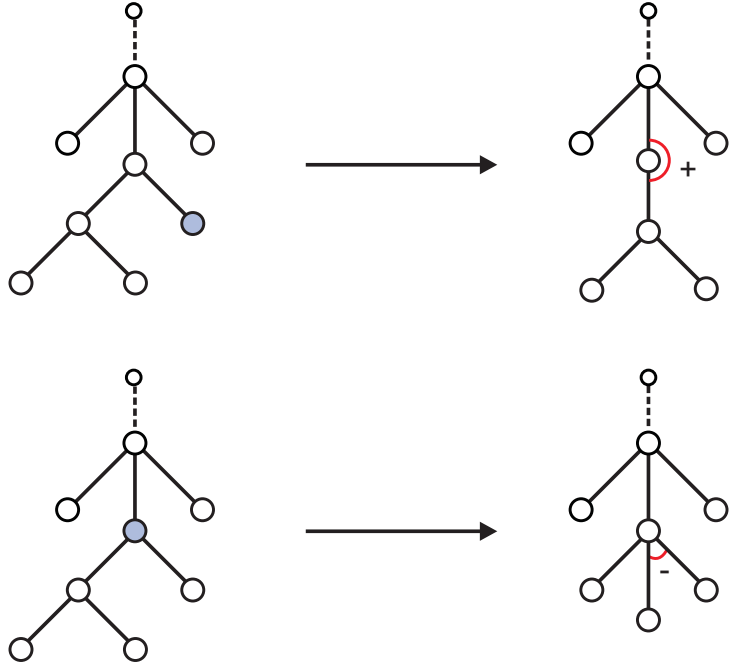}
\caption{R\'{e}my's bijection.}
\label{F:remy}
\end{center} \end{figure}

Given an O-tree $(T,\sigma)\in \cR_{g,t,k}$, its \emph{traversal}
is defined as that of its underlying plane tree
(traveling around the boundary of $T$ starting from the root-sector).
A vertex $v$ of $(T,\sigma)$ is called a \emph{$1$-cycle} if the corresponding
element in $\sigma$ is in a cycle of length $1$.
All sectors around the vertex $v$ are ordered according to the traversal
of $(T,\sigma)$. A sector $\tau$ at $v$ in $(T,\sigma)$ is called \emph{permissible}
if
\begin{itemize}
\item $\tau$ is not the last sector around $v$  according to the traversal of $T$,
\item if vertex $v$ is $1$-cycle, then $\tau$ is not the first two sectors around $v$
      according to the traversal of $T$.
\end{itemize}

\begin{proposition}\label{prop:persec}
Any O-tree $(T,\sigma) \in\cR_{g,t,k}$ has exactly $(2g+t-k-1)$ permissible sectors.
\end{proposition}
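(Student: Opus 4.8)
\emph{Proof proposal.} The plan is to compute the number of permissible sectors by a vertex-by-vertex count of the \emph{non}-permissible ones and then to subtract from the total number of sectors of $T$. We first record the relevant bookkeeping. Since $\sigma$ is an O-permutation on $n+1=2g+t+k$ elements, the plane tree $T$ has $n=2g+t+k-1$ edges, hence $n+1$ vertices and $2n+1$ sectors in total; moreover, with the convention that the plant-edge is counted in the degree of the root, the number of sectors around a vertex $v$ of $T$ equals $\deg_T(v)$, and $\sum_v \deg_T(v)=2n+1$. Since the traversal of $(T,\sigma)$ is by definition the traversal of $T$, the notions ``last sector around $v$'' and ``first two sectors around $v$'' depend only on $T$.

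Next I would partition the $n+1$ vertices of $T$ into the $k$ vertices that are $1$-cycles of $\sigma$ and the remaining $n+1-k$ vertices, and count non-permissible sectors at each. If $v$ is not a $1$-cycle, then by definition its only non-permissible sector is the last one in the traversal, so $v$ contributes exactly $1$. If $v$ is a $1$-cycle, then its non-permissible sectors are the last one together with the first two; the crucial point is that these are \emph{three distinct} sectors, which is the case precisely when $v$ has at least $3$ sectors, i.e.\ $\deg_T(v)\ge 3$. This is where the hypothesis enters: as $\{v\}$ is a cycle of $\sigma$ of length $1$, the vertex $v$ is not merged with any other vertex when forming the underlying graph $G(T,\sigma)$, so $\deg_{G(T,\sigma)}(v)=\deg_T(v)$; and since $(T,\sigma)\in\cR_{g,t,k}\subseteq\cR_g(n)$, every vertex of $G(T,\sigma)$ has degree $\ge 3$. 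Hence each $1$-cycle vertex contributes exactly $3$ non-permissible sectors.

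Summing, the number of permissible sectors is
\[
\sum_v \deg_T(v)-1\cdot(n+1-k)-3\cdot k=(2n+1)-(n+1-k)-3k=n-2k=2g+t-k-1,
\]
the last equality using $n=2g+t+k-1$. The only step that is not pure bookkeeping is the middle paragraph's claim that a $1$-cycle vertex has degree at least $3$ in $T$, so that its ``first two'' and ``last'' sectors are genuinely three distinct sectors; this is exactly the translation of the defining condition of $\cR_{g,t,k}$ into the tree language, together with the observation that a fixed point of $\sigma$ is untouched by the vertex-merging. A subsidiary point worth stating explicitly is the sector/degree identity at the root, which requires keeping track of the plant and plant-edge; once the degree convention is fixed, the identity $\sum_v\deg_T(v)=2n+1$ and hence the final computation are immediate.
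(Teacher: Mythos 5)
Your proof is correct and follows essentially the same route as the paper's: both arguments reduce to the observation that a $1$-cycle vertex is unaffected by the merging that forms $G(T,\sigma)$, hence has degree $\ge 3$ in $T$ itself, so its three excluded sectors are distinct, and then both count the $2n+1$ sectors minus one ``last'' sector per vertex and two further sectors per $1$-cycle vertex. The only difference is cosmetic bookkeeping (you group the exclusions per vertex as $1$ or $3$, the paper as $1$ for every vertex plus $2$ for each $1$-cycle), and your explicit remark about the plant-edge convention for the root's sector count is a welcome clarification of a point the paper leaves implicit.
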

\begin{proof}
By definition of $\cR_{g,t,k}$, each vertex in the  underlying
graph $G(T,\sigma)$ has degree $\geq 3$. Then
each $1$-cycle vertex has degree $\geq 3$ in $T$
since it has the same degree as its corresponding vertex in the underlying graph.

Accordingly, any $1$-cycle vertex has no less than $3$ sectors, whence
its first two sectors never coincide with its last sector.
Note that any $(T,\sigma)\in \cR_{g,t,k}$ has $(2g+t+k-1)$ edges, $2(2g+t+k-1)+1$
sectors and $k$ $1$-cycle vertices. Thus in $(T,\sigma)$, the set of permissible sectors
is the set of all $(T,\sigma)$-sectors excluding all last sectors of vertices
and all the first two sectors of $1$-cycle vertices.
Hence the number of permissible $(T,\sigma)$-sectors is given by
$2(2g+t+k-1)+1-(2g+t+k)-2 k = 2g+t-k-1$.
\end{proof}

Let $\cR_{g,t,k}^{(l)}$ denote the set of $\cR_{g,t,k}$ O-trees with
$l$ permissible, labeled sectors. By Proposition~\ref{prop:persec},
$0\leq l \leq 2g+t-k-1$ and $|\cR_{g,t,k}^{(l)}|= {2g+t-k-1 \choose l}
|\cR_{g,t,k}|$.

Let $\cR_{g,t,k}^{v}$ denote the set of $\cR_{g,t,k}$ O-trees with
one  labeled $1$-cycle vertex.
Since each $\cR_{g,t,k}$ O-tree has $k$ $1$-cycle vertices,
we have $|\cR_{g,t,k}^{v}|= k |\cR_{g,t,k}|$.

\begin{lemma}\label{L:shape1}
For $k\geq 1$, there exists a bijection between $\cR_{g,t,k}^{v}$, the set
of $\cR_{g,t,k}$ O-trees with
one  labeled $1$-cycle vertex and $\cR_{g,t,k-1}^{(1)}$, the set
of $\cR_{g,t,k-1}$ O-trees with one permissible, labeled sector.
Accordingly we have
\[k |\cR_{g,t,k}|= (2g+t-k)  |\cR_{g,t,k-1}| .\]
\end{lemma}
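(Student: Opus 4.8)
The plan is to build an explicit bijection $\Phi\colon\cR_{g,t,k}^{v}\to\cR_{g,t,k-1}^{(1)}$ in the spirit of R\'emy's bijection. The crucial structural fact, already used in the proof of Proposition~\ref{prop:persec}, is that a $1$-cycle vertex of an O-tree from $\cR_{g,t,k}$ has degree $\geq 3$ in the underlying plane tree, hence is \emph{never a leaf}. Consequently, when we apply R\'emy's operation at a labeled $1$-cycle vertex only the ``non-leaf case'' can occur; this is exactly why the output carries no $\pm$ sign, and we will identify the distinguished output sector with a permissible sector.

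\textbf{The forward map.} Given $(T,\sigma)\in\cR_{g,t,k}$ with labeled $1$-cycle vertex $v$ (of degree $d\geq 3$ in $T$), let $c$ be the leftmost child of $v$, contract the edge $vc$, and merge $v$ and $c$ into a single vertex $w$ of the resulting plane tree $T'$, which has one fewer edge. Since $v$ is a fixed point of $\sigma$, I would let $\sigma'$ be $\sigma$ with that fixed point deleted and with $c$ relabeled as $w$ according to the depth-first numbering of $T'$; this is again an O-permutation, with one fewer cycle and one fewer $1$-cycle, so $\sigma'\in\cO_{g,t,k-1}$ and the genus $((n-1)-(\ell(\sigma)-1))/2=g$ is unchanged. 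The only vertex of $G(T',\sigma')$ whose degree can differ from $G(T,\sigma)$ is the one containing $w$, and merging $v$ into $c$'s cycle raises that vertex's total degree by $d-2\geq 1$, so it remains $\geq 3$; hence $(T',\sigma')\in\cR_{g,t,k-1}$. Finally I take $\tau$ to be the sector at $w$ separating the subtrees coming from $c$ from the remaining children of $v$. One checks $\tau$ is permissible: it is never the last sector at $w$ (the second child of $v$ lies beyond it, as $v$ has at least two children), and if $w$ happens to be a $1$-cycle of $\sigma'$ — which occurs precisely when $c$ was itself a fixed point of $\sigma$, forcing $c$ to have degree $\geq 3$ and hence at least two children in $T$ — then $\tau$ lies strictly after the first two sectors at $w$. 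This gives $\Phi(T,\sigma,v)=(T',\sigma',\tau)\in\cR_{g,t,k-1}^{(1)}$.

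\textbf{The inverse map and the main obstacle.} Conversely, given $(T',\sigma',\tau)\in\cR_{g,t,k-1}^{(1)}$ with $\tau$ at a vertex $w$, perform the corresponding R\'emy expansion: since $\tau$ is not the last sector, it splits the children of $w$ into an initial (possibly empty) block and a nonempty remaining block; keep the initial block under $w$, create a new vertex $v$ in the former position of $w$ whose children are $w$ followed by the remaining block, and add a fixed point for $v$ to $\sigma'$ (after renumbering) to get $\sigma\in\cO_{g,t,k}$. Here $v$ has at least two children, hence degree $\geq 3$. The step that genuinely requires care — the main obstacle — is showing the vertex of $G(T,\sigma)$ containing $w$ still has degree $\geq 3$: its total degree drops by exactly $d-2$ where $d=\deg(v)$, so I must show it was large enough beforehand. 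When $w$ is not a $1$-cycle of $\sigma'$, its cycle has odd length $\geq 3$, contributing at least two extra units of degree beyond $\deg_{T'}(w)=1+(\text{number of children})$, which suffices; when $w$ is a $1$-cycle, permissibility forces $\tau$ past the first two sectors, so $w$ retains at least two children after the split, again giving degree $\geq 3$. Thus the output lies in $\cR_{g,t,k}$, and labeling $v$ places it in $\cR_{g,t,k}^{v}$.

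\textbf{Conclusion.} R\'emy's contraction at a non-leaf vertex and the expansion at a sector are mutually inverse on plane trees, and deleting versus inserting a fixed point are visibly mutually inverse on O-permutations, so $\Phi$ is a bijection $\cR_{g,t,k}^{v}\simeq\cR_{g,t,k-1}^{(1)}$. Taking cardinalities with $|\cR_{g,t,k}^{v}|=k\,|\cR_{g,t,k}|$ and $|\cR_{g,t,k-1}^{(1)}|={2g+t-k \choose 1}|\cR_{g,t,k-1}|=(2g+t-k)|\cR_{g,t,k-1}|$ yields $k\,|\cR_{g,t,k}|=(2g+t-k)|\cR_{g,t,k-1}|$.
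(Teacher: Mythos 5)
Your proof is correct and follows essentially the same route as the paper's: apply R\'emy's contraction at the labeled $1$-cycle vertex (necessarily a non-leaf of degree $\geq 3$), delete the corresponding fixed point of $\sigma$, and check that the distinguished sector is permissible, with the inverse given by the R\'emy expansion at that sector. You are in fact somewhat more explicit than the paper in verifying that the minimum-degree-three condition on the underlying graph is preserved in both directions, which the paper largely leaves implicit.
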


\begin{proof}
Suppose we are given $(T,\sigma,v)\in \cR_{g,t,k}^{v}$, and a $1$-cycle vertex $v$.
$v$ has degree $\geq 3$ and is not a leaf.

We construct an O-tree $(T',\sigma',s)$ from $\cR_{g,t,k-1}^{(1)}$ as follows:
we apply R\'{e}my's bijection to the plane tree $T$ with respect to the non-leaf $v$,
i.e., contracting the edge connecting $v$ and its leftmost
child. We obtain a plane tree $T'$ together with a labeled sector $s$.
The correspondence between vertices in  $T$ and those in $T'$
gives us a canonical relabelling of elements of the permutation $\sigma$
excluding the $1$-cycle corresponding to $v$.
Let $\sigma'$ denote the permutation obtained from $\sigma$
by deleting $1$-cycle corresponding to $v$ and relabeling,
see Figure~\ref{F:sbij}.

We define the mapping
$$
\Pi \colon \cR_{g,t,k}^{v}  \to \cR_{g,t,k-1}^{(1)}, \quad
(T,\sigma,v) \mapsto (T',\sigma',s).
$$

\begin{figure}
\begin{center}
\includegraphics[width=0.9\textwidth]{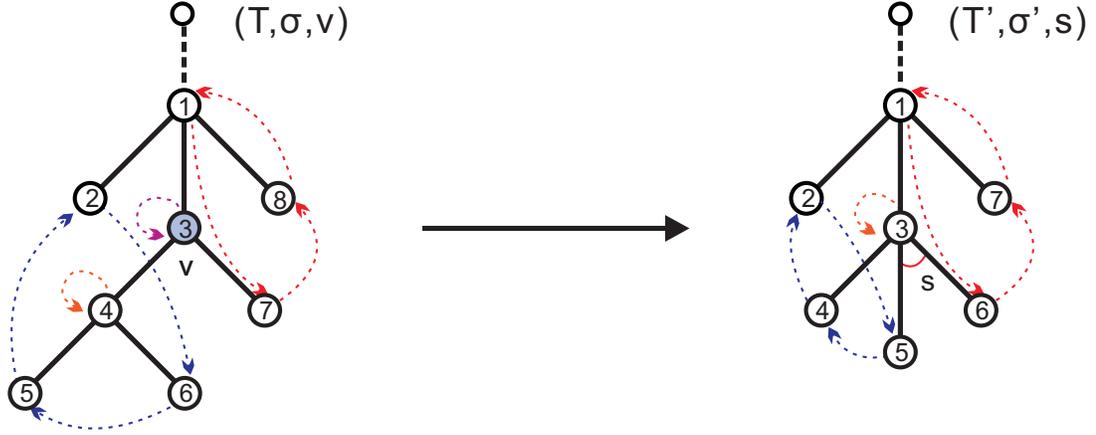}
\end{center}
\caption{\small The bijection $\Pi$.
}\label{F:sbij}
\end{figure}

First we show that $\Pi$ is well-defined.
By construction, $T'$ has $2g+t+k-2$ edges and
all cycles in $\sigma'$ have odd length, i.e., $\sigma'$ is an O-permutation.
Further $\sigma'$  has  $2g+t+k-1$ elements,
$k-1$ cycles of length $1$ and $t$ odd cycles of length $>1$.
Hence $(T',\sigma')\in \cR_{g,t,k-1}$.
Let $v'$ denote the vertex in $T'$, to which sector $s$ belongs to.
$s$ is not the last sector around $v'$, since otherwise, by construction of
R\'{e}my's bijection, the $1$-cycle vertex $v$ in $T$ has degree at most two,
a contradiction.
If $v'$ is a $1$-cycle in $(T',\sigma')$, then
the leftmost child of $v$ is a $1$-cycle in $(T,\sigma)$ and $v_1$ has degree
at least $\geq 3$ in $T$. By the way of contracting the edge connecting $v$ and $v_1$
and labeling the sector, $s$ is then not one of the first two sectors around $v'$.
This shows that $s$ is permissible, whence $(T',\sigma',s)\in \cR_{g,t,k-1}^{(1)}$ and
$\Pi$ is well-defined.

To recover $(T,\sigma,v)\in \cR_{g,t,k}^{v}$ from $(T',\sigma',s)\in \cR_{g,t,k-1}^{(1)}$,
we apply the inverse of R\'{e}my's bijection to $T'$ with respect to the sector $s$,
which is labeled $-$ and obtain a plane tree $T$ with non-leaf vertex $v$.
By construction of
R\'{e}my's bijection and the definition of permissible sector, $v$ has degree $\geq 3$.
Set $\sigma$ to be the permutation obtained from $\sigma'$ by adding the $1$-cycle
corresponding to $v$ and relabeling according to the correspondence between vertices
in  $T'$ and those in $T$, see Figure~\ref{F:sbij}.
It is clear that this is the inverse of $\Pi$, whence $\Pi$ is bijective.
\end{proof}

By applying Lemma~\ref{L:shape1} successively, we derive
\begin{eqnarray*}
|\cR_{g,t,k}|&=& \frac{2g+t-k}{k}  |\cR_{g,t,k-1}|\\
&=& \frac{2g+t-k}{k} \cdot \frac{2g+t-k+1}{k-1}  \cdot |\cR_{g,t,k-2}| \\
&=&\cdots \\
&=& {2g+t-1 \choose k } |\cR_{g,t,0}| .
\end{eqnarray*}
Accordingly, Lemma~\ref{L:shape1} induces a bijection from $\cR_{g,t,k}$ to
$\cR_{g,t,0}^{(k)}$:
\begin{lemma}\label{L:shape2}
For any $k$, there exists a bijection
from $\cR_{g,t,k}$ to $\cR_{g,t,0}^{(k)}$, the set of $\cR_{g,t,0}$ O-trees with
$k$ permissible, labeled  sectors:
\[|\cR_{g,t,k}|= {2g+t-1 \choose k } |\cR_{g,t,0}| .\]
\end{lemma}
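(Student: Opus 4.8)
The cardinality statement is precisely the iterated identity displayed just above the lemma, obtained by applying Lemma~\ref{L:shape1} $k$ times, so the real content is to produce an explicit bijection $\Phi\colon\cR_{g,t,k}\to\cR_{g,t,0}^{(k)}$. The plan is to build $\Phi$ as a $k$-fold composition of the basic move $\Pi$ of Lemma~\ref{L:shape1}, which converts an O-tree carrying a distinguished $1$-cycle vertex into an O-tree with one fewer $1$-cycle carrying a distinguished permissible sector, by a single R\'emy-type contraction. Since $\Pi$ must be fed a distinguished $1$-cycle while an element of $\cR_{g,t,k}$ carries none, I would first attach one canonically: among the $k$ $1$-cycle vertices of $(T,\sigma)\in\cR_{g,t,k}$, single out the $1$-cycle vertex $v$ with the largest label (equivalently, the one visited last in the depth-first traversal of $T$). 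Since the subtree of $v$ is a contiguous block of labels starting at $v$, it then contains no other $1$-cycle; in particular the leftmost child of $v$ is not a $1$-cycle.

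With this convention I would define $\Phi$ iteratively: apply $\Pi$ to $\big((T,\sigma),v\big)$ to obtain $(T_1,\sigma_1)\in\cR_{g,t,k-1}$ together with a permissible sector $s_1$; single out the last $1$-cycle of $(T_1,\sigma_1)$ and apply $\Pi$ again to get $(T_2,\sigma_2)\in\cR_{g,t,k-2}$ and a permissible sector $s_2$; after $k$ steps, set $\Phi(T,\sigma)=\big((T_k,\sigma_k),\{s_1,\dots,s_k\}\big)\in\cR_{g,t,0}^{(k)}$, carrying the sectors produced at earlier steps along at each step. The inverse runs the process backwards: given $\big((T',\sigma'),\{s_1,\dots,s_k\}\big)$, identify the sector that was created last, apply the inverse R\'emy move (re-inserting a $1$-cycle vertex), and repeat $k$ times.

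Two points carry the weight. First, one must check that a permissible sector created at an early step remains permissible after all later contractions, so that $\Phi$ genuinely lands in $\cR_{g,t,0}^{(k)}$; the key observation is that eliminating the currently last $1$-cycle only alters its own subtree, leaving every other $1$-cycle vertex untouched and merely re-parenting, rather than re-ordering, the region in which an earlier marked sector lies, while in the one delicate case---the marked sector sitting at the very vertex being contracted---the sector only drifts further from the last sector of its vertex and so stays permissible. Second, and this is the main obstacle, one must show that the \emph{set} $\{s_1,\dots,s_k\}$, with its order of creation forgotten, still determines how to unwind: one needs a position in the traversal of $(T_k,\sigma_k)$ canonically attached to the most recently created sector so that the inverse R\'emy move applies unambiguously, and one must verify that undoing that step restores exactly the configuration present before it, with the remaining $k-1$ marked sectors in their correct relative positions. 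Once the forward map is well defined and the unwinding deterministic, $\Phi$ is a bijection, and counting fibres recovers $|\cR_{g,t,k}|={2g+t-1 \choose k}\,|\cR_{g,t,0}|$.
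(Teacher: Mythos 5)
Your proposal follows the paper's own route exactly: the cardinality identity is obtained by iterating Lemma~\ref{L:shape1} $k$ times, and the bijection is taken to be the $k$-fold composition of the map $\Pi$, which is all the paper itself does (its printed proof is just the telescoping product $|\cR_{g,t,k}|=\tfrac{2g+t-k}{k}|\cR_{g,t,k-1}|=\cdots=\binom{2g+t-1}{k}|\cR_{g,t,0}|$ followed by the assertion that Lemma~\ref{L:shape1} ``induces'' the bijection). If anything you supply more detail than the paper: the canonical selection of the last-labeled $1$-cycle and the two verification points you flag (permissibility of earlier marked sectors surviving later contractions, and deterministic unwinding from the unordered set of marked sectors) are exactly the details the paper leaves implicit.
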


{\bf Remark:} given an $\cR_{g,t,k}$ O-tree, the number $k$ of $1$-cycle vertices
              is bounded by $k\leq 2g+t-1$.

In Figure~\ref{F:shape}, we show how to generate all $\cR_{1,1,1}$ and $\cR_{1,1,2}$ O-trees
from $\cR_{1,1,0}$ O-trees.

\begin{figure}
\begin{center}
\includegraphics[width=0.9\textwidth]{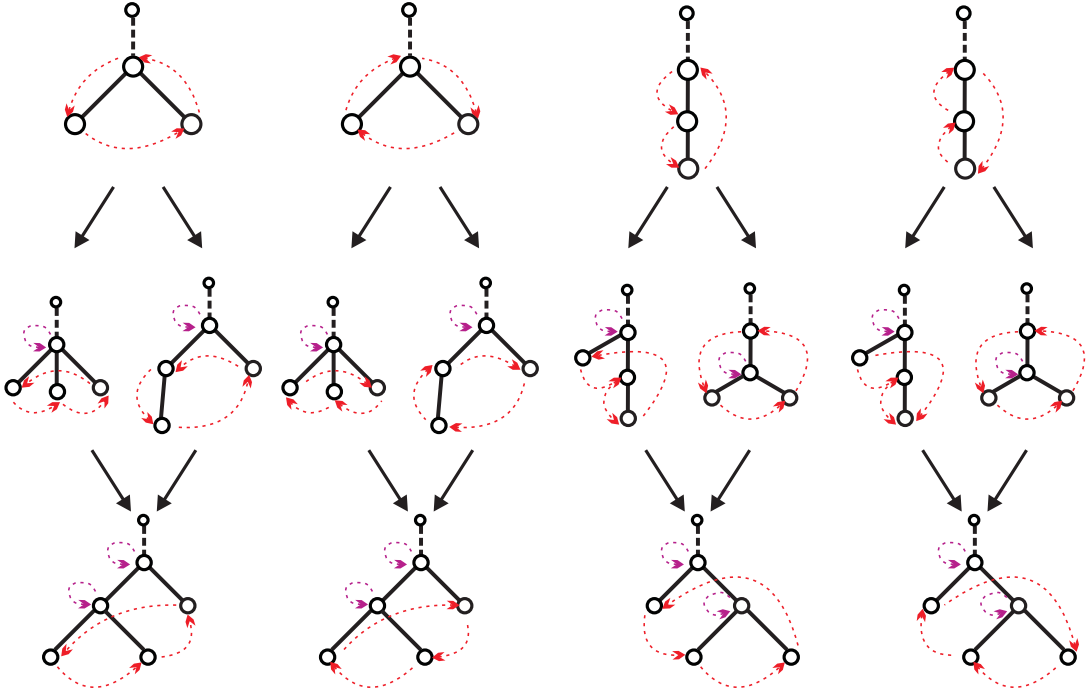}
\end{center}
\caption{\small Generation of all $\cR_{1,1,1}$ and $\cR_{1,1,2}$ O-trees
from $\cR_{1,1,0}$ O-trees.
}\label{F:shape}
\end{figure}

Let
\[
\kappa_{g,t}= \frac{|\cR_{g,t,0}|}{2^{2g}} =\frac{(2(2g+t-1))!}{2^{2g} (2g+t-1)!}
\sum_{\substack{\gamma \vdash g \\ \ell(\gamma)=t} }  \frac{1}{\prod_im_i!(2i+1)^{m_i}},
\]
where $\gamma=1^{m_1}2^{m_2}\cdots j^{m_j}$ is a partition of $g$ with $t$ parts.

For $g\geq 1$, let $s_g(n)$ be the number of shapes of genus $g$ with $n$ edges
and $S_g(z)$ denote the corresponding generating polynomial
$S_g(z)=\sum_{n=2g}^{6g-2} s_g(n) z^n$ . Then
\begin{lemma}\label{L:s}\cite{Huang:14}
For any $g\geq 1$, the generating polynomial of shapes is given by
\[
S_g(z)=\sum_{t=1}^{g} \kappa_{g,t} z^{2g+t-1} (1+z)^{2g+t-1}.
\]
\end{lemma}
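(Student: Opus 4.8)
The plan is to compute $S_g(z)$ by a direct counting argument, stratifying shapes of genus $g$ according to the number of $1$-cycles and odd cycles $>1$ in the O-permutation of the corresponding O-tree. First I would invoke Lemma~\ref{L:shape}, which gives $2^{2g}\cS_g(n)\simeq\cR_g(n)$, so that $2^{2g} s_g(n) = |\cR_g(n)|$. Then I would use the decomposition $\cR_g(n)=\biguplus_{t+k=n+1-2g}\cR_{g,t,k}$ to split the count according to the type of the O-permutation, writing $2^{2g} s_g(n) = \sum_{t+k=n+1-2g}|\cR_{g,t,k}|$. Since an $\cR_{g,t,k}$ O-tree has genus $g$ with the constraint that the underlying graph has minimum degree $3$, and since each vertex of degree $\geq 3$ forces the associated cycle to have length $\geq 3$ only when it is a true (non-$1$-cycle) vertex, the range of $t$ is $1\leq t\leq g$, matching the summation range claimed in the statement.

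Next I would apply Lemma~\ref{L:shape2}, which asserts $|\cR_{g,t,k}| = \binom{2g+t-1}{k}|\cR_{g,t,0}|$, to reduce everything to the base case $k=0$. Substituting $n = 2g+t+k-1$ and summing over $n$, the generating polynomial becomes
\[
S_g(z) = \frac{1}{2^{2g}}\sum_{n=2g}^{6g-2} |\cR_g(n)| z^n
       = \frac{1}{2^{2g}}\sum_{t=1}^{g}\sum_{k\geq 0}\binom{2g+t-1}{k}|\cR_{g,t,0}|\, z^{2g+t+k-1}.
\]
Pulling the $k$-independent factor out and recognizing the inner sum over $k$ as a binomial expansion,
\[
\sum_{k\geq 0}\binom{2g+t-1}{k} z^{2g+t-1+k} = z^{2g+t-1}(1+z)^{2g+t-1},
\]
one obtains $S_g(z) = \sum_{t=1}^{g} \frac{|\cR_{g,t,0}|}{2^{2g}} z^{2g+t-1}(1+z)^{2g+t-1}$, and the definition $\kappa_{g,t}=|\cR_{g,t,0}|/2^{2g}$ finishes the identity.

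The only genuine subtlety — and the step I would be most careful about — is verifying that the limits of the double sum are consistent: that $k$ really does range over $0\leq k\leq 2g+t-1$ (so that the finite binomial sum is exactly $(1+z)^{2g+t-1}$ with no missing or extra terms), and that for each fixed $t$ with $1\leq t\leq g$ the exponent $n=2g+t+k-1$ sweeps out precisely a sub-range of $[2g,6g-2]$, so that reorganizing $\sum_n$ into $\sum_t\sum_k$ loses nothing. The upper bound on $k$ is exactly the Remark following Lemma~\ref{L:shape2}, and the constraint $t\leq g$ comes from the fact that a genus-$g$ O-permutation with no $1$-cycles has all parts odd and $\geq 3$, forcing at most $g$ such parts; these two facts together show the reindexed double sum is a legitimate rearrangement. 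Everything else is the routine binomial-theorem manipulation above, so no further obstacle is expected.
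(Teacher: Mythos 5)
Your proposal is correct and follows essentially the same route as the paper's own proof: reduce to O-trees via Lemma~\ref{L:shape}, decompose $\cR_g(n)$ into the pieces $\cR_{g,t,k}$, apply Lemma~\ref{L:shape2} to express $|\cR_{g,t,k}|$ as $\binom{2g+t-1}{k}|\cR_{g,t,0}|$, and finish with the binomial theorem. The bookkeeping of the summation ranges that you flag as the main subtlety is exactly the reindexing $\biguplus_{n=2g}^{6g-2}\cR_g(n)=\biguplus_{t=1}^{g}\biguplus_{k=0}^{2g+t-1}\cR_{g,t,k}$ that the paper performs, so there is no gap.
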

\begin{proof}
By Lemma~\ref{L:shape}, we have
$2^{2g} \cS_g(n)\simeq \cR_g(n) = \biguplus_{t+k=n+1-2g} \cR_{g,t,k}$ and furthermore
 \[
2^{2g}\biguplus_{n=2g}^{6g-2} \cS_g(n)\simeq \biguplus_{n=2g}^{6g-2} \cR_g(n) =
\biguplus_{t=1}^{g} \biguplus_{k=0}^{2g+t-1} \cR_{g,t,k}.
\]
Therefore, by Lemma~\ref{L:shape2},
\begin{eqnarray*}
S_g(z) &=& \sum_{n=2g}^{6g-2} |\cS_g(n)| z^n\\
       &=& \frac{1}{2^{2g}} \sum_{t=1}^{g}  \sum_{k=0}^{2g+t-1}  | \cR_{g,t,k}| z^{2g+t+k-1}\\
       &=& \frac{1}{2^{2g}} \sum_{t=1}^{g}  \sum_{k=0}^{2g+t-1} {2g+t-1 \choose k } |\cR_{g,t,0}| z^{2g+t+k-1}\\
      &=& \frac{1}{2^{2g}} \sum_{t=1}^{g}  |\cR_{g,t,0}| z^{2g+t-1} \sum_{k=0}^{2g+t-1} {2g+t-1 \choose k } z^{k}\\
       &=& \frac{1}{2^{2g}} \sum_{t=1}^{g}  |\cR_{g,t,0}| z^{2g+t-1} (1+z)^{2g+t-1}\\
      & =& \sum_{t=1}^{g} \kappa_{g,t} z^{2g+t-1} (1+z)^{2g+t-1}.
\end{eqnarray*}
\end{proof}

\begin{corollary}
We have
\begin{equation} \label{E:s}
s_g(n)=\sum_{t=1}^{g} \kappa_{g,t}{2g+t-1 \choose n -(2g+t-1) },
\end{equation}
where $ {n \choose k} =0$ if $k<0$ or $k>n$.
\end{corollary}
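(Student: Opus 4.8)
The plan is to obtain eq.~(\ref{E:s}) directly from Lemma~\ref{L:s} by extracting the coefficient of $z^n$ on both sides of the identity
\[
S_g(z)=\sum_{t=1}^{g} \kappa_{g,t}\, z^{2g+t-1} (1+z)^{2g+t-1}.
\]
Since $S_g(z)=\sum_{n=2g}^{6g-2} s_g(n) z^n$ by definition, we have $s_g(n)=[z^n]S_g(z)$, and because the operator $[z^n](\cdot)$ is linear it suffices to compute $[z^n]\bigl(z^{2g+t-1}(1+z)^{2g+t-1}\bigr)$ for each fixed $t$ with $1\le t\le g$.

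For the individual term, factoring out $z^{2g+t-1}$ gives
\[
[z^n]\Bigl(z^{2g+t-1}(1+z)^{2g+t-1}\Bigr)=[z^{\,n-(2g+t-1)}]\,(1+z)^{2g+t-1},
\]
and by the binomial theorem $(1+z)^{2g+t-1}=\sum_{k\ge 0}\binom{2g+t-1}{k}z^k$, so this coefficient equals $\binom{2g+t-1}{\,n-(2g+t-1)\,}$. Summing over $t$ yields
\[
s_g(n)=\sum_{t=1}^{g}\kappa_{g,t}\binom{2g+t-1}{\,n-(2g+t-1)\,},
\]
which is eq.~(\ref{E:s}). Here the stated convention $\binom{n}{k}=0$ for $k<0$ or $k>n$ is exactly what makes the formula valid for all $n$ in the range $2g\le n\le 6g-2$: a term with index $t$ contributes only when $\tfrac{1}{2}(n+1)\le 2g+t-1\le n$, and outside that window the binomial coefficient vanishes automatically, so no case analysis on $t$ versus $n$ is needed.

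There is no real obstacle here; the statement is a routine coefficient extraction from the closed form in Lemma~\ref{L:s}, and the only point requiring a word of care is confirming that the binomial-coefficient convention absorbs the boundary cases so that the single formula covers every admissible $n$.
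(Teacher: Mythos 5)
Your proof is correct and takes essentially the same route as the paper's: both simply extract the coefficient of $z^n$ from the identity in Lemma~\ref{L:s} via the binomial theorem and reindex with $n=2g+t-1+i$. (One trivial slip in your parenthetical remark: the window in which the $t$-th term actually contributes is $\tfrac{n}{2}\le 2g+t-1\le n$, not $\tfrac{1}{2}(n+1)\le 2g+t-1\le n$ --- e.g.\ $n=2(2g+t-1)$ still contributes --- but this does not affect the argument, since the stated binomial convention absorbs all cases automatically.)
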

\begin{proof}
By Lemma~\ref{L:s}, we have
\[
\sum_{n=2g}^{6g-2} s_g(n) z^n =\sum_{t=1}^{g} \kappa_{g,t} z^{2g+t-1} (1+z)^{2g+t-1}
=\sum_{t=1}^{g} \sum_{i=0}^{2g+t-1}\kappa_{g,t} {2g+t-1 \choose i}z^{2g+t-1+i}.
\]
Set $n=2g+t-1+i$.
By comparing both sides of the above identity, we obtain the corresponding formula for
$s_g(n)$.
\end{proof}
\begin{corollary}\label{C:k}
The number $\kappa_{g,t}$ is a positive integer.
\end{corollary}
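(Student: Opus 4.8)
The plan is to exploit the combinatorial identities already established rather than to attack the closed formula for $\kappa_{g,t}$ directly. By definition $\kappa_{g,t} = |\cR_{g,t,0}|/2^{2g}$, so it suffices to show that $2^{2g}$ divides the integer $|\cR_{g,t,0}|$. The natural route is via Lemma \ref{L:shape}, which asserts a bijection $2^{2g}\cS_g(n)\simeq\cR_g(n)$, and the decomposition $\cR_g(n)=\biguplus_{t+k=n+1-2g}\cR_{g,t,k}$. First I would specialize $n=2g+t-1$, i.e.\ pick the term $k=0$ in this disjoint union. The point is that $\cR_{g,t,0}$ is not merely one block of the partition of $\cR_g(n)$; I would argue that the bijection of Lemma \ref{L:shape} restricts to a bijection onto $2^{2g}$ copies of the subset of $\cS_g(2g+t-1)$ consisting of those shapes whose underlying O-permutation (under the correspondence) has no $1$-cycles. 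Since the bijection preserves the underlying graph, and hence the cycle structure translating into vertex degrees, this restriction is legitimate. That gives $|\cR_{g,t,0}| = 2^{2g}\,|X_{g,t}|$ for an explicit set $X_{g,t}$ of unicellular maps, forcing $\kappa_{g,t}=|X_{g,t}|\in\mathbb{Z}_{\geq 0}$.

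The cleanest version, though, is probably to bypass even that refinement and use Lemma \ref{L:s} together with the previous corollary directly. From eq.\ (\ref{E:s}) we have
\[
s_g(n)=\sum_{t=1}^{g} \kappa_{g,t}\binom{2g+t-1}{\,n-(2g+t-1)\,}.
\]
Evaluating at $n=2g+t-1$ and running $t$ from small to large, the binomial coefficient $\binom{2g+t'-1}{n-(2g+t'-1)}$ vanishes unless $t'\leq t$ and equals $1$ when $t'=t$; for $t'<t$ it is $\binom{2g+t'-1}{\,t-t'\,}$, a nonnegative integer. Thus $s_g(2g+t-1)=\kappa_{g,t}+\sum_{t'<t}\kappa_{g,t'}\binom{2g+t'-1}{t-t'}$, and by induction on $t$ (base case $t=1$: $s_g(2g)=\kappa_{g,1}$, which is a count of maps hence a nonnegative integer) each $\kappa_{g,t}$ is an integer, being a difference of the integer $s_g(2g+t-1)$ and an integer combination of $\kappa_{g,1},\dots,\kappa_{g,t-1}$. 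Positivity then follows because $\kappa_{g,t}=|\cR_{g,t,0}|/2^{2g}$ with $\cR_{g,t,0}=\cE_0(2g+t-1)\times\cO_{g,t,0}$ nonempty: $\cE_0(2g+t-1)$ contains plane trees, and $\cO_{g,t,0}$ is nonempty since one can always build an O-permutation on $2g+t$ elements with $t$ odd cycles each of length $\geq 3$ summing to $2g+t$ (e.g.\ take $t-1$ cycles of length $3$ and one of length $2g-2(t-1)+3$, valid once $t\leq g$).

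I expect the main obstacle to be the integrality half rather than positivity: showing $2^{2g}\mid |\cR_{g,t,0}|$ requires a genuine structural input, namely that the $1$-to-$2^{2g}$ correspondence of Theorem \ref{T:main} (equivalently Lemma \ref{L:shape}) is uniform — it maps each shape to exactly $2^{2g}$ O-trees, with the $t=0$ cycle-type condition preserved on both sides. The inductive argument via (\ref{E:s}) sidesteps this by deriving integrality of $\kappa_{g,t}$ purely from integrality of the shape counts $s_g(n)$ and a unitriangular change of basis; the triangularity (the $\binom{2g+t-1}{n-(2g+t-1)}$ matrix is lower-triangular with unit diagonal when indexed appropriately by $t$) is the crucial observation that makes the induction go through, and I would state it carefully. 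Either route closes the proof; I would present the inductive one as primary and remark that integrality also follows transparently from the refined bijection.
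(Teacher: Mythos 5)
Your primary argument is exactly the paper's proof: evaluate eq.~(\ref{E:s}) at $n=2g+t-1$, note that the binomial matrix is unitriangular in $t$, and induct, with positivity immediate from the explicit formula for $\kappa_{g,t}=|\cR_{g,t,0}|/2^{2g}$ and the nonemptiness of $\cR_{g,t,0}=\cE_0(2g+t-1)\times\cO_{g,t,0}$. One caveat on your alternative sketch (which you rightly do not rely on): it is not justified that the $2^{2g}$ O-trees associated to a fixed shape all have the same number of $1$-cycles, since the bijection of Lemma~\ref{L:shape} preserves the underlying graph but not the cycle type --- a map-vertex of degree $d$ can arise from cycles of different lengths --- so the claimed restriction of the bijection onto $\cR_{g,t,0}$ would require a separate argument.
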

\begin{proof}
The positivity of $\kappa_{g,t}$ is clear by definition.
We proceed by induction on $t$: assume that $\kappa_{g,j}$ is an integer for
$j< t$ and set $n=2g+t-1$. By eq.~(\ref{E:s}), we have
\[
s_g(2g+t-1)=\sum_{j=1}^{t} \kappa_{g,j}{2g+j-1 \choose t-j },
\]
i.e.,
\[
\kappa_{g,t} =s_g(2g+t-1) -\sum_{j=1}^{t-1} \kappa_{g,j}{2g+j-1 \choose t-j }.
\]
Since $s_g(2g+t-1)$ and $\kappa_{g,j}$ are integers for $j< t$, $\kappa_{g,t}$ is an integer.
\end{proof}

\section{The coefficients $\kappa^\star_g(n)$}\label{S:kappa*}

Let $\epsilon_g(n)$ denote the number of unicellular maps of genus $g$ with $n$ edges.
In the following we derive an explicit formula for the generating function of
unicellular maps of genus $g$, which has the same coefficients $\kappa_{g,t}$
as in the generating polynomial of shapes of genus $g$ in Lemma~\ref{L:s}.
This result has been observed in \cite{Huang:14} by a different construction.
\begin{lemma}\label{L:c}
For any $g\geq 1$, the generating function of unicellular maps of genus $g$ is given by
\[
C_g(z)=\sum_{t=1}^{g} \frac{\kappa_{g,t} z^{2g+t-1}} {(1-4z)^{2g+t-\frac{1}{2}}}.
\]
\end{lemma}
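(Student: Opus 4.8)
The plan is to mimic the structure of the proof of Lemma~\ref{L:s}, but using the $1$-to-$2^{2g}$ correspondence of Theorem~\ref{T:main} between all unicellular maps and O-trees, rather than the restriction to shapes. First I would write $C_g(z)=\sum_{n\geq 2g}\epsilon_g(n)z^n$ and apply Theorem~\ref{T:main} in the form $2^{2g}\cE_g(n)\simeq \cT_g(n)=\cE_0(n)\times\cO_g(n+1)$, together with the decomposition $\cO_g(n+1)=\biguplus_{t+k=n+1-2g}\cO_{g,t,k}$ already recorded in the excerpt. The key structural point is that Lemma~\ref{L:shape2}, or rather its proof, only uses properties of R\'emy's bijection on the plane-tree part and the $1$-cycle structure of the permutation; it does \emph{not} use the degree-$\geq 3$ restriction except to guarantee non-leafness. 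So I expect one can either invoke an unrestricted analogue of Lemma~\ref{L:shape2} or, more cleanly, keep track of $|\cT_{g,t,k}|$ directly.

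Concretely, the main step is to establish the analogue of Lemma~\ref{L:shape2} in the unrestricted setting, namely a relation of the shape $|\cT_{g,t,k}| = \binom{?}{k}\,|\cT_{g,t,0}|$ for a suitable upper index. Here the obstacle — and the place where the computation genuinely differs from the shape case — is the count of permissible sectors (Proposition~\ref{prop:persec}): when a $1$-cycle vertex may be a leaf, R\'emy's bijection on a leaf contracts the edge to its father and produces a $+$-labelled sector, so the accounting of which sectors are permissible changes. I would recompute the number of permissible sectors of a $\cT_{g,t,k}$ O-tree as follows: such an O-tree has $2g+t+k-1$ edges, hence $2(2g+t+k-1)+1 = 4g+2t+2k-1$ sectors and $2g+t+k$ vertices; excluding the last sector of each vertex leaves $4g+2t+2k-1-(2g+t+k) = 2g+t+k-1$ sectors, and since a $1$-cycle vertex need no longer be excluded from having its first one or two sectors permissible (the leaf case uses its unique sector), I expect the correct count to be $2g+t+k-1$ minus a correction coming only from non-leaf $1$-cycles; tracking this correctly across the induction $k\to k-1$ will give $|\cT_{g,t,k}|=\binom{2(2g+t-1)+?}{k}|\cT_{g,t,0}|$. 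A cleaner route avoids permissible sectors entirely: use the exact cardinality $|\cT_{g,t,k}| = \mathrm{Cat}(2g+t+k-1)\,|\cO_{g,t,k}| = \mathrm{Cat}(2g+t+k-1)\binom{2g+t+k}{k}a_{g,t}$, which follows since $\cT_{g,t,k}=\cE_0(2g+t+k-1)\times\cO_{g,t,k}$ by definition and $|\cO_{g,t,k}|=\binom{2g+t+k}{k}a_{g,t}$ was computed in the excerpt.

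Given this, the proof becomes a generating-function manipulation. Writing $m=2g+t-1$, I have $2^{2g}\epsilon_g(n) = \sum_{t=1}^g |\cT_{g,t,\,n-2g-t+1}|$, so
\[
2^{2g}C_g(z) = \sum_{t=1}^{g} a_{g,t}\sum_{k\geq 0} \binom{m+k}{k}\mathrm{Cat}(m+k)\,z^{m+k}.
\]
Now I would use the classical identity $\sum_{j\geq 0}\binom{2j}{j}\frac{1}{j+1}\binom{j}{?}\cdots$ — more precisely the known expansion $\sum_{k\geq 0}\binom{m+k}{k}\mathrm{Cat}(m+k)z^{m+k} = \dfrac{z^m}{(1-4z)^{m+1/2}}$ (equivalently, the statement that $(1-4z)^{-m-1/2}$ is the generating function whose $z^k$-coefficient is $\binom{2m+2k}{m+k}\binom{m+k}{k}/$ appropriate normalization; this is exactly the $\mathrm{Cat}(m+k)\binom{m+k}{k}$ sequence). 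Substituting and recalling $\kappa_{g,t} = \dfrac{|\cR_{g,t,0}|}{2^{2g}}$ with the same $a_{g,t}$ through $|\cR_{g,t,0}| = \mathrm{Cat}(2g+t-1)\,a_{g,t} = \mathrm{Cat}(m)\,a_{g,t}$ and $\kappa_{g,t}=\mathrm{Cat}(m)a_{g,t}/2^{2g}$, dividing by $2^{2g}$ yields exactly $C_g(z)=\sum_{t=1}^g \dfrac{\kappa_{g,t}z^{2g+t-1}}{(1-4z)^{2g+t-1/2}}$. The one step I expect to be fiddly is pinning down the binomial identity $\sum_k \binom{m+k}{k}\mathrm{Cat}(m+k)z^{m+k} = z^m(1-4z)^{-(m+1/2)}$ with the correct exponent and normalization — this is where I would be most careful, checking it at $m=0$ (giving the Catalan generating function, but note the extra $\binom{k}{k}=1$ and the shift) and at small $m$; everything else is bookkeeping parallel to Lemma~\ref{L:s}.
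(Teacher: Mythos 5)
Your ``cleaner route'' is in substance exactly the paper's proof: the paper computes $2^{2g}\epsilon_g(n)=\Cat(n)\sum_{t=1}^g\binom{n+1}{2g+t}a_{g,t}$ from Theorem~\ref{T:main} and the decomposition $\cO_g(n+1)=\biguplus\cO_{g,t,k}$, then sums over $n$ using the identity $\sum_{n\geq r-1}\frac{(2n)!}{n!(n+1-r)!}z^n=\frac{(2(r-1))!}{(r-1)!}\frac{z^{r-1}}{(1-4z)^{r-1/2}}$ with $r=2g+t$, and finally recognizes $\kappa_{g,t}=\Cat(2g+t-1)a_{g,t}/2^{2g}$. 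Your opening detour through an unrestricted analogue of Lemma~\ref{L:shape2} is both unnecessary and genuinely delicate (a $1$-cycle vertex may now be a leaf, so R\'emy's move and the permissible-sector count change), and you are right to abandon it; but you should commit to the product-count route rather than leave both options open.

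The concrete problem is in the computation itself. First, since $|\cO_{g,t,k}|=\binom{2g+t+k}{k}a_{g,t}$ and $2g+t+k=n+1=m+k+1$ with $m=2g+t-1$, the inner sum must carry $\binom{m+k+1}{k}$, not $\binom{m+k}{k}$; you correctly record the cardinality and then drop the $+1$ when assembling the double sum. Second, the identity you invoke is false as stated: at $m=0$ your left side is $\sum_k\Cat(k)z^k=\frac{1-\sqrt{1-4z}}{2z}$ while your right side is $(1-4z)^{-1/2}$. The correct statement is
\[
\sum_{k\geq 0}\binom{m+k+1}{k}\Cat(m+k)\,z^{m+k}=\Cat(m)\,\frac{z^{m}}{(1-4z)^{m+\frac12}},
\]
which is the paper's identity divided by $(m+1)!$ and does check out at $m=0$ (both sides equal $(1-4z)^{-1/2}$). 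The missing factor $\Cat(m)$ on the right is not cosmetic: your final substitution uses $\kappa_{g,t}=\Cat(m)a_{g,t}/2^{2g}$, so without it you would obtain $\sum_t (a_{g,t}/2^{2g})\,z^{m}(1-4z)^{-(m+1/2)}$, which is off from the claimed formula by a factor of $\Cat(2g+t-1)$ in each term. With the two corrections the argument closes and coincides with the paper's proof.
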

\begin{proof}
Note that
$\cO_g(n+1) =\biguplus_{t=1}^{g} \cO_{g,t,k}$ and
$|\cO_{g,t,k}|= {n+1 \choose 2g+t} a_{g,t}$, where $k=n+1-2g-t$.
Thus $|\cO_g(n+1)|= \sum_{t=1}^{g} {n+1 \choose 2g+t} a_{g,t}$.
By Theorem~\ref{T:main}, $2^{2g}\cE_g(n)\simeq\cE_0(n)\times \cO_g(n+1)$ and
we have
\begin{eqnarray*}
\epsilon_g(n) &=& \frac{1}{2^{2g}} \Cat(n)
\sum_{t=1}^{g} {n+1 \choose 2g+t} a_{g,t}\\
&=&
\sum_{t=1}^{g} \frac{(2n)!}{2^{2g} n!(n+1-2g-t)!(2g+t)!}   a_{g,t}.
\end{eqnarray*}

Therefore
using
\[
  \sum_{n\geq r-1}  \frac{(2n)!}{ n!(n+1-r)!} z^n =
             \frac{(2(r-1))! }{(r-1)!} \frac{ z^{r-1}} {(1-4z)^{r-\frac{1}{2}}},
\]
we compute
\begin{eqnarray*}
C_g(z) &=&  \sum_{n\geq 2g} \epsilon_g(n) z^n\\
       &=&  \sum_{n\geq 2g} \sum_{t=1}^{g} \frac{(2n)!}{2^{2g} n!(n+1-2g-t)!(2g+t)!}   a_{g,t} z^n\\
       &=& \sum_{t=1}^{g}  \frac{a_{g,t}}{2^{2g}(2g+t)!}  \sum_{n\geq 2g}\frac{(2n)!}{ n!(n+1-2g-t)!}  z^n\\
       &=& \sum_{t=1}^{g}  \frac{a_{g,t}}{2^{2g}(2g+t)!} \cdot \frac{(2(2g+t-1))!}{(2g+t-1)!}\cdot \frac{z^{2g+t-1}}{(1-4z)^{2g+t-\frac{1}{2}}}\\
     &=& \sum_{t=1}^{g} \frac{\Cat(2g+t-1) a_{g,t}}{2^{2g}}\cdot  \frac{ z^{2g+t-1}} {(1-4z)^{2g+t-\frac{1}{2}}} \\
    &=& \sum_{t=1}^{g} \frac{\kappa_{g,t} z^{2g+t-1}} {(1-4z)^{2g+t-\frac{1}{2}}}.
\end{eqnarray*}
\end{proof}

Let
$$
K^{\star}_{g}(z)=\sum_{n=2g}^{3g-1}\kappa^{\star}_{g}(n) z^n,
$$
then \cite{Harer:86} shows that
\[
C_g(z) = \frac{1}{\sqrt{1-4z}} \ K^{\star}_{g}\Big( \frac{z }{1-4z}\Big).
\]
In view of Lemma~\ref{L:shape} and Lemma~\ref{L:c} this provides the following
combinatorial interpretation of $\kappa_g^\star(n)$:

\begin{theorem}\label{T:ck}
$\kappa^{\star}_{g}(n)= \kappa_{g,t}$, where $n=2g+t-1$ and $\kappa_g^\star(n)$ counts
the shapes of genus $g$, which correspond to $\cR_{g,t,0}\subset \cR_g(n)$ via
the bijection in Lemma~\ref{L:shape}.
\end{theorem}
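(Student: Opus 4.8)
The plan is to combine the explicit rational-function formula for $C_g(z)$ from Lemma~\ref{L:c} with the substitution identity of Harer--Zagier, and then simply read off the coefficients. First I would recall from Lemma~\ref{L:c} that
\[
C_g(z)=\sum_{t=1}^{g} \frac{\kappa_{g,t}\, z^{2g+t-1}}{(1-4z)^{2g+t-\frac12}}.
\]
On the other hand, Harer and Zagier's functional relation gives $C_g(z)=\frac{1}{\sqrt{1-4z}}\,K^{\star}_g\!\big(\frac{z}{1-4z}\big)$, and by definition $K^{\star}_g(w)=\sum_{n=2g}^{3g-1}\kappa^{\star}_g(n)\,w^n$. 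Substituting $w=\frac{z}{1-4z}$ and multiplying by $(1-4z)^{-1/2}$ yields
\[
C_g(z)=\sum_{n=2g}^{3g-1}\kappa^{\star}_g(n)\,\frac{z^{n}}{(1-4z)^{n+\frac12}}.
\]
Now both expressions for $C_g(z)$ are sums of terms of the shape $c_m\, z^{m}(1-4z)^{-m-1/2}$, and the index ranges match up: in the first sum, writing $m=2g+t-1$ with $t$ ranging over $1,\dots,g$ gives exactly $m\in\{2g,\dots,3g-1\}$, which is precisely the support of $K^\star_g$. So term by term I would match $m=n$ and conclude $\kappa^{\star}_g(n)=\kappa_{g,t}$ for $n=2g+t-1$.

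The one genuine point that needs care is the \emph{linear independence} of the functions $\{z^{m}(1-4z)^{-m-1/2}\}_{m\ge 0}$, so that matching two such expansions forces equality of coefficients. I would handle this by factoring out the common $(1-4z)^{-1/2}$ and observing that $z^m(1-4z)^{-m}=\big(\tfrac{z}{1-4z}\big)^m$; since $w=\tfrac{z}{1-4z}$ is an invertible formal power series substitution (it has zero constant term and nonzero linear coefficient), two power series in $w$ that become equal after substitution must have been equal to begin with. Concretely: from the two displays above, after dividing by $(1-4z)^{-1/2}$ we get $\sum_t \kappa_{g,t}\,w^{2g+t-1}=\sum_n \kappa^\star_g(n)\,w^n$ as identities of formal power series in $w$, and comparing coefficients of $w^n$ finishes it. This also re-derives, for free, the fact that $K^\star_g$ is supported on $\{2g,\dots,3g-1\}$, consistent with the stated definition.

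Finally, for the combinatorial content of the statement, I would invoke Lemma~\ref{L:shape} together with the bookkeeping already established in the Shapes section: $\kappa_{g,t}=|\cR_{g,t,0}|/2^{2g}$, and $\cR_{g,t,0}\subseteq\cR_g(n)$ for $n=2g+t-1$ corresponds under the $2^{2g}$-to-$1$ bijection of Lemma~\ref{L:shape} to the subset of $\cS_g(n)$ consisting of those shapes whose associated O-tree has no $1$-cycles; hence $\kappa^\star_g(n)=\kappa_{g,t}$ counts exactly that subclass of shapes. This last part is essentially a definition-unwinding, so the only mild obstacle in the whole argument is the formal-power-series substitution/linear-independence step, and even that is routine once one passes to the variable $w$. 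I expect no serious difficulty; the proof is short and amounts to putting Lemma~\ref{L:c} and the Harer--Zagier identity side by side.
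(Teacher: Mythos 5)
Your proposal is correct and follows essentially the same route as the paper: it combines the explicit expansion of $C_g(z)$ from Lemma~\ref{L:c} with the Harer--Zagier relation $C_g(z)=\frac{1}{\sqrt{1-4z}}K^{\star}_g\big(\frac{z}{1-4z}\big)$ and identifies coefficients, then unwinds the definition $\kappa_{g,t}=|\cR_{g,t,0}|/2^{2g}$ via Lemma~\ref{L:shape}. You merely make explicit the coefficient-matching step (invertibility of the substitution $w=\frac{z}{1-4z}$) that the paper leaves implicit.
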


In \cite{reidys:2013}, $C_g(z)$ has been shown to have the form
\[
 C_g(z)= \frac{P_g(z)}{(1-4z)^{3g-\frac{1}{2}}},
\]
where $P_g(z)$ is a polynomial with integer coefficients.

Combining this with Lemma~\ref{L:c}, we obtain an explicit formula for the
polynomials $P_g(z)$ in terms of $\kappa_{g,t}$:
\begin{corollary}
For any $g\geq 1$, the polynomial $P_g(z)$ is given by
\[
P_g(z)=\sum_{t=1}^{g} \kappa_{g,t} z^{2g+t-1} (1-4z)^{g-t}.
\]
\end{corollary}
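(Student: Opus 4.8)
The plan is to combine two facts stated earlier: Lemma~\ref{L:c}, which gives
\[
C_g(z)=\sum_{t=1}^{g} \frac{\kappa_{g,t}\, z^{2g+t-1}}{(1-4z)^{2g+t-\frac12}},
\]
and the result of \cite{reidys:2013} quoted above, that $C_g(z)=P_g(z)/(1-4z)^{3g-\frac12}$ for a (uniquely determined) polynomial $P_g(z)$. Equating these two expressions and clearing the common denominator $(1-4z)^{3g-\frac12}$ isolates $P_g(z)$ directly.

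Concretely, I would first multiply the expression from Lemma~\ref{L:c} by $(1-4z)^{3g-\frac12}$, obtaining
\[
P_g(z)=(1-4z)^{3g-\frac12}\sum_{t=1}^{g}\frac{\kappa_{g,t}\, z^{2g+t-1}}{(1-4z)^{2g+t-\frac12}}
=\sum_{t=1}^{g}\kappa_{g,t}\, z^{2g+t-1}\,(1-4z)^{(3g-\frac12)-(2g+t-\frac12)}.
\]
The exponent of $(1-4z)$ in the $t$-th summand simplifies to $(3g-\tfrac12)-(2g+t-\tfrac12)=g-t$, which for $1\le t\le g$ is a nonnegative integer, so each summand $\kappa_{g,t}\,z^{2g+t-1}(1-4z)^{g-t}$ is genuinely a polynomial; summing gives exactly the claimed formula.

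The only subtlety worth a sentence is well-definedness: one should note that $P_g(z)$ as defined in \cite{reidys:2013} is unique, since if $P_g(z)/(1-4z)^{3g-\frac12}=\tilde P_g(z)/(1-4z)^{3g-\frac12}$ as formal power series (or as functions near $0$) then $P_g=\tilde P_g$ as polynomials. Hence the polynomial produced by the computation above must coincide with the $P_g(z)$ of \cite{reidys:2013}, and the fact that our expression is manifestly a polynomial with integer coefficients (each $\kappa_{g,t}$ is a positive integer by Corollary~\ref{C:k}, and $(1-4z)^{g-t}$ has integer coefficients) is consistent with the integrality asserted there. There is essentially no obstacle here; the proof is a one-line denominator manipulation once Lemma~\ref{L:c} and the form from \cite{reidys:2013} are in hand, and the main thing to get right is simply the bookkeeping of the exponent $g-t$.
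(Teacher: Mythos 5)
Your proposal is correct and follows exactly the route the paper intends: it simply combines Lemma~\ref{L:c} with the form $C_g(z)=P_g(z)/(1-4z)^{3g-\frac12}$ from \cite{reidys:2013}, clears the denominator, and checks that the exponent $g-t$ is a nonnegative integer. The paper gives no further argument beyond this combination, so your write-up (including the brief uniqueness remark) matches and slightly elaborates the intended proof.
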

Combining Lemma~\ref{L:s} and Lemma~\ref{L:c}, we also derive the following functional
relation between $C_g(z)$ and $S_g(z)$
\begin{corollary}
For $g \geq 1$, we have
\begin{equation}
C_g(z)= \frac{1+z C_0(z)^2}{1-z C_0(z)^2}
S_g\left(\frac{z C_0(z)^2}{1-z C_0(z)^2}\right),
\end{equation}
where the generating function $C_0(z)$ of plane trees with $n$ edges
is given by $C_0(z)=\sum_n \epsilon_0(n) z^n = \frac{1-\sqrt{1-4z}}{2z}$
\end{corollary}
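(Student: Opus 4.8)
The plan is to combine the two series expressions for $C_g(z)$ and $S_g(z)$ obtained in Lemma~\ref{L:c} and Lemma~\ref{L:s} via a single change of variable, and then recognize the substituted argument in terms of $C_0(z)$. First I would introduce the abbreviation $w = \frac{z}{1-4z}$, since Lemma~\ref{L:c} can be rewritten as
\[
C_g(z) = \frac{1}{\sqrt{1-4z}} \sum_{t=1}^g \kappa_{g,t}\, w^{2g+t-1},
\]
because $\frac{z^{2g+t-1}}{(1-4z)^{2g+t-\frac12}} = \frac{1}{\sqrt{1-4z}}\left(\frac{z}{1-4z}\right)^{2g+t-1}$. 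On the other hand, Lemma~\ref{L:s} gives $S_g(u) = \sum_{t=1}^g \kappa_{g,t}\, u^{2g+t-1}(1+u)^{2g+t-1} = \sum_{t=1}^g \kappa_{g,t}\, \bigl(u(1+u)\bigr)^{2g+t-1}$. So if I can choose $u$ so that $u(1+u) = w$, then $S_g(u) = \sum_{t=1}^g \kappa_{g,t} w^{2g+t-1}$, and hence $C_g(z) = \frac{1}{\sqrt{1-4z}}\, S_g(u)$. The remaining work is entirely a matter of identifying $u$, $1+u$, and $\frac{1}{\sqrt{1-4z}}$ explicitly in terms of $C_0(z)$.

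Next I would carry out the identification. Recall $C_0(z) = \frac{1-\sqrt{1-4z}}{2z}$, which satisfies $C_0(z) = 1 + zC_0(z)^2$ and also $\sqrt{1-4z} = 1 - 2zC_0(z)$, so $\frac{1}{\sqrt{1-4z}} = \frac{1}{1-2zC_0(z)}$. A short computation should show $zC_0(z)^2 = C_0(z) - 1$, and from there that $\frac{zC_0(z)^2}{1-zC_0(z)^2}$ is exactly the candidate for $u$: one checks $u(1+u) = \frac{zC_0(z)^2}{1-zC_0(z)^2}\cdot\frac{1}{1-zC_0(z)^2}$ and verifies this equals $w = \frac{z}{1-4z}$ by substituting $\sqrt{1-4z} = 1-2zC_0(z)$ and simplifying (both numerator and denominator become explicit rational functions of $z$). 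Similarly $\frac{1+zC_0(z)^2}{1-zC_0(z)^2} = 1 + u$, and one verifies $\frac{1+zC_0(z)^2}{1-zC_0(z)^2} = \frac{1}{\sqrt{1-4z}}$ again using $\sqrt{1-4z}=1-2zC_0(z)$ and $zC_0(z)^2 = C_0(z)-1$. Assembling these, $C_g(z) = \frac{1}{\sqrt{1-4z}}\,S_g(u) = \frac{1+zC_0(z)^2}{1-zC_0(z)^2}\, S_g\!\left(\frac{zC_0(z)^2}{1-zC_0(z)^2}\right)$, which is the claimed identity.

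The only genuinely delicate point is that $S_g$ is a \emph{polynomial}, so the substitution $S_g(u)$ with $u = \frac{zC_0(z)^2}{1-zC_0(z)^2}$ a formal power series in $z$ with zero constant term is well-defined as a power series, and the manipulation $S_g(u) = \sum_t \kappa_{g,t}(u(1+u))^{2g+t-1}$ is a legitimate identity of power series — this is automatic since $u(1+u)$ also has zero constant term and everything converges $z$-adically. I expect the main obstacle to be purely bookkeeping: making sure every algebraic identity among $C_0$, $\sqrt{1-4z}$, $z$, and the rational expressions in $zC_0^2$ is verified consistently (there are several equivalent forms of each), rather than any conceptual difficulty. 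A clean way to organize this is to express everything in terms of the single quantity $y := \sqrt{1-4z}$, noting $z = \frac{1-y^2}{4}$, $zC_0(z)^2 = \frac{1-y}{1+y}$, so that $\frac{zC_0^2}{1-zC_0^2} = \frac{1-y}{2y}$ and $\frac{1+zC_0^2}{1-zC_0^2} = \frac1y$; then $u(1+u) = \frac{1-y}{2y}\cdot\frac{1+y}{2y} = \frac{1-y^2}{4y^2} = \frac{z}{y^2} = \frac{z}{1-4z} = w$, and the identity drops out immediately.
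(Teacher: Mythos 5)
Your proposal is correct and follows essentially the same route as the paper, which likewise obtains the identity by combining Lemma~\ref{L:s} and Lemma~\ref{L:c} through the substitution $u(1+u)=\frac{z}{1-4z}$ with $u=\frac{zC_0(z)^2}{1-zC_0(z)^2}$ and the prefactor $\frac{1+zC_0(z)^2}{1-zC_0(z)^2}=\frac{1}{\sqrt{1-4z}}$. One small slip: your parenthetical claim that $\frac{1+zC_0(z)^2}{1-zC_0(z)^2}=1+u$ is false (in fact $1+u=\frac{1}{1-zC_0(z)^2}=\frac{1+y}{2y}$, not $\frac{1}{y}$), but this is never used in your final assembly, which correctly relies only on $u(1+u)=w$ and $\frac{1+zC_0^2}{1-zC_0^2}=\frac{1}{\sqrt{1-4z}}$.
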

This functional relation can also be derived via symbolic methods \cite{Flajolet:07a}.
More precisely, we can construct a general unicellular map from a shape
by first replacing each edge by a path and then attaching a plane tree
to each sector.

We shall proceed by giving a bijective proof of a recurrence of $a_{g,t}$.

\begin{proposition}
For any $1\leq t\leq g$,
there exists a bijection
\[
\cO_{g,t,0} \simeq  (2g+t-1) (2g+t-2) (\cO_{g-1,t,0} + \cO_{g-1,t-1,0} ).
\]
Therefore $a_{g,t}$ satisfies the recurrence
\begin{equation}\label{E:arec}
a_{g,t}=(2g+t-1) (2g+t-2) (a_{g-1,t}+a_{g-1,t-1}),
\end{equation}
with $a_{1,1}=2$ and $a_{g,t}=0$ if $t<1$ or $t> g$. The values
$a_{g,t}$ for $g\leq 5$ are given in Table~\ref{Tab:a}.
\end{proposition}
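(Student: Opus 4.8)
The plan is to construct the bijection directly on O-permutations without cycles of length $1$, since by the preceding lemma $\cO_{g,t,0}$ parametrizes exactly such permutations on $2g+t$ elements. The target set counts, up to the scalar factor $(2g+t-1)(2g+t-2)$, pairs consisting of an O-permutation of genus $g-1$ on $2g+t-2 = 2(g-1)+t$ elements with $t$ odd cycles of length $>1$ (the $\cO_{g-1,t,0}$ summand) or genus $g-1$ on $2g+t-3 = 2(g-1)+(t-1)+1$ elements arising from $\cO_{g-1,t-1,0}$, noting $2(g-1)+(t-1) = 2g+t-3$; in other words, these are obtained from an O-permutation on $2g+t-2$ elements having $t-1$ odd cycles of length $>1$ by the inclusion $\cO_{g-1,t-1,0} \hookrightarrow \cO_{g-1,t-1,1}$ used earlier, but here more naturally we think of the two summands as "the odd cycle containing the largest element has length $\geq 5$" versus "it has length exactly $3$". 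So first I would split $\cO_{g,t,0}$ according to the length of the cycle containing the element $2g+t$.

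The key step is then a Rémy-type reduction at the level of cycles, analogous to the manipulations in Lemma~\ref{L:shape1}: given $\sigma \in \cO_{g,t,0}$, locate the cycle $c$ containing $2g+t$, together with the two elements $\sigma^{-1}(2g+t)$ and $\sigma^{-1}(\sigma^{-1}(2g+t))$ (the two predecessors of $2g+t$ around $c$). Removing $2g+t$ from the cycle — i.e., replacing the two arcs $a \to b \to 2g+t \to d$ by $a \to b \to d$ — shortens $c$ by one, but this breaks the odd-length condition, so instead I would remove $2g+t$ together with one of its neighbours, reducing $|c|$ by $2$ and keeping it odd (or deleting $c$ entirely if $|c|=3$, which is precisely the case landing in the $\cO_{g-1,t-1,0}$ summand). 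The discarded neighbour and the choice of which way to reattach are exactly the data recorded by the two free elements among the $2g+t-1$ (resp. $2g+t-2$) remaining labels, which is where the factor $(2g+t-1)(2g+t-2)$ comes from; after performing the deletion one relabels the surviving $2g+t-2$ elements order-preservingly to $\{1,\dots,2g+t-2\}$. One checks the genus drops by exactly $1$: either the number of elements drops by $2$ and the number of cycles is unchanged, or it drops by $3$ and the number of cycles drops by $1$, and in both cases $(n-\ell)/2$ decreases by $1$.

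The inverse is the insertion operation: given a target O-permutation on $2g+t-2$ elements and the two marked labels $i,j \in \{1,\dots,2g+t-1\}$, relabel to make room, then insert the pair $\{2g+t,\,x\}$ (where $x$ is determined by one marked label, its position by the other) into the cycle through $x$, or create a new $3$-cycle in the second summand; one must verify this always produces a cycle of odd length $>1$ and never a fixed point, so that the result genuinely lands in $\cO_{g,t,0}$. The main obstacle I anticipate is bookkeeping the relabelling bijection between $\{1,\dots,2g+t\}\setminus\{2g+t, x\}$ and $\{1,\dots,2g+t-2\}$ consistently in both directions and confirming that the "two marked labels" encode precisely the $(2g+t-1)(2g+t-2)$ ways without over- or under-counting — in particular handling the boundary case $|c|=3$ so that it matches the $\cO_{g-1,t-1,0}$ term cleanly, and checking the degenerate small cases ($g=1$, $t=1$) against $a_{1,1}=2$. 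Once the bijection is established, eq.~(\ref{E:arec}) follows by taking cardinalities, and the stated initial/boundary conditions are immediate from the definition of $a_{g,t}$.
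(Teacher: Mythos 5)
Your proposal is correct and follows essentially the same route as the paper's proof: split $\cO_{g,t,0}$ according to whether the cycle containing the largest element $n=2g+t$ has length $3$ or length $>3$, then either delete that $3$-cycle (landing in $\cO_{g-1,t-1,0}$) or remove $n$ together with a neighbouring element (landing in $\cO_{g-1,t,0}$), with the factor $(2g+t-1)(2g+t-2)$ recording the value of the removed companion element and its reinsertion position. The bookkeeping you flag as a potential obstacle is exactly what the paper carries out, and it goes through without difficulty.
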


\begin{table}
\begin{center}
\begin{tabular}{c|ccccc}
\small$a_{g,t}$ & \small$t=1$       & \small$2$         & \small$3$         & \small$4$         & \small $5$ \\
\hline
\small $g=1$    & \small$2$         & \small            & \small            & \small            & \small  \\
\small $2$      & \small$24$        & \small $40$       & \small            & \small            & \small  \\
\small $3$      & \small$720$       & \small $2688$     & \small$2240$      & \small            & \small  \\
\small $4$      & \small$40320$     & \small $245376$   & \small$443520$    & \small  $246400$  & \small  \\
\small $5$      & \small$3628800$   & \small $31426560$ & \small$90934272$  & \small  $107627520$ & \small $44844800$
\end{tabular}
\end{center}
\caption{\small $a_{g,t}$ of O-permutations of genus $g$
on \hbox{$2g+t$} elements having no cycles of
length $1$ and $t$ cycles of length $>1$.}
\label{Tab:a}
\end{table}

\begin{proof}
Set $n=2g+t$. Let $\mathcal{F}$ and $\mathcal{G}$ denote the subsets of $\cO_{g,t,0}$
where the cycle containing element $n$ has length $3$ and greater than $3$, respectively.
For any $\sigma \in \cO_{g,t,0}$, we have two scenarios
\begin{enumerate}
\item if $\sigma \in\mathcal{G}$, assume the cycle $c$ containing $n$ is
of the form $(h,\ldots,i,j,n)$. Removing $j$ and $n$ from $c$, we obtain an O-permutation
with $2g+t-2$ elements and $t$ cycles, which, after natural relabeling,
 corresponds to an O-permutation $\sigma'$ contained in $\cO_{g-1,t,0}$.
There are $2g+t-1$ ways to choose $j$ and $2g+t-2$ ways to insert $j$ and $n$.
Thus $\mathcal{G}$ is in bijection with $ (2g+t-1) (2g+t-2)\cO_{g-1,t,0}$,

\item if $\sigma \in\mathcal{F}$, then the cycle $c$ containing $n$ is
of the form $(i,j,n)$. By deleting $c$ from $\sigma$, we obtain an O-permutation
with $2g+t-3$ elements with $t-1$ cycles, which, after natural relabeling,
 corresponds to an O-permutation $\sigma'$ contained in $\cO_{g-1,t-1,0}$.
The number of ways to choose $i,j$ is $(2g+t-1) (2g+t-2)$, whence $\mathcal{F}$
is in bijection with $ (2g+t-1) (2g+t-2)\cO_{g-1,t-1,0} $.
\end{enumerate}
Since $\cO_{g,t,0}=\mathcal{G}\uplus\mathcal{F} $, we have a bijection
\[
\beta\colon\cO_{g,t,0} \to (2g+t-1) (2g+t-2) (\cO_{g-1,t,0} + \cO_{g-1,t-1,0} )
\]
and eq.~(\ref{E:arec}) follows immediately.
\end{proof}

{\bf Remark:} since $\cO_{g,t,0}$-elements can viewed as sets of cycles of
     odd lengths $>1$, we can derive via symbolic methods~\cite{Flajolet:07a}
$$
1+\sum_{g\geq 1} \sum_{t=1}^g\frac{1}{(2g+t)!} a_{g,t} y^{2g+t}x^{t}=
\Big(\frac{1+y}{1-y}\Big)^{\frac{1}{2}x} \exp(-x y).
$$

We proceed by deriving a recurrence for $\kappa_{g,t}$, which is analogous to
the proof for Harer-Zagier recurrence~(\ref{E:bb}) in~\cite{Chapuy:13}.

\begin{theorem}\label{T:kappa}
For any $1\leq t\leq g$, there exists a bijection
\[
n \cR_{g,t,0} \simeq 2 (2n-3)\cdot 2(2n-5) \Big( (n-2)  \cR_{g-1,t,0} +2(2n-7) \cR_{g-1,t-1,0}\Big),
\]
where $n=2g+t$. Therefore $\kappa_{g,t}$ satisfies the recurrence
\begin{equation}\label{E:krec}
(2g+t )\kappa_{g,t}=(2(2g+t)-3) (2(2g+t)-5) \big( (2g+t -2) \kappa_{g-1,t}+2(2(2g+t)-7)\kappa_{g-1,t-1}\big),
\end{equation}
where $\kappa_{1,1}=1$ and $\kappa_{g,t}=0$ if $t<1$ or $t> g$, see Table~\ref{Tab:k}.
\end{theorem}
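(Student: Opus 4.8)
The plan is to establish the bijection by combining R\'{e}my's bijection (applied to the plane tree part) with the recurrence $a_{g,t}=(2g+t-1)(2g+t-2)(a_{g-1,t}+a_{g-1,t-1})$ already proved above, since $|\cR_{g,t,0}| = \Cat(2g+t-1)\,a_{g,t}$ factors neatly into a Catalan part and an O-permutation part. First I would recall from the lemma that $\cR_{g,t,0}=\cE_0(2g+t-1)\times\cO_{g,t,0}$, so an element is a pair $(T,\sigma)$ with $T$ a plane tree on $n-1=2g+t-1$ edges and $\sigma\in\cO_{g,t,0}$. The factor $n$ on the left and the factors $2(2n-3)$, $2(2n-5)$, and $(n-2)$ or $2(2n-7)$ on the right should be split between the two coordinates: the R\'{e}my-type factors of the form $2(2m-1)$ will come from repeatedly reducing the number of edges of $T$ (each step $(m+1)\cE_0(m)\simeq 2(2m-1)\cE_0(m-1)$), and the remaining factors $(n-2)$, $(n-1)(n-2)$, etc., will come from the $a_{g,t}$ recurrence applied to $\sigma$.

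Concretely, I would argue as follows. On the left, $n\,\cR_{g,t,0}=n\,\cE_0(n-1)\times\cO_{g,t,0}$. Apply R\'{e}my's bijection to the plane tree factor: $n\,\cE_0(n-1)\simeq 2(2n-3)\,\cE_0(n-2)$, then again $(n-2)\cE_0(n-2)$-decorated objects reduce via $2(2n-5)\,\cE_0(n-3)$; one must be careful about which of the numerical factors is "used up" as the $(m+1)$ prefactor and which survives. The target decomposition suggests we want to reach $\cE_0(n-3)=\cE_0(2g+t-3)$ paired with $\cO_{g-1,t,0}$ in the first summand, and $\cE_0(2g+t-4)$ paired with $\cO_{g-1,t-1,0}$ in the second — matching the edge counts $2(g-1)+t-1=2g+t-3$ and $2(g-1)+(t-1)-1=2g+t-4$ respectively. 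So after extracting the Catalan/R\'{e}my factors to descend from $n-1$ edges to $2g+t-3$ edges (contributing $2(2n-3)\cdot 2(2n-5)$ with one leftover linear factor), I would invoke the bijection $\beta\colon\cO_{g,t,0}\to(2g+t-1)(2g+t-2)(\cO_{g-1,t,0}\uplus\cO_{g-1,t-1,0})$ from the previous proposition, and in the $\cO_{g-1,t-1,0}$ branch perform one further R\'{e}my reduction on the tree to drop from $2g+t-3$ to $2g+t-4$ edges, producing the extra $2(2n-7)$. Collecting the surviving linear factors and checking they equal $(n-2)$ in the first branch and $2(2n-7)$ in the second is the bookkeeping core of the argument. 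The recurrence $(2g+t)\kappa_{g,t}=(2(2g+t)-3)(2(2g+t)-5)\big((2g+t-2)\kappa_{g-1,t}+2(2(2g+t)-7)\kappa_{g-1,t-1}\big)$ then follows by taking cardinalities and dividing by $2^{2g}=4\cdot 2^{2(g-1)}$, with the factor $4$ absorbed into two of the R\'{e}my $2(\cdot)$ prefactors; the base case $\kappa_{1,1}=|\cR_{1,1,0}|/4=\Cat(2)\cdot a_{1,1}/4=2\cdot 2/4=1$ is immediate, and $\kappa_{g,t}=0$ outside $1\le t\le g$ is inherited from $a_{g,t}$.

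The main obstacle I anticipate is the precise interleaving of the R\'{e}my reductions on the plane tree with the cycle-surgery bijection $\beta$ on the O-permutation, so that (i) the two operations act on genuinely independent coordinates and hence commute, (ii) the R\'{e}my labels (the $\pm$-signed sectors) get "spent" correctly — i.e. each surviving $2(2m-1)$ factor is honestly a choice, not double-counted — and (iii) the leftover numerical factors after all reductions match $(n-2)$ and $2(2n-7)$ on the nose rather than up to an off-by-one. A secondary subtlety is that R\'{e}my's bijection as stated produces a labeled \emph{vertex} or labeled \emph{sector}, so one has to track whether the intermediate objects carry a distinguished vertex/sector and ensure these bookkeeping labels are consumed exactly by the numerical prefactors on the left — reusing the device from Lemma~\ref{L:shape1} and Lemma~\ref{L:shape2} where permissible sectors are traded for $1$-cycle vertices. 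Once the factor accounting is pinned down, the rest is the mechanical translation from the set bijection to eq.~(\ref{E:krec}) via $|\cR_{g,t,0}|=2^{2g}\kappa_{g,t}$.
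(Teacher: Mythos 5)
Your proposal is essentially correct, but it takes a genuinely different route from the paper. The paper works with a single \emph{coupled} surgery: it marks one of the $n$ vertices of the O-tree (giving $\cR^\star_{g,t,0}\simeq n\,\cR_{g,t,0}$), splits into cases according to whether the cycle of $\sigma$ containing the marked vertex has length $3$ or length $>3$, and then applies R\'{e}my's bijection two or three times \emph{at precisely the vertices being deleted from that cycle}, so that the tree and the permutation shrink in lockstep; the factors $2(2n-3)\cdot 2(2n-5)\cdot(n-2)$ and $2(2n-3)\cdot 2(2n-5)\cdot 2(2n-7)$ arise as the R\'{e}my sector labels together with the reinsertion position. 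You instead exploit the fact (proved in the paper's unnumbered lemma) that $\cR_{g,t,0}$ is the \emph{full} Cartesian product $\cE_0(2g+t-1)\times\cO_{g,t,0}$, and compose independent bijections on the two factors: iterated R\'{e}my reductions $(m+1)\cE_0(m)\simeq 2(2m-1)\cE_0(m-1)$ on the tree coordinate, and the bijection $\beta\colon\cO_{g,t,0}\simeq(n-1)(n-2)(\cO_{g-1,t,0}\uplus\cO_{g-1,t-1,0})$ on the permutation coordinate, shuffling the numerical multiplicities between the factors. This does close up: $n\,\cE_0(n-1)\simeq 2(2n-3)\,\cE_0(n-2)$, then the factor $n-1$ supplied by $\beta$ feeds $(n-1)\cE_0(n-2)\simeq 2(2n-5)\cE_0(n-3)$, leaving $(n-2)$ in the $\cO_{g-1,t,0}$ branch and, after one more reduction $(n-2)\cE_0(n-3)\simeq 2(2n-7)\cE_0(n-4)$, the factor $2(2n-7)$ in the $\cO_{g-1,t-1,0}$ branch; the edge counts $n-3$ and $n-4$ match $\cR_{g-1,t,0}$ and $\cR_{g-1,t-1,0}$. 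Your approach is more mechanical and relies essentially on the product structure of $\cR_{g,t,0}$; the paper's coupled version is structurally richer (it parallels the Chapuy--F\'{e}ray--Fusy proof of the Harer--Zagier recurrence and would survive without the product decomposition), but both establish the stated bijection and hence eq.~(\ref{E:krec}).

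One small slip to fix when you write this up: your second R\'{e}my step is stated as reducing ``$(n-2)\cE_0(n-2)$-decorated objects'', but the prefactor consumed at that step must be $n-1$ (since $(m+1)=n-1$ for $m=n-2$); the $(n-1)$ comes from $\beta$, and it is the factor $(n-2)$ from $\beta$ that survives into the first branch. Your own final accounting states the correct surviving factors, so this is only an inconsistency in the intermediate sentence, not in the plan.
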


\begin{table}
\begin{center}
\begin{tabular}{c|ccccc}
\small$\kappa_{g,t}$    & \small$t=1$       & \small$2$         & \small$3$         & \small$4$         & \small $5$ \\
\hline
\small $g=1$            & \small$1$         & \small            & \small            & \small            & \small  \\
\small $2$              & \small$21$        & \small $105$      & \small            & \small            & \small  \\
\small $3$              & \small$1485$      & \small $18018$    & \small$50050$     & \small            & \small  \\
\small $4$              & \small$225225$    & \small $4660227$  & \small$29099070$  & \small $56581525$ & \small  \\
\small $5$              & \small$59520825$  & \small$1804142340$& \small$18472089636$  & \small  $78082504500$ & \small $117123756750$
\end{tabular}
\end{center}
\caption{\small The numbers $\kappa_{g,t}$.}
\label{Tab:k}
\end{table}

\begin{proof}
Let $\cR^\star_{g,t,0}$ denote the set of $\cR_{g,t,0}$ O-trees whith a labeled vertex, $v$.
Let $\mathcal{J}$ and $\mathcal{K}$ denote the subsets $\cR^\star_{g,t,0}$ where the cycle
containing the labeled vertex has length $3$ and length greater than $3$, respectively.

For any $(T,\sigma,v) \in \cR^\star_{g,t,0}$, we have two scenarios
\begin{enumerate}
\item if $(T,\sigma,v)  \in\mathcal{K}$, then the cycle $c$ containing $v$ is of the form
$(v',\ldots,v_1,v_2,v)$. Applying  R\'{e}my's bijection twice to $T$
with respect to $v$ and $v_2$, we obtain the O-tree $(T',\sigma')$ where $T'$ has $n-2$ vertices
and $\sigma'$ is $\sigma$-induced by removing $v$ and $v_2$ from $c$ and subsequent relabeling
$\sigma$ according to $T'$.

The number of possible positions $v_1$ where we can insert $v_2$ and $v$ back is $n-2$, whence
$\mathcal{K} $ is in bijection with $2 (2n-3)\cdot 2(2n-5)\cdot (n-2) \cR_{g-1,t,0}$,

\item if $(T,\sigma,v)  \in\mathcal{J}$, then the cycle $c$ containing $v$ is
of the form $(v_1,v_2,v)$. Applying  R\'{e}my's bijection three times to $T$
with respect to $v$, $v_1$ and $v_2$, we obtain the O-tree $(T',\sigma')$ where
$T'$ has $n-3$ vertices and the O-permutation $\sigma'$ is induced by $\sigma$
by deleting $c=(v_1,v_2,v)$ and relabeling $\sigma$ according to $T'$. Therefore
$\mathcal{J}$ is in bijection with $2 (2n-3)\cdot 2(2n-5)\cdot 2(2n-7) \cR_{g-1,t-1,0}$.
\end{enumerate}

Since $\cR^\star_{g,t,0} \simeq n \cR_{g,t,0}$ and $\cR^\star_{g,t,0}=\mathcal{K}+\mathcal{J} $, we have
a
bijection
\[
n \cR_{g,t,0}\simeq 2 (2n-3)\cdot 2(2n-5)\cdot (n-2) \cR_{g-1,t,0}+ 2 (2n-3)\cdot 2(2n-5)
                                                                     \cdot 2(2n-7) \cR_{g-1,t-1,0},
\]
for any $1\leq t\leq g$.

Since $|\cR_{g,t,0}| =2^{2g} \kappa_{g,t}$, it is clear that this bijection implies eq.~(\ref{E:krec}).
\end{proof}

{\bf Remark:}
$$
1+2\sum_{g\geq 1} \sum_{t=1}^g \frac{\kappa_{g,t}}{(2(2g+t)-3)!!}y^{2g+t}x^{t}=
\Big(\frac{1+y}{1-y}\Big)^{x} \exp(-2x y),
$$
which implies eq.~(\ref{E:krec}).

We next turn to log-concavity of $\{\kappa_{g,t}\}_{t=0}^g$.

\begin{definition}
A sequence $\{a_i\}_{i=0}^n$ of nonnegative real numbers is said to be \emph{unimodal} if
there exists an index $0\leq m \leq n$, called the \emph{mode} of the sequence, such that
$a_0\leq a_1 \leq \cdots  \leq a_{m-1}\leq a_m \geq a_{m+1}\geq \cdots \geq a_n$. The sequence
is said to be \emph{logarithmically concave} (or \emph{log-concave} for short) if
$$
a_i^2\geq a_{i-1}a_{i+1}, \qquad  1\leq i \leq n-1.
$$
\end{definition}
Clearly, log-concavity of a sequence with \emph{positive} terms implies unimodality. Let
us say that the sequence $\{a_i\}_{i=0}^n$ has \emph{no internal zeros} if there do not
exist integers $0\leq i < j < k \leq n$ satisfying $a_i\neq 0 $, $a_j=0$, $a_k\neq 0$.
Then, in fact, a \emph{nonnegative} log-concave sequence with no internal zeros is unimodal.
We call a polynomial $f(x)=\sum_{i=0}^n a_i x^i$ is unimodal and log-concave if the sequence
$\{a_i\}_{i=0}^n$ of its coefficients is unimodal and log-concave, respectively.

\begin{lemma}\label{L:log}
Assume that the number $b_{g,t}$ satisfies the recurrence
$b_{g,t}= p_{g,t} b_{g-1,t}+q_{g,t} b_{g-1,t-1},$
for all $g\geq 1$, where $b_{g,t}$, $p_{g,t}$, $q_{g,t}$ are all nonnegative. If
\begin{itemize}
\item $\{b_{1,t}\}_t$ is log-concave,
\item $\{p_{g,t}\}_t$ and $\{q_{g,t}\}_t$ are log-concave for any $g\geq 1$,
\item $p_{g,t-1} q_{g,t+1}+ p_{g,t+1} q_{g,t-1} \leq 2 p_{g,t} q_{g,t} $ for any $g\geq 1$,
\end{itemize}
then $\{b_{g,t}\}_t$ is log-concave for any $g\geq 1$.
\end{lemma}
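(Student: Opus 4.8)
The plan is to prove the claim by a direct induction on $g$, with the base case $g=1$ handled by the first hypothesis, and for each fixed $g\geq 2$ an inner induction (or rather a direct manipulation) on $t$ using the recurrence $b_{g,t}=p_{g,t}b_{g-1,t}+q_{g,t}b_{g-1,t-1}$. The key reduction is to observe that log-concavity is preserved under the operations that build $\{b_{g,t}\}_t$ from $\{b_{g-1,t}\}_t$: namely, multiplying a log-concave sequence termwise by another log-concave sequence yields a log-concave sequence, and the shift $t\mapsto t-1$ preserves log-concavity trivially. So the two summands $\{p_{g,t}b_{g-1,t}\}_t$ and $\{q_{g,t}b_{g-1,t-1}\}_t$ are each log-concave, given that $\{b_{g-1,t}\}_t$ is log-concave (inductive hypothesis) and $\{p_{g,t}\}_t$, $\{q_{g,t}\}_t$ are log-concave (second hypothesis). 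The remaining issue is that the sum of two log-concave sequences need not be log-concave in general — this is exactly where the third hypothesis, a cross-term inequality, must enter.

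First I would record and prove the elementary lemma that the Hadamard (termwise) product of two nonnegative log-concave sequences is log-concave: if $a_i^2\geq a_{i-1}a_{i+1}$ and $c_i^2\geq c_{i-1}c_{i+1}$ then $(a_ic_i)^2\geq (a_{i-1}c_{i-1})(a_{i+1}c_{i+1})$, immediate by multiplying the two inequalities. Next I would set, for the fixed $g$ under consideration, $u_t=p_{g,t}b_{g-1,t}$ and $v_t=q_{g,t}b_{g-1,t-1}$, so that $b_{g,t}=u_t+v_t$, and both $\{u_t\}_t$ and $\{v_t\}_t$ are log-concave by the above. The goal becomes $(u_t+v_t)^2\geq (u_{t-1}+v_{t-1})(u_{t+1}+v_{t+1})$. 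Expanding, and using $u_t^2\geq u_{t-1}u_{t+1}$ and $v_t^2\geq v_{t-1}v_{t+1}$, it suffices to show the cross-term inequality
\[
2u_tv_t \;\geq\; u_{t-1}v_{t+1}+u_{t+1}v_{t-1}.
\]

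The hard part is establishing this last cross-term inequality, and this is where the third hypothesis does the work. Writing it out, $u_{t-1}v_{t+1}+u_{t+1}v_{t-1}$ equals
\[
p_{g,t-1}b_{g-1,t-1}\,q_{g,t+1}b_{g-1,t} + p_{g,t+1}b_{g-1,t+1}\,q_{g,t-1}b_{g-1,t-2},
\]
while $2u_tv_t = 2\,p_{g,t}q_{g,t}\,b_{g-1,t}b_{g-1,t-1}$. The strategy is to bound each term on the left: one uses log-concavity of $\{b_{g-1,t}\}_t$ to control $b_{g-1,t+1}b_{g-1,t-2}$ against $b_{g-1,t}b_{g-1,t-1}$ (this requires a two-step log-concavity consequence, $b_{g-1,t+1}b_{g-1,t-2}\leq b_{g-1,t}b_{g-1,t-1}$, valid for nonnegative log-concave sequences — proved by chaining $b_{g-1,t+1}b_{g-1,t-2}\leq \tfrac{b_{g-1,t}}{b_{g-1,t-1}}b_{g-1,t}b_{g-1,t-2}\cdots$, with the usual care when terms vanish), and then factors out $b_{g-1,t}b_{g-1,t-1}$ and invokes $p_{g,t-1}q_{g,t+1}+p_{g,t+1}q_{g,t-1}\leq 2p_{g,t}q_{g,t}$. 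Assembling these bounds gives $u_{t-1}v_{t+1}+u_{t+1}v_{t-1}\leq 2p_{g,t}q_{g,t}b_{g-1,t}b_{g-1,t-1}=2u_tv_t$, completing the induction step; the subtlety of internal zeros (needed so that the chained inequalities and divisions are legitimate, or so that one argues directly with products) is the only real technical nuisance, and it is handled by noting all sequences involved are nonnegative with no internal zeros, so ratios can be compared term by term. Finally one concludes that $\{b_{g,t}\}_t$ is log-concave for every $g\geq 1$ by induction on $g$. $\hfill\qed$
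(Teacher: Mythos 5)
Your proof is correct and follows essentially the same route as the paper's: induction on $g$, expanding $b_{g,t}^2$ and $b_{g,t-1}b_{g,t+1}$ via the recurrence, matching the square terms using log-concavity of $\{p_{g,t}\}_t$, $\{q_{g,t}\}_t$ and $\{b_{g-1,t}\}_t$, and controlling the cross terms with the third hypothesis together with the two-step inequality $b_{g-1,t-1}b_{g-1,t}\geq b_{g-1,t-2}b_{g-1,t+1}$. Your packaging via the termwise products $u_t=p_{g,t}b_{g-1,t}$ and $v_t=q_{g,t}b_{g-1,t-1}$ is only cosmetic; if anything you are slightly more careful than the paper in flagging that the two-step inequality requires no internal zeros, whereas the paper simply adopts the stronger form $b_{g-1,m}b_{g-1,n}\geq b_{g-1,m-1}b_{g-1,n+1}$ for $m\leq n$ as its induction hypothesis.
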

\begin{proof}
We prove this by induction on $g$. By induction hypothesis,
$b_{g-1,m} b_{g-1,n} \geq b_{g-1,m-1}b_{g-1,n+1}$ for any $m\leq n$.
For $g$, we expand the product using the recurrence
\begin{eqnarray*}
b_{g,t}^2   &= & (p_{g,t} b_{g-1,t}+q_{g,t} b_{g-1,t-1})^2\\
            &= & p_{g,t}^2 b_{g-1,t}^2 +2 p_{g,t} q_{g,t} b_{g-1,t} b_{g-1,t-1}+ q_{g,t}^2 b_{g-1,t-1}^2
\end{eqnarray*}
and
\begin{eqnarray*}
b_{g,t-1} b_{g,t+1} &= & (p_{g,t-1} b_{g-1,t-1}+q_{g,t-1} b_{g-1,t-2})
                                                 (p_{g,t+1} b_{g-1,t+1}+q_{g,t+1} b_{g-1,t}) \\
   &= & p_{g,t-1} p_{g,t+1} b_{g-1,t-1}  b_{g-1,t+1}+  p_{g,t-1} q_{g,t+1} b_{g-1,t-1}  b_{g-1,t}\\
   &+ &  q_{g,t-1} p_{g,t+1} b_{g-1,t-2} b_{g-1,t+1} + q_{g,t-1}q_{g,t+1} b_{g-1,t-2} b_{g-1,t} .
\end{eqnarray*}
We now compare corresponding terms in the expansion. By assumption and induction hypothesis,
it is clear that
\begin{eqnarray*}
p_{g,t}^2 b_{g-1,t}^2  &\geq & p_{g,t-1} p_{g,t+1} b_{g-1,t-1}  b_{g-1,t+1},\\
q_{g,t}^2 b_{g-1,t-1}^2 &\geq & q_{g,t-1}q_{g,t+1} b_{g-1,t-2} b_{g-1,t}.
\end{eqnarray*}
Also we have
\begin{eqnarray*}
2 p_{g,t} q_{g,t} b_{g-1,t} b_{g-1,t-1}
&\geq&(  p_{g,t-1} q_{g,t+1} +q_{g,t-1} p_{g,t+1}) b_{g-1,t}  b_{g-1,t-1}  \\
&\geq & p_{g,t-1} q_{g,t+1} b_{g-1,t-1}  b_{g-1,t} +q_{g,t-1} p_{g,t+1} b_{g-1,t-2} b_{g-1,t+1},
\end{eqnarray*}
whence the lemma.
\end{proof}

\begin{proposition}
For any fixed $g$, the sequence $\{a_{g,t}\}_{t=0}^g$ is log-concave.
\end{proposition}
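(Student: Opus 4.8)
The plan is to apply Lemma~\ref{L:log} directly to the sequence $\{a_{g,t}\}_{t=0}^g$, using the recurrence~(\ref{E:arec}), namely $a_{g,t}=(2g+t-1)(2g+t-2)(a_{g-1,t}+a_{g-1,t-1})$. In the notation of Lemma~\ref{L:log} we set $b_{g,t}=a_{g,t}$, $p_{g,t}=q_{g,t}=(2g+t-1)(2g+t-2)$; all three quantities are nonnegative for the relevant range. It then suffices to verify the three hypotheses of Lemma~\ref{L:log}.

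First I would check the base case: $\{a_{1,t}\}_t$ has the single nontrivial entry $a_{1,1}=2$ (with $a_{1,0}=a_{1,2}=0$), so the log-concavity inequality $a_{1,t}^2\geq a_{1,t-1}a_{1,t+1}$ is trivially satisfied (the right side is $0$). Next, for each fixed $g\geq 1$ I would show $\{p_{g,t}\}_t=\{q_{g,t}\}_t$ is log-concave in $t$. Writing $m=2g+t$, we need $\bigl((m-1)(m-2)\bigr)^2\geq (m-2)(m-3)\cdot m(m-1)$, i.e. $(m-1)(m-2)\geq m(m-3)$ after cancelling a factor $(m-1)(m-2)$ — wait, more carefully, $(m-1)^2(m-2)^2\geq (m-2)(m-3)m(m-1)$, which reduces to $(m-1)(m-2)\geq m(m-3)=m^2-3m$, i.e. $m^2-3m+2\geq m^2-3m$, i.e. $2\geq 0$; true. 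This confirms log-concavity of the $p$ and $q$ sequences. Since $p_{g,t}=q_{g,t}$ for all $t$, the third hypothesis $p_{g,t-1}q_{g,t+1}+p_{g,t+1}q_{g,t-1}\leq 2p_{g,t}q_{g,t}$ becomes $2p_{g,t-1}p_{g,t+1}\leq 2p_{g,t}^2$, which is exactly the log-concavity of $\{p_{g,t}\}_t$ just verified. Hence all hypotheses of Lemma~\ref{L:log} hold, and the lemma yields that $\{a_{g,t}\}_t$ is log-concave for every $g\geq 1$.

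The main (and only mild) obstacle is bookkeeping with the index ranges: one must be a little careful that the recurrence holds and the boundary entries are $0$ so that the trivial cases of the inequality go through, and that the polynomial inequalities above are checked on the correct range of $m=2g+t$ (with $t\geq 1$, so $m\geq 3$, where all factors are positive). None of this presents a genuine difficulty; the substance of the argument is entirely absorbed into Lemma~\ref{L:log}, and the key simplification is that here $p_{g,t}$ and $q_{g,t}$ coincide, which collapses the third hypothesis into the second. I would therefore present the proof as a short verification of the three bullet conditions of Lemma~\ref{L:log}, concluding the log-concavity of $\{a_{g,t}\}_{t=0}^g$.
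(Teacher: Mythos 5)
Your proposal is correct and follows exactly the paper's route: apply Lemma~\ref{L:log} to the recurrence~(\ref{E:arec}) with $p_{g,t}=q_{g,t}=(2g+t-1)(2g+t-2)$ and verify the three hypotheses. Your write-up is in fact more explicit than the paper's (which simply asserts the verifications are clear), and your observation that equality of $p$ and $q$ collapses the third condition into the second is the same implicit simplification the paper relies on.
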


\begin{proof}
We just need to verify the conditions in Lemma~\ref{L:log}.
Set $p_{g,t}= q_{g,t}= (2g+t-1) (2g+t-2) $ for $g\geq 1$.
It is clear that $\{p_{g,t}\}_t$ and $\{q_{g,t}\}_t$ are log-concave for any $g\geq
1$. Furthermore, $p_{g,t-1} q_{g,t+1}+ p_{g,t+1} q_{g,t-1} \leq 2 p_{g,t} q_{g,t} $ for
all $g\geq 1$, whence $\{a_{g,t}\}_{t=0}^g$ is log-concave.
\end{proof}
\begin{proposition}\label{P:logk}
For any fixed $g$, the sequence $\{\kappa_{g,t}\}_{t=0}^g$ is log-concave.
\end{proposition}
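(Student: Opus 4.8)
The plan is to apply Lemma~\ref{L:log} to the sequence $\{\kappa_{g,t}\}_t$, exactly as was done for $\{a_{g,t}\}_t$ in the preceding proposition, but now using the recurrence~(\ref{E:krec}) from Theorem~\ref{T:kappa}. Rewriting~(\ref{E:krec}) in the form $b_{g,t}= p_{g,t}b_{g-1,t}+q_{g,t}b_{g-1,t-1}$ with $b_{g,t}=\kappa_{g,t}$, reading off $n=2g+t$, we obtain
\[
p_{g,t}=\frac{(2n-3)(2n-5)(n-2)}{n},\qquad
q_{g,t}=\frac{2(2n-3)(2n-5)(2n-7)}{n},
\]
where $n=2g+t$. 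These are nonnegative for the relevant range $1\le t\le g$ (so $n\ge 3$), and the base case $\{\kappa_{1,t}\}_t$ is the single nonzero term $\kappa_{1,1}=1$, trivially log-concave. So it remains to verify the three hypotheses of Lemma~\ref{L:log}: log-concavity of $\{p_{g,t}\}_t$, log-concavity of $\{q_{g,t}\}_t$, and the mixed inequality $p_{g,t-1}q_{g,t+1}+p_{g,t+1}q_{g,t-1}\le 2p_{g,t}q_{g,t}$.

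First I would treat the two log-concavity conditions. Write $p_{g,t}=\tfrac1n f(n)$ with $f(n)=(2n-3)(2n-5)(n-2)$ and $q_{g,t}=\tfrac1n h(n)$ with $h(n)=2(2n-3)(2n-5)(2n-7)$; as $t$ ranges, $n$ ranges over consecutive integers, so log-concavity of $\{p_{g,t}\}_t$ in $t$ is equivalent to $\bigl(f(n)/n\bigr)^2\ge \bigl(f(n-1)/(n-1)\bigr)\bigl(f(n+1)/(n+1)\bigr)$ for consecutive integers $n$, and similarly for $h$. Since each of $f,h$ is a product of linear factors in $n$ with positive leading coefficients, each such product is log-concave as a function on the relevant integers — a product of log-concave positive sequences is log-concave, and an arithmetic-progression sequence $\{an+b\}$ is log-concave — and dividing by the log-concave sequence $\{n\}$ preserves log-concavity only if we are slightly careful; the cleanest route is to expand $p_{g,t}^2-p_{g,t-1}p_{g,t+1}$ and $q_{g,t}^2-q_{g,t-1}q_{g,t+1}$ directly as rational functions of $n$ and check that the numerators are nonnegative polynomials in $n$ for $n\ge 3$. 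This is a routine but finite computation.

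The third condition, the mixed inequality, is where the real work lies, and I expect it to be the main obstacle, because here $p$ and $q$ genuinely differ (unlike in the $a_{g,t}$ case, where $p_{g,t}=q_{g,t}$ made it automatic). Clearing the common denominator $n(n-1)(n+1)$ and abbreviating $N=2n$, the inequality becomes a polynomial inequality in $n$: one must show that
\[
p_{g,t-1}q_{g,t+1}+p_{g,t+1}q_{g,t-1}\le 2p_{g,t}q_{g,t}
\]
reduces, after multiplying through, to a polynomial $Q(n)\ge 0$ for all integers $n\ge 4$ (the smallest $n$ for which $t-1\ge 1$). I would substitute the explicit forms, expand both sides (each is a product of six linear factors divided by two of the three consecutive integers $n-1,n,n+1$), clear denominators, and collect. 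The resulting polynomial $Q(n)$ will have degree at most $7$ or $8$; I would then argue its nonnegativity on $n\ge 4$ either by exhibiting it as a sum of terms each manifestly nonnegative in that range, or by shifting $n=m+4$ and checking all coefficients in $m$ are nonnegative. A useful simplification: $p$ and $q$ share the factor $(2n-3)(2n-5)/n$, so writing $p_{g,t}=\tfrac{(2n-3)(2n-5)}{n}(n-2)$ and $q_{g,t}=\tfrac{(2n-3)(2n-5)}{n}\cdot 2(2n-7)$, the shared factors contribute symmetrically to both sides and, after the dust settles, one is comparing expressions built only from $(n-2)$ and $2(2n-7)$ evaluated at $n-1,n,n+1$ together with the denominator ratios — this should cut the algebra down to a manageable size. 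Once $Q(n)\ge 0$ is confirmed, all three hypotheses of Lemma~\ref{L:log} hold and log-concavity of $\{\kappa_{g,t}\}_{t=0}^g$ follows by the induction already packaged in that lemma.
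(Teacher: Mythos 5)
Your proposal is correct and follows exactly the paper's route: the paper likewise applies Lemma~\ref{L:log} to recurrence~(\ref{E:krec}) with the same choices $p_{g,t}=\frac{(2n-3)(2n-5)(n-2)}{n}$ and $q_{g,t}=\frac{2(2n-3)(2n-5)(2n-7)}{n}$, $n=2g+t$, simply asserting the three hypotheses as clear where you sketch their verification. The extra care you take with the division by $n$ and the mixed inequality is warranted but the computations do go through, so there is no gap.
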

\begin{proof}
Set $p_{g,t}= \frac{(2(2g+t)-3) (2(2g+t)-5) (2g+t -2)}{2g+t}$ and
$q_{g,t}= \frac{2(2(2g+t)-3) (2(2g+t)-5) (2(2g+t)-7)}{2g+t}$ for $g\geq 1$.
It is clear that $\{p_{g,t}\}_t$ and $\{q_{g,t}\}_t$ are log-concave for any
$g\geq 1$ and $p_{g,t-1} q_{g,t+1}+ p_{g,t+1} q_{g,t-1} \leq 2 p_{g,t} q_{g,t}$
for all $g\geq 1$. Therefore the sequence $\{\kappa_{g,t}\}_{t=0}^g$ is log-concave.
\end{proof}

{\bf Remark:} combining the inductive proof of Lemma~\ref{L:log}
with the bijective proof of recurrences of $a_{g,t}$ and $\kappa_{g,t}$,
we can construct an injection from $\cO_{g,t,0} \times \cO_{g,t,0}$  into
$\cO_{g,t+1,0}   \times \cO_{g,t-1,0}$ and
from $\cR_{g,t,0}   \times \cR_{g,t,0}$  into
$\cR_{g,t+1,0}   \times \cR_{g,t-1,0}$. This provides
combinatorial proofs for the log-concavity of
$\{a_{g,t}\}_{t=0}^g$ and $\{\kappa_{g,t}\}_{t=0}^g$.


\section{Discussion}\label{S:discussion}


Define $\mathcal{L}$ to be an operator acting on the sequence $\{a_i\}_{i=0}^n$
as given by
\[
\mathcal{L}(\{a_i\}_{i=0}^n)=\{b_i\}_{i=0}^n
\]
where $b_i=a_i^2-a_{i-1}a_{i+1} $ for $0\leq i \leq n$ under the convention that
$a_{-1} = a_{n+1} = 0$. Clearly, the sequence $\{a_i\}_{i=0}^n$ is log-concave if and only if the
sequence $\{b_i\}_{i=0}^n$ is nonnegative. Given a sequence $\{a_i\}_{i=0}^n$, we say that it is
$k$-fold log-concave, or \emph{$k$-log-concave}, if $\mathcal{L}^j(\{a_i\}_{i=0}^n)$ is a nonnegative sequence
for any $1\leq j \leq k$. A sequence $\{a_i\}_{i=0}^n$ is said to be \emph{infinitely log-concave} if
it is $k$-log-concave for all $k\geq 1$.

It is well-known that, by Newton's inequality, if the polynomial
 $\sum_{i=0}^n a_i x^i$  with positive coefficients has only real zeros, then
 the sequence $\{a_i\}_{i=0}^n$ is
unimodal and log-concave (see \cite{Hardy:52}).
Such a sequence of positive numbers
whose generating function has only real zeros
is called a \emph{P\'{o}lya frequency} sequence in the theory of total
positivity (see \cite{Karlin:68,Brenti:89,Brenti:95}). Furthermore, we have
\begin{theorem}
If the polynomial $f(x)=\sum_{i=0}^n a_i x^i$
has only real and non-positive zeros,
then the sequence
$\{a_i\}_{i=0}^n$ is infinitely log-concave.
\end{theorem}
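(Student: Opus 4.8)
The plan is to reduce the theorem to a \emph{closure property} of the operator $\mathcal{L}$: if a polynomial has only real non‑positive zeros, then so does the polynomial obtained by applying $\mathcal{L}$ to its coefficient sequence. Write $\mathcal{L}p$ for the polynomial whose coefficient sequence is $\mathcal{L}$ applied to that of $p$; so if $p(x)=\sum_{i=0}^m c_ix^i$ then $(\mathcal{L}p)(x)=\sum_{i=0}^m\bigl(c_i^2-c_{i-1}c_{i+1}\bigr)x^i$ with $c_{-1}=c_{m+1}=0$. Granting the closure property, the theorem is immediate: an induction on $j$ shows that $\mathcal{L}^jf$ has only real non‑positive zeros for every $j\ge0$, hence has non‑negative coefficients; but the coefficient sequence of $\mathcal{L}^jf$ is precisely $\mathcal{L}^j(\{a_i\}_{i=0}^n)$, so every iterate $\mathcal{L}^j(\{a_i\}_{i=0}^n)$ is non‑negative, which is exactly the assertion that $\{a_i\}_{i=0}^n$ is infinitely log‑concave. (We may assume $a_n>0$, so all $a_i\ge0$, after replacing $f$ by $-f$ if necessary, which does not alter $\mathcal{L}$.)

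For the closure property I would argue through total positivity. By the Aissen--Schoenberg--Whitney theorem (see \cite{Karlin:68}), a finitely supported non‑negative sequence $(c_0,\dots,c_m)$ is a P\'{o}lya frequency sequence --- equivalently, the one‑sided infinite Toeplitz matrix $T=(c_{j-i})_{i,j\ge0}$, with $c_k:=0$ for $k<0$ or $k>m$, is totally positive --- if and only if $\sum_i c_ix^i$ has only real non‑positive zeros. So it suffices to show: if $T$ is totally positive, then the Toeplitz matrix $B=(b_{j-i})_{i,j\ge0}$ with $b_k:=c_k^2-c_{k-1}c_{k+1}$ is totally positive. The key point is that the second compound matrix $T^{(2)}$, whose rows and columns are indexed by the strictly increasing pairs of indices and whose $\bigl(\{i<i'\},\{j<j'\}\bigr)$‑entry is the minor $\det T[\{i,i'\};\{j,j'\}]$, is totally positive whenever $T$ is --- this is classical, since \emph{all} minors of the compound of a totally positive matrix are non‑negative. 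Now restrict $T^{(2)}$ to the rows indexed by \emph{consecutive} pairs $(i,i+1)$ and the columns indexed by \emph{consecutive} pairs $(j,j+1)$: the resulting entry is $\det\!\begin{pmatrix}c_{j-i}&c_{j+1-i}\\ c_{j-1-i}&c_{j-i}\end{pmatrix}=c_{j-i}^2-c_{j-i-1}c_{j-i+1}=b_{j-i}$, so this submatrix is precisely $B$. A submatrix of a totally positive matrix is totally positive, hence $B$ is totally positive; and since $b_k=0$ for $k\notin\{0,\dots,m\}$ (which follows from $c_{-1}=c_{m+1}=0$), $B$ is the Toeplitz matrix of the finitely supported sequence $(b_0,\dots,b_m)$. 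The converse half of the Aissen--Schoenberg--Whitney theorem then yields that $(\mathcal{L}p)(x)=\sum_ib_ix^i$ has only real non‑positive zeros, proving the closure property.

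The step I expect to be the main obstacle is the appeal to total positivity of the compound matrix: one needs the classical but non‑trivial fact that every minor of $T^{(2)}$ (not merely its entries) is non‑negative, together with enough care in the bookkeeping that the submatrix of $T^{(2)}$ on consecutive pairs is genuinely the Toeplitz matrix of the $\mathcal{L}$‑transformed sequence, boundary conventions included. An essentially equivalent but more black‑box alternative would be to invoke directly the result of Br\"{a}nd\'{e}n that $\mathcal{L}$ preserves the class of polynomials with only real non‑positive zeros; I prefer spelling out the compound‑matrix argument, as it is self‑contained modulo the standard total‑positivity facts that the paper already uses.
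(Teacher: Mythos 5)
Your reduction of the theorem to the closure property---that $\mathcal{L}$ maps polynomials with only real non-positive zeros to polynomials with only real non-positive zeros---is sound, as is the bookkeeping identifying the consecutive-pairs submatrix of $T^{(2)}$ with the Toeplitz matrix of $\{b_k\}$. The gap is exactly the step you flag as the main obstacle: the assertion that the second compound of a totally positive (or totally non-negative) matrix is again totally non-negative is not classical, and it is in fact false. Only the \emph{entries} of $T^{(2)}$ are guaranteed non-negative (they are $2\times 2$ minors of $T$); its own minors need not be. A counterexample is already furnished by a Loewner--Whitney generator: let $E=I+tE_{1,2}$ be the $4\times 4$ upper bidiagonal matrix with $t>0$, which is totally non-negative. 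Indexing rows and columns of $E^{(2)}$ by lexicographically ordered pairs, one computes
$E^{(2)}_{\{1,3\},\{1,4\}}=0$, $E^{(2)}_{\{1,3\},\{2,3\}}=t$, $E^{(2)}_{\{1,4\},\{1,4\}}=1$, $E^{(2)}_{\{1,4\},\{2,3\}}=0$,
so the minor of $E^{(2)}$ on rows $\{1,3\}<\{1,4\}$ and columns $\{1,4\}<\{2,3\}$ equals $0\cdot 0-t\cdot 1=-t<0$. Since this minor depends continuously on the entries and totally non-negative matrices are limits of strictly totally positive ones, the failure persists for strictly totally positive matrices. Which differences $\det A[I_1;J_1]\det A[I_2;J_2]-\det A[I_1;J_2]\det A[I_2;J_1]$ are non-negative on totally non-negative matrices depends delicately on the index sets (this is the subject of Skandera's work on inequalities between products of minors), and it fails for patterns such as the one above.

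What your argument actually requires is only that the consecutive-pairs submatrix of $T^{(2)}$ be totally non-negative when $T$ is the Toeplitz matrix of a finite P\'{o}lya frequency sequence; but by Aissen--Schoenberg--Whitney that statement is \emph{equivalent} to the closure property, i.e.\ to the theorem itself, so at this point the argument becomes circular. That closure property is precisely Br\"{a}nd\'{e}n's theorem \cite{Branden:11}, which this paper invokes without proof; his argument runs along entirely different lines (expressing $a_k^2-a_{k-1}a_{k+1}$, up to normalization, as the Schur function $s_{(2^k)}$ of the negated zeros and then establishing real-rootedness of $\sum_k s_{(2^k)}t^k$ by nontrivial stability techniques), and no compound-matrix proof is known. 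So either you must supply a genuinely new proof of the non-negativity of the specific minors you need---which would itself be a result of independent interest---or you should fall back on citing \cite{Branden:11}, as the paper does.
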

This is conjectured independently by Stanley, McNamara--Sagan~\cite{McNamara:10} and Fisk~\cite{Fisk:08},
and proved by Br\"{a}nd\'{e}n~\cite{Branden:11}.

Let $A_g(x)$ denote the generating polynomial of $a_{g,t}$, i.e.,
$A_g(x)= \sum_{t=0}^g a_{g,t} x^{t}$.

\begin{proposition}
For any fixed $g$, polynomial $A_g(x)$ has only real zeros located in $(-1,0]$.
Therefore, the sequence $\{a_{g,t}\}_{t=0}^g$ is infinitely log-concave.
\end{proposition}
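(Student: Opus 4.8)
The plan is to establish a product formula for $A_g(x)$ by solving the recurrence \eqref{E:arec}. Recall that $a_{g,t}=(2g+t-1)(2g+t-2)(a_{g-1,t}+a_{g-1,t-1})$ with $a_{1,1}=2$. Translating this into generating polynomials, the factor $a_{g-1,t}+a_{g-1,t-1}$ corresponds to multiplication of $A_{g-1}(x)$ by $(1+x)$, but the polynomial prefactor $(2g+t-1)(2g+t-2)$ depends on $t$, so it acts as a differential operator rather than a scalar. Writing $m=2g+t$, the coefficient is $(m-1)(m-2)$; since $x\frac{d}{dx}$ reads off the exponent $t$, the operator $(2g+x\frac{d}{dx}-1)(2g+x\frac{d}{dx}-2)$ implements this. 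First I would set up this operator identity carefully and then massage it: substituting $x=y^2$ (or working with $x^{2g}A_g$) should convert the second-order Euler-type operator into a clean form. The remark already in the paper, namely the exponential generating function identity $1+\sum_{g\ge1}\sum_{t=1}^g \frac{1}{(2g+t)!}a_{g,t}y^{2g+t}x^t = \bigl(\frac{1+y}{1-y}\bigr)^{x/2}\exp(-xy)$, is the key external input: extracting the coefficient of $y^{2g+t}$ from the right-hand side will give $a_{g,t}$ in closed form, and from there $A_g(x)$.

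Concretely, the hoped-for outcome is that $A_g(x)$ factors (up to a positive constant and a power of $x$) as a product of linear factors of the form $(x+c_j)$ with each $c_j>0$, or equivalently that the zeros are $x=-c_j\in(-1,0)$ together with possibly $x=0$. The natural candidates come from the falling-factorial structure: one expects something like $A_g(x)=c\,x \prod_{j}(x + r_j)$ where the $r_j$ are ratios of small integers lying strictly between $0$ and $1$ (the factor $x$ accounts for $a_{g,0}=0$, and $a_{g,1}=(2g)!\cdot\frac{1}{2g+1}$ is the constant term of $A_g(x)/x$, which is nonzero, so $x=0$ is a simple root). Once the explicit factorization with negative real roots in $(-1,0]$ is in hand, the second assertion is immediate: by the Stanley--McNamara--Sagan--Fisk conjecture proved by Br\"and\'en (the Theorem quoted just above this Proposition), a polynomial with only real, non-positive zeros has infinitely log-concave coefficient sequence, hence $\{a_{g,t}\}_{t=0}^g$ is infinitely log-concave.

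The steps in order: (1) derive the differential-operator form $A_g = \tfrac{1}{x^{2g-1}}\,\theta(\theta-1)\bigl(x^{2g-2}(1+x)A_{g-1}\bigr)$ with $\theta=x\frac{d}{dx}$, or directly extract $a_{g,t}$ from the quoted EGF; (2) prove by induction on $g$ that $A_g(x)$ has only real zeros in $(-1,0]$ — either by exhibiting the explicit product, or, if the product is unwieldy, by an interlacing argument showing that applying the operator $\theta(\theta-1)$ and multiplying by $(1+x)$ preserves the property of having all roots real and in $[-1,0]$; (3) invoke Br\"and\'en's theorem to conclude infinite log-concavity. The main obstacle I anticipate is step (2): the operator $\theta(\theta-1)$ does not in general preserve real-rootedness, so the argument cannot be a soft one-line application of a known closure property. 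The resolution should be the explicit closed form for $a_{g,t}$ coming from the EGF — once $A_g(x)$ is written as an honest product of linear factors, real-rootedness and the location of the roots are manifest, and the only remaining work is checking that each root lies strictly above $-1$ (which should follow from the ratio $a_{g,t-1}/a_{g,t}$ being controlled, equivalently from the elementary symmetric function identities), and that the leading behavior does not introduce a root at or beyond $-1$.
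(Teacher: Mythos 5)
There is a genuine gap: your primary route --- extracting a closed form for $a_{g,t}$ from the EGF and exhibiting $A_g(x)$ as a product of linear factors with roots at ``ratios of small integers'' --- cannot work, because the roots are not rational. Already $A_3(x)=720x+2688x^2+2240x^3=16x\,(45+168x+140x^2)$, and the quadratic factor has discriminant $168^2-4\cdot140\cdot45=3024$, which is not a perfect square ($55^2=3025$); its roots are the irrational numbers $(-42\pm 3\sqrt{21})/70\approx -0.40,\,-0.80$. So there is no explicit factorization of the kind you are counting on, and the EGF will not produce one.

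Moreover, the fallback you dismiss is the argument that actually works, and your reason for dismissing it is incorrect. The operator $\theta(\theta-1)$ with $\theta=x\frac{d}{dx}$ is identically equal to $x^2\frac{d^2}{dx^2}$ (both send $x^n$ to $n(n-1)x^n$), and differentiation \emph{does} preserve real-rootedness while keeping all roots inside the convex hull of the original roots; multiplication by $x^2$ and by $(1+x)$ obviously does as well. This is precisely the paper's proof: setting $B_g(x)=x^{2g}A_g(x)$, recurrence~(\ref{E:arec}) becomes $B_g(x)=x^3\frac{d^2}{dx^2}\bigl[x(x+1)B_{g-1}(x)\bigr]$, and induction together with two applications of Rolle's theorem (tracking the multiplicity $2g+1$ of the root at $0$, and noting that the simple root at $-1$ introduced by the factor $(x+1)$ is destroyed by the first differentiation, which is why the roots land in $(-1,0]$ rather than merely $[-1,0]$) yields the real-rootedness claim. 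Your operator identity also has an off-by-one slip --- it should read $A_g(x)=x^{-(2g-1)}\,\theta(\theta-1)\bigl[x^{2g-1}(1+x)A_{g-1}(x)\bigr]$, since the factor $(2g+t-1)(2g+t-2)$ must be read off the exponent $2g+t-1$ --- but that is cosmetic. The substantive problem is that you staked the proof on a nonexistent explicit factorization while discarding the soft closure-property argument that carries the induction. The final step (Br\"and\'en's theorem converting real non-positive zeros into infinite log-concavity) is the same in both your sketch and the paper.
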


\begin{proof}
Set $B_g(x)= x^{2g} A_g(x)= \sum_{t=0}^g a_{g,t} x^{2g+t}$.
It suffices to show that polynomial
$B_g(x)$ has only real zeros located in $(-1,0]$.

Recurrence~(\ref{E:arec}) of $a_{g,t}$  is equivalent to
\[
B_g(x) =  2 x^3 B_{g-1}(x)+2 x^3 (2x+1)\frac{d}{d x} B_{g-1}(x)+ x^4 (x+1)\frac{d^2}{d x^2} B_{g-1}(x),
\]
i.e.,
\begin{equation}\label{E:adif}
B_g(x) =   x^3  \frac{d^2}{d x^2} \Big[x (x+1) B_{g-1}(x)\Big].
\end{equation}
By induction hypothesis, $B_{g-1}(x)$ has all $3g-3$ roots in $(-1,0]$, $2g-1$ of which are at $0$.
Then, applying Rolle's theorem twice we obtain, for the second derivative in eq.~(\ref{E:adif}),
at least $g-2$ roots in $(-1,0)$ and $2g-2$ roots at $0$. Hence,
$B_g(x)$ has all of its $3g$ roots inside $(-1,0]$, $2g+1$ of which are at $0$,
and  $g-1$ roots are in $(-1,0)$.
\end{proof}

Let $K_g(x)$ denote the generating polynomial of $\kappa_{g,t}$, i.e.,
$K_g(x)= \sum_{t=0}^g \kappa_{g,t} x^{t}$.

\begin{conjecture}\label{c:kappa}
For any fixed $g$, polynomial $K_g(x)$ has only real zeros located in $(-\frac{1}{4},0]$.
Therefore, the sequence $\{\kappa_{g,t}\}_{t=0}^g$ is infinitely log-concave.
\end{conjecture}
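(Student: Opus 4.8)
The natural plan is to imitate the argument given just above for $A_g(x)$: translate the two-term recurrence~\eqref{E:krec} into a differential-operator identity relating $K_g$ to $K_{g-1}$, and then prove by induction on $g$ that this operator sends every polynomial with only real roots in $(-\tfrac14,0]$ to another such polynomial. The base case is immediate, since $K_1(x)=x$ has its only root at $0$, and once real-rootedness of $K_g$ with all roots in $(-\tfrac14,0]$ is established, the infinite log-concavity of $\{\kappa_{g,t}\}_{t=0}^g$ follows at once from Br\"and\'en's theorem quoted above. An essentially equivalent second viewpoint exploits the identity $\kappa_{g,t}=2^{-2g}\,\Cat(2g+t-1)\,a_{g,t}$ (immediate from $\kappa_{g,t}=|\cR_{g,t,0}|/2^{2g}$ and $|\cR_{g,t,0}|=\Cat(2g+t-1)a_{g,t}$) together with the already-proven fact that $A_g$ has all roots in $(-1,0]$.

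For the operator identity, set $L_g(x)=x^{2g}K_g(x)=\sum_t\kappa_{g,t}x^{2g+t}$ and $\theta=x\frac{d}{dx}$. Writing $n=2g+t$ and separating, in \eqref{E:krec}, the contribution of $\kappa_{g-1,t}$ (which occurs at exponent $n-2$ in $L_{g-1}$) from that of $\kappa_{g-1,t-1}$ (exponent $n-3$), one obtains
\[
\theta\,L_g(x)=\bigl[x^2\theta+2x^3(2\theta+3)\bigr]\,(4\theta^2-1)\,L_{g-1}(x).
\]
Using $x^2\theta+2x^3(2\theta+3)=x^3\bigl[(1+4x)\tfrac{d}{dx}+6\bigr]$ and dividing by $\theta$ (i.e.\ integrating from $0$, legitimate since $L_g$ is divisible by $x^{2g+1}$) this becomes
\[
L_g(x)=\int_0^x\Bigl[t^2(1+4t)\tfrac{d}{dt}+6t^2\Bigr]\Bigl((4\theta^2-1)L_{g-1}\Bigr)\,dt,
\qquad (4\theta^2-1)L_{g-1}=4x^2L_{g-1}''+4xL_{g-1}'-L_{g-1}.
\]
The factor $(1+4x)$ (and the shadow factorizations $4\theta^2-1=4x^{3/2}\frac{d^2}{dx^2}x^{1/2}$ and $(1+4x)\frac{d}{dx}+6=(1+4x)^{-1/2}\frac{d}{dx}(1+4x)^{3/2}$, which put the singularity at $x=-\tfrac14$) are exactly what one expects to confine the negative roots to $(-\tfrac14,0]$ rather than $(-1,0]$; the half-integer powers can be removed by the substitution $1+4x=y^2$. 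One would then count the $2g+1$ roots at $0$ by bookkeeping and control the remaining $g$ negative roots by applying Rolle's theorem after each interior differentiation, precisely as in the $A_g$ argument.

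In the $\Cat$-reformulation the conjecture becomes the statement that the diagonal operator $x^m\mapsto\Cat(m-1)\,x^m$ carries every polynomial whose roots all lie in $(-1,0]$ to one whose roots all lie in $(-\tfrac14,0]$; by the P\'olya--Schur / Borcea--Br\"and\'en theory this is, up to the rescaling by which $\Cat$ introduces the point $\tfrac14$, a Laguerre--P\'olya-type claim about $\sum_{k\ge0}\frac{\Cat(k)}{(k+1)!}x^k$. This is where I expect the genuine difficulty to lie. The proof for $A_g$ succeeded because the recurrence coefficient $(2g+t-1)(2g+t-2)$ in~\eqref{E:arec} is quadratic in $n$, giving the clean second-order operator $x^3\frac{d^2}{dx^2}[x(x+1)\,\cdot\,]$ for which two applications of Rolle's theorem suffice; here the coefficients of~\eqref{E:krec} are cubic in $n$, the resulting operator is genuinely of third order, and it does \emph{not} factor through maps that individually preserve the target class --- already the diagonal operator $x^m\mapsto(4m^2-1)x^m$ (the factor $4\theta^2-1$ above) fails to preserve real-rootedness in general, e.g.\ it sends the real-rooted $x(1+x)^2$ to a polynomial with complex roots. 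Thus the composite operator must be analyzed as a whole, and one seems to need a new ingredient --- a cleverer factorization, a direct Laguerre--P\'olya verification for the Catalan multiplier above, or an interlacing/contour argument --- with the delicate point being to pin the negative roots down to $(-\tfrac14,0]$ rather than merely to a bounded interval; this is presumably why the statement is recorded only as a conjecture.
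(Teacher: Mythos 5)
The statement you were asked to prove is recorded in the paper as a \emph{conjecture}, and the paper offers no proof of it: all it provides is the reformulation of recurrence~(\ref{E:krec}) as a third-order differential identity for $H_g(x)=x^{2g}K_g(x)$ and the observation that the conjecture would imply real-rootedness of $S_g(x)$ via $S_g(x)=x^{-1}(1+x)^{-1}H_g(x(x+1))$. Your proposal reproduces exactly this partial reduction --- your identity $\theta L_g=\bigl[x^{3}\bigl((1+4x)\tfrac{d}{dx}+6\bigr)\bigr](4\theta^{2}-1)L_{g-1}$ is precisely $x$ times the paper's displayed identity for $\tfrac{d}{dx}H_{g}$, so that part of your algebra checks out --- and you correctly diagnose why the Rolle-type induction that works for $A_g(x)$ (where the recurrence coefficient is quadratic in $n$ and the operator factors cleanly as $x^{3}\tfrac{d^{2}}{dx^{2}}[x(x+1)\,\cdot\,]$) does not carry over: the operator here is genuinely third order and does not visibly factor into steps each of which preserves the class of polynomials with all roots in $(-\tfrac14,0]$. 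So you have not proved the statement, but neither has the paper; your honest conclusion that this is presumably why it is stated only as a conjecture is accurate, and what you have written should be read as a research plan, not a proof.

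One concrete error worth fixing: your claimed witness that $4\theta^{2}-1$ destroys real-rootedness is wrong. Applying $x^{m}\mapsto(4m^{2}-1)x^{m}$ to $x(1+x)^{2}=x+2x^{2}+x^{3}$ gives $3x+30x^{2}+35x^{3}=x(35x^{2}+30x+3)$, and the quadratic factor has discriminant $900-420=480>0$, hence all real negative roots. Since $4m^{2}-1=(2m-1)(2m+1)$ with both factors positive for $m\geq 1$, whether this diagonal map acts as a multiplier sequence on polynomials divisible by $x$ with all roots in $(-\infty,0]$ is exactly the P\'olya--Schur-type question you would need to settle (together with the finer localization of the roots to $(-\tfrac14,0]$), and your example does not decide it either way. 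Until that, or some other new ingredient such as an interlacing argument for the composite operator, is supplied, the gap you yourself identify remains open.
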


Similarly, set $H_g(x)= x^{2g} K_g(x)= \sum_{t=0}^g \kappa_{g,t} x^{2g+t}$.
It suffices to show that polynomial
$H_g(x)$ has only real zeros located in $(-\frac{1}{4},0]$.

Recurrence~(\ref{E:krec}) of $\kappa_{g,t}$  is equivalent to
\begin{eqnarray*}
\frac{d}{d x} H_{g}(x) &=&  -6 x^2 H_{g-1}(x)+ 3 x^2 (12 x+1) \frac{d}{d x} H_{g-1}(x)\\
&&+ 12 x^3  (6x+1)\frac{d^2}{d x^2} H_{g-1}(x)+4 x^4 (4 x+1)\frac{d^3}{d x^3} H_{g-1}(x).
\end{eqnarray*}

By Lemma~\ref{L:s}, the generating polynomial of shapes is given by
\begin{eqnarray*}
S_g(x) &=& \sum_{t=1}^{g} \kappa_{g,t} x^{2g+t-1} (1+x)^{2g+t-1}\\
      &=& x^{-1} (1+x)^{-1} \sum_{t=1}^{g} \kappa_{g,t} x^{2g+t} (1+x)^{2g+t}\\
      &=& x^{-1} (1+x)^{-1} H_g(x(x+1)).
\end{eqnarray*}
Therefore Conjecture~\ref{c:kappa} implies that the polynomial $S_g(x)$
has also only real zeros.
\begin{conjecture}\label{T:log}
For any fixed $g$, the generating polynomial $S_g(x)=\sum_{n=2g}^{6g-2}s_g(n) x^n$ of shapes given by
has only real zeros.
Therefore, the sequence $\{s_g(n)\}_{n=2g}^{6g-2}$ is infinitely log-concave.
\end{conjecture}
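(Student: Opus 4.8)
The plan is to deduce Conjecture~\ref{T:log} from Conjecture~\ref{c:kappa}, so that the real content becomes the real-rootedness of $K_g$. Recall from the computation preceding the conjecture that $S_g(x)=x^{-1}(1+x)^{-1}H_g(x(x+1))$ with $H_g(y)=y^{2g}K_g(y)$. Since $\kappa_{g,0}=0$ we may write $K_g(y)=y\,\widetilde K_g(y)$, where $\deg\widetilde K_g=g-1$, $\widetilde K_g(0)=\kappa_{g,1}>0$ and $\widetilde K_g$ has positive leading coefficient, whence
\[
S_g(x)=x^{2g}(x+1)^{2g}\,\widetilde K_g\big(x(x+1)\big).
\]
Granting Conjecture~\ref{c:kappa}, the $g-1$ roots of $\widetilde K_g$ are real and lie in $(-\tfrac14,0)$. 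The point is that the quadratic substitution $y\mapsto x(x+1)$ carries a polynomial all of whose roots are real and $\ge-\tfrac14$ to a real-rooted polynomial: for a root $\rho\ge-\tfrac14$ the preimage equation $x^{2}+x-\rho=0$ has discriminant $1+4\rho\ge0$, and for $\rho\in(-\tfrac14,0)$ the two preimages are real with product $-\rho>0$ and sum $-1<0$, hence both lie in $(-1,0)$. Together with the roots $0$ and $-1$, each of multiplicity exactly $2g$, this exhibits all $6g-2$ roots of $S_g$ as real and contained in $[-1,0]$; log-concavity of $\{s_g(n)\}$ then follows from Newton's inequalities, and infinite log-concavity from the theorem of Br\"{a}nd\'{e}n quoted above.

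It thus remains to prove Conjecture~\ref{c:kappa}, for which I would imitate the argument given above for $A_g(x)$. The base cases come from Table~\ref{Tab:k}: $K_1(x)=x$, $K_2(x)=21x(1+5x)$, $K_3(x)=x(1485+18018x+50050x^{2})$, each nonzero root lying in $(-\tfrac14,0)$. For the inductive step one needs a compact differential form of recurrence~(\ref{E:krec}). The text records
\[
H_g'(x)=-6x^{2}H_{g-1}+3x^{2}(12x+1)H_{g-1}'+12x^{3}(6x+1)H_{g-1}''+4x^{4}(4x+1)H_{g-1}''',
\]
and the aim would be to rewrite the right-hand side in a form amenable to Rolle's theorem --- ideally as $x^{a}\dfrac{d^{3}}{dx^{3}}\big[Q(x)H_{g-1}(x)\big]$, possibly modulo lower-order terms that are themselves root-preserving, with $Q$ built from the factors $x$, $x+1$ and $4x+1$ so that new roots land in $[-\tfrac14,0]$. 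Granting such an identity, induction gives that $H_{g-1}$ has all $3(g-1)$ roots in $(-\tfrac14,0]$ with a prescribed multiplicity at $0$; multiplying by $Q$ adds only roots at $0$, $-1$, $-\tfrac14$; three applications of Rolle's theorem preserve real-rootedness and move the new roots into the open interval; and the final multiplication by $x^{a}$ together with one anti-differentiation (whose integration constant vanishes since $H_g(0)=0$) recovers $H_g$ with exactly the predicted $3g$ real roots.

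The hard part will be precisely this passage from the recurrence to a single operator identity. For $A_g$ the recurrence collapses to the one-line formula~(\ref{E:adif}), $B_g(x)=x^{3}\frac{d^{2}}{dx^{2}}\big[x(x+1)B_{g-1}(x)\big]$; here the order is three and, as written, the recurrence produces $H_g'$ rather than $H_g$, so the concluding anti-differentiation is not automatically root-preserving and must be justified separately, e.g.\ by showing the intermediate polynomial is itself the derivative of a polynomial with the correct root pattern, so that $H_g$ and not merely $H_g'$ is controlled. Should no such closed factorization be available, the fallback is operator-theoretic: realize the linear map $H_{g-1}\mapsto H_g$ as a composition of operators preserving the class of polynomials with all roots in $[-\tfrac14,0]$, using the P\'{o}lya--Schur theorem on multiplier sequences and the Borcea--Br\"{a}nd\'{e}n description of stability-preserving linear operators, the symbol computation being the remaining technical obstacle. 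A final bookkeeping step, easy but necessary, is to check that the multiplicities at $0$ and $-1$ in $S_g$ are exactly $2g$, so that the count of $6g-2$ real roots is complete.
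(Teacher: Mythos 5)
The statement you are addressing is stated in the paper as a conjecture, and the paper offers no proof of it; the only thing the paper establishes is precisely the reduction you carry out in your first paragraph, namely that $S_g(x)=x^{-1}(1+x)^{-1}H_g(x(x+1))$ (a direct consequence of Lemma~\ref{L:s}), so that real-rootedness of $H_g$ with all roots in $(-\frac{1}{4},0]$ forces real-rootedness of $S_g$. That part of your argument is correct and complete: writing $S_g(x)=x^{2g}(x+1)^{2g}\widetilde K_g(x(x+1))$ and noting that each root $\rho\in(-\frac{1}{4},0)$ of $\widetilde K_g$ pulls back to two real roots of $x^2+x-\rho$ in $(-1,0)$ accounts for all $6g-2$ roots, and the appeal to Br\"{a}nd\'{e}n's theorem for infinite log-concavity is the intended route.

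The genuine gap is that everything now rests on Conjecture~\ref{c:kappa}, which you do not prove and which the paper also leaves open. Your plan to imitate the proof for $A_g(x)$ runs into exactly the obstruction you yourself flag: recurrence~(\ref{E:krec}) translates into a third-order differential identity for $H_g'$, not for $H_g$, and it is not of the clean form $x^{a}\frac{d^{m}}{dx^{m}}\bigl[Q(x)H_{g-1}(x)\bigr]$ analogous to eq.~(\ref{E:adif}). Rolle's theorem pushes roots forward through differentiation, but anti-differentiation does not preserve real-rootedness (for instance $(x-1)^2$ integrates to $\frac{1}{3}(x-1)^3+C$, which has nonreal roots for $C\neq 0$), so even with full control of the right-hand side you could not conclude for $H_g$ without a further idea; the Borcea--Br\"{a}nd\'{e}n fallback is likewise only named, not executed. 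As written, your argument establishes the implication ``Conjecture~\ref{c:kappa} implies Conjecture~\ref{T:log}'' --- which is what the paper already records in the paragraph preceding the statement --- but not the conjecture itself.
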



\section{Appendix}\label{S:appendix}



\subsection{Recursive decomposition of O-trees}


In analogy to the decomposition of C-permutations
and C-decorated trees~\cite{Chapuy:13}, we derive a recursive method to
decompose O-permutations and O-trees. This decomposition can be viewed
also as an analogue to the decomposition of unicellular maps~\cite{Chapuy:11}.

Given an $O$-permutation $\pi$, we can represent $\pi$
as an ordered list of
its cycles, such that all cycles start with its minimal element and are ordered
from left to right such that the minimal elements are in descending order.
We call this representation the \emph{canonical form} of $\pi$.

Let $\mathcal{S}_n$ denote the set of permutations on $[n]$, i.e., sequences of integers.
A \emph{sign sequence} of length $n$ is an $n$-tuple $(i_1,\ldots,i_n)$, where $i_k=\pm$.
\begin{lemma}[Chapuy \textit{et al.}~\cite{Chapuy:13}]\label{lemma:iso_signed}
There is a bijection between permutations on $[n]$
and pairs of an O-permutation on $[n]$ with $n-2g$ cycles
and a sign sequence of length $n-2g-1$, for arbitrary
$0\leq g\leq k=\lfloor \frac{n-1}{2} \rfloor$, i.e.,
\[
\mathcal{S}_{n} \simeq  \biguplus_{g=0}^{k} \{-,+\}^{n-2g-1}\times\cO_{g}(n).
\]
In particular, the O-permutation has one cycle if and only if the
sequence has odd length  and starts with its minimal element.
\end{lemma}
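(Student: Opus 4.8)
The plan is to prove Lemma~\ref{lemma:iso_signed} by induction on $n$, peeling off the cycle of an arbitrary permutation $\tau \in \mathcal{S}_n$ that contains the largest element $n$, in a way that mirrors the canonical form introduced just before the statement. First I would note that every permutation $\tau$ on $[n]$ can be written uniquely by listing its cycles so that each cycle begins with its minimal element and the cycles are arranged left-to-right in decreasing order of those minimal elements; reading off the resulting word gives a bijection $\mathcal{S}_n \simeq \mathcal{S}_n$ between permutations and words, but more usefully it isolates the cycle $c$ containing $n$ as the \emph{last} block when $n$ sits in the cycle whose minimum is largest, and in general lets us address $c$ canonically. The key move: the cycle $c = (a_1, a_2, \ldots, a_m)$ with $a_1 = \min c$ has some length $m$; if $m$ is odd I would like to keep $c$ as an odd cycle of the O-permutation, and if $m$ is even I would split off one transposition-worth of structure and record a sign. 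Concretely, I expect the inductive step to remove the element $n$ (and, in the even case, a suitably chosen neighbor of $n$ inside $c$), apply the inductive bijection to the smaller permutation, and then reinsert, spending a sign bit exactly when parity forces it.

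The cleanest route is probably the one from \cite{Chapuy:13}: set up the bijection recursively on the canonical form. Given $\tau$, look at the first cycle in canonical form, say $c = (a_1, \ldots, a_m)$ with $a_1$ the global minimum among cycle-minima — equivalently work with the cycle containing the current largest label; either convention works and I would fix one. If $m$ is odd, remove $a_m$ from the list, apply induction to the permutation on the remaining $n-1$ letters (after order-preserving relabeling) to get an O-permutation $\sigma'$ on $n-1$ letters with a sign sequence, then reinsert $a_m$ as the last entry of the cycle that currently contains $a_{m-1}$; this adds one to a cycle length. Careful bookkeeping of which cycle lengths change parity is the crux: I need the invariant ``number of cycles $= n - 2g$'' and ``sign sequence length $= n - 2g - 1$'' to be preserved under each step, which pins down when an extra sign must be consumed (an even-length block contributes a transposition that becomes a $\pm$) versus when it is absorbed into an odd cycle. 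The recursion bottoms out at $n = 0$ or $n=1$: the empty permutation corresponds to $g=0$, one cycle, empty sign sequence; the identity on one letter likewise. The ``in particular'' clause — the O-permutation has a single cycle iff the sign sequence has odd length and the word starts with its minimal element — I would verify by tracking, through the recursion, exactly when the process never splits off a transposition, i.e.\ when the original permutation is already a single odd cycle written starting from its minimum.

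The main obstacle I anticipate is not the existence of some map but proving it is a genuine bijection — i.e.\ exhibiting an explicit, manifestly invertible inverse and checking that forward-then-back is the identity. The reinsertion steps must be unambiguous: given the O-permutation with its signs, I must be able to read off, without extra information, which letter(s) to delete and where, and that determinism is precisely what the canonical form (plus the sign bits telling us whether the last operation was an odd-extension or an even-split) is there to supply. A secondary subtlety is the order-preserving relabeling of $[n-1]$ back to a subset of $[n]$: I should phrase the induction on linearly ordered label sets rather than on $[n]$ itself so the relabeling is transparent and does not interfere with the ``starts with its minimal element'' bookkeeping. Since the statement is attributed to Chapuy \textit{et al.}~\cite{Chapuy:13}, I expect the full argument to be a reformulation of theirs for C-permutations specialized to the unsigned-cycle (O-permutation) setting; accordingly I would present the recursion explicitly, state the two invariants, and relegate the routine verification that $\Pi$ and its inverse compose to the identity to a short paragraph, highlighting only the parity accounting that drives the sign-sequence length.
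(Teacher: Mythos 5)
Your proposal correctly identifies the general shape of the statement (canonical cycle form, odd versus even lengths, one sign per parity repair, the count $\#\text{cycles}=n-2g$), but it does not contain the actual construction, and the inductive scheme you sketch has a concrete defect. In the odd case you propose to remove $a_m$ from the cycle $c=(a_1,\dots,a_m)$, apply the inductive bijection to the resulting permutation on $n-1$ letters, and then ``reinsert $a_m$ as the last entry of the cycle that currently contains $a_{m-1}$.'' But removing one element from an odd cycle leaves an even cycle, so the smaller permutation is not closer to being an O-permutation; and after the inductive bijection has rearranged the cycle structure, appending $a_m$ to whatever cycle now contains $a_{m-1}$ flips that cycle's parity and destroys the O-permutation property. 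The invariants you name (``number of cycles $=n-2g$'', ``sign length $=n-2g-1$'') are exactly the crux, and you defer the bookkeeping that would establish them rather than carrying it out; likewise you defer the inverse map and the ``in particular'' clause. As written, this is a plan for a proof, not a proof.

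The paper's argument is not inductive. It reads the permutation as a word $x_1\cdots x_n$, cuts it into blocks at the left-to-right minima, and then makes a single right-to-left pass: an odd block $B$ becomes the signed cycle $^{+}(B)$; an even block $B$ has its \emph{second} element $x$ extracted, becomes $^{-}(B)$ (now of odd length), and $x$ either starts a new singleton block (if $x$ is smaller than everything to its left) or is appended to the end of the preceding block, altering that block's parity before it is processed. The leftmost block is guaranteed to end up odd and yields the one unsigned cycle, which is why the sign sequence has length one less than the number of cycles. The inverse is the left-to-right reversal: a $+$ sign just unseals a cycle into a block, a $-$ sign additionally moves the last element of the preceding block into the second position. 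This precise local rule for the even case --- which element is extracted, where it goes, and how that choice is recovered from the sign --- is the missing idea in your proposal; without it the map is neither well defined nor invertible. If you want to salvage an inductive presentation, the induction would have to be on the number of blocks processed (right to left), not on $n$, and each step must preserve ``all cycles produced so far are odd'' exactly via the second-element extraction rule.
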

The bijection is illustrated in the following example:
\begin{eqnarray*}
78326154&\rightarrow& 78|3|26|154 \rightarrow 78|3|26|^+(154)\rightarrow 78|36|^-(2)|^+(154)\\
    &\rightarrow& 78|^+(6)|^-(3)|^-(2)|^+(154)  \rightarrow (8)| ^-(7)|^+(6)|^-(3)|^-(2)|^+(154) \\
   &\rightarrow&  (8)(7)(6)(3)(2)(154), (-,+,-,-,+).
\end{eqnarray*}
We adopt the convention that
signed cycles  are represented with the sign preceding the cycle as a exponent, such as $^- (12)$.

\begin{proof}[Proof (Chapuy \textit{et al.}~\cite{Chapuy:13})]
Given a sequence $S \in \mathcal{S}_{n}$, decompose $S=x_1 x_2\ldots x_n$ into blocks
$S_1 S_2\cdots S_l$ as follows: traverse the sequence $S$ from left to right. Set $s_1=x_1$
and $s_i$ to be the first element smaller than all elements traversed before. This procedure
generates blocks $S_i$ that start with $s_i$.

Then we define a process to deal with the blocks successively from right to left.
At each step, we have two cases:
\begin{enumerate}
\item if the block $B$ has odd length, turn $B$ into
the signed cycle $^+(B)$;
\item if $B$ has even length, move the second element $x$ of $B$ out of $B$
and turn $B$ into the signed cycle $^-(B)$. If $x$ is the minimum of the
elements to the left of $B$, set $\{x\}$ to be a singleton-block before $^-(B)$
and append $x$ at the end of the block preceding $B$, otherwise.
\end{enumerate}
This right-to-left process ends up with the last block $B$ having odd length
and we produce $(B)$ as the last cycle. This process generates a sign sequence,
$I$, together with an O-permutation, $\pi$.

By construction, $\pi$ is represented in its canonical form and furthermore the number of
signs generated by the process is one less than the number of cycles of the O-permutation.
Accordingly the process defines the mapping
$$
\Phi \colon
\mathcal{S}_{n} \to \biguplus_{g=0}^{k} \{-,+\}^{n-2g-1}\times\cO_{g}(n), \qquad
S\mapsto (I,\pi).
$$
Conversely, given an $O$-permutation $\pi$ with $n-2g$ cycles and a sign sequence $I$
of length $n-2g-1$, write $\pi$ in its canonical form.
Assign each cycle except of the leftmost one with the corresponding sign from the
sign sequence $I$.
Turn the leftmost unsigned cycle $(B)$ into the block $B$.
Then treat the signed cycles from left to right, starting with the second one, as follows:
let $^{\epsilon}(B)$ be the signed cycle to be processed and let $B'$ be the block to the left
of $^{\epsilon}(B)$. Process $^{\epsilon}(B)$ into the block $B$, by either just removing the sign
if $\epsilon=+$ or by removing the sign $\epsilon=-$ and moving the last element of $B'$ to the
second position of $B$. This generates an ordered list of blocks, which can be viewed as a
sequence $S$, i.e.~we have
$$
\Psi \colon \biguplus_{g=0}^{k} \{-,+\}^{n-2g-1}\times\cO_{g}(n)  \to  \mathcal{S}_{n} , \qquad
(I,\pi) \mapsto S.
$$
By construction, $\Psi \circ \Phi=\text{\rm id}$ and $\Phi \circ \Psi=\text{\rm id}$.
\end{proof}

An element of an O-permutation is called \emph{non-minimal} if it is
not the minimum in its cycle. Non-minimal elements play the same role for
O-permutations (and O-trees) as trisections for unicellular maps~\cite{Chapuy:11}.
Indeed, an O-permutation of genus $g$ has $2g$ non-minimal elements
(Lemma 3 in~\cite{Chapuy:11}), and moreover we have
Proposition~\ref{prop:2} and Proposition~\ref{prop:3},
which are an analogue of Proposition~\ref{prop:1}.

\begin{proof}
[Proof of Proposition~\ref{prop:2} (Chapuy \textit{et al.}~\cite{Chapuy:13})]
For $k\geq 1$, let $\cO_g^\star(n)$ be  the set of O-permutations from $\cO_g(n)$
having one labeled non-minimal element. Note that
$\cO_g^\star(n) \simeq 2g\ \!\cO_g(n)$ since an O-permutation
in $\cO_g(n)$ has $2g$ non-minimal elements.

Given $\pi\in\cO_g^\star(n)$, we write the cycle containing the labeled element
$i$ of $\pi$ as a sequence beginning with $i$ and apply bijection $\Phi$ in
Lemma~\ref{lemma:iso_signed}.
This gives a collection $S'$ of $(2k+1)\geq 3$ cycles of odd length,
together with a sign-sequence $I$ of length $2k$.
Hence, replacing the cycle containing the labeled element $i$ with these
$(2k+1)$ cycles, we obtain an O-permutation $\pi'$ of genus $g-k$ with
$2k+1$ labeled cycles.

We have thus shown that $\cO_g^\star(n) \simeq\biguplus_{k=1}^g \{-,+\}^{2k}
\times \cO_{g-k}^{(2k+1)}(n)
\simeq  \biguplus_{k=1}^g  2^{2k} \cO_{g-k}^{(2k+1)}(n)$.
By construction of $\Phi$, the cycles of $\pi$ are obtained from the cycles of $\pi'$
by merging labeled cycles in $S'$ into a single cycle  and the proposition follows.
\end{proof}

\begin{proof}[Proof of Proposition~\ref{prop:3} (Chapuy \textit{et al.}~\cite{Chapuy:13})]
We have by definition $\cT_g(n)=\cE_0(n)\times\cO_g(n+1)$ and
Proposition~\ref{prop:2} guarantees
$2g\ \!\cO_g(n) \simeq\biguplus_{k=1}^g \{-,+\}^{2k}\times \cO_{g-k}^{(2k+1)}(n)$.
Therefore we have
$$
2g\ \!\cT_g(n)\simeq\biguplus_{k=1}^g \{-,+\}^{2k}\times \cT_{g-k}^{(2k+1)}(n)
\simeq\biguplus_{k=1}^g 2^{2k}\cT_{g-k}^{(2k+1)}(n).
$$
The statement about the underlying graphs follows from the fact that
the bijection $\Phi$ in Lemma~\ref{lemma:iso_signed} merges
the labeled cycles into a unique cycle.
\end{proof}

\begin{proof}[Proof of Theorem~\ref{T:main} (Chapuy \textit{et al.}~\cite{Chapuy:13})]
We fix $n$ and prove the theorem by induction on $g$ .
The case $g=0$ is obvious, as there is only one $O$-permutation of size
$(n+1)$ and genus $0$, i.e., the identity permutation and both sides are the
set of plane trees with $n$ edges.

Assume $g>0$. The induction hypothesis ensures that for each $g'<g$,
$2^{2g'}\cE_{g'}^{(2k+1)}(n)\simeq \cT_{g'}^{(2k+1)}(n)$, where the underlying graphs
of the corresponding objects are by construction the same. Thus we have
$$
\biguplus_{k=1}^g 2^{2k}\cdot 2^{2(g-k)} \cE_{g-k}^{(2k+1)}(n)\simeq
\biguplus_{k=1}^g 2^{2k} \cdot \cT_{g-k}^{(2k+1)}(n).
$$
Combining this with eq.~(\ref{eq:bi1}) of Proposition~\ref{prop:1}
and eq.~(\ref{eq:bi2}) of Proposition~\ref{prop:3}, we derive
$$
2g\ \!2^{2g}\cE_g(n)\simeq 2g\ \!\cT_g(n),
$$
where the underlying graphs of corresponding objects are the same.
Note that by construction of corresponding bijections
in Propositions~\ref{prop:1} and~\ref{prop:3},
the $2g$ factor never affect the underlying graphs of corresponding
objects.
Hence, we can extract from this $2g$-to-$2g$ correspondence a
$1$-to-$1$ correspondence, i.e., $ 2^{2g}\cE_g(n)\simeq \cT_g(n)$,
which also preserves the underlying graphs of corresponding
objects. The following diagram
$$
\diagram
2^{2g'} \cE_{g'}(n) \dline \rto  & \cT_{g'}(n) \\
2^{2g'}\cE_{g'}^{(2k+1)}(n)  \dline \rto & \cT_{g'}^{(2k+1)}(n)\uline\\
\biguplus_{k=1}^g 2^{2k}\cdot 2^{2(g-k)} \cE_{g-k}^{(2k+1)}(n) \dline \rto &  \biguplus_{k=1}^g 2^{2k} \cdot \cT_{g-k}^{(2k+1)}(n)\uline \dline\\
2g\ \!2^{2g}\cE_g(n)\dline \rto & 2g\ \!\cT_g(n) \uline\\
2^{2g}\cE_g(n)  \rto & \cT_g(n)\uline
\enddiagram
$$
depicts the construction of the bijection.
\end{proof}

\section*{Acknowledgments}
We acknowledge the financial support of the Future and Emerging Technologies (FET) programme
within the Seventh Framework Programme (FP7) for Research of the European Commission, under
the FET-Proactive grant agreement TOPDRIM, number FP7-ICT-318121.

\bibliographystyle{plain}  
\bibliography{Sp}     

\begin{thebibliography}{10}

\bibitem{Chekhov:14}
J.~E. Andersen, L.~O. Chekhov, R.~C. Penner, and C.~M. Reidys.
\newblock Private communication.
\newblock 2014.

\bibitem{reidys:2013}
J.~E. Andersen, R.~C. Penner, C.~M. Reidys, and M.~S. Waterman.
\newblock Topological classification and enumeration of {RNA} structures by
  genus.
\newblock {\em J. Math. Biol.}, 67(5):1261--1278, 2013.

\bibitem{Branden:11}
P.~Br\"{a}nd\'{e}n.
\newblock Iterated sequences and the geometry of zeros.
\newblock {\em J. Reine Angew. Math.}, 2011(658), 2011.

\bibitem{Brenti:89}
F.~Brenti.
\newblock {\em Unimodal, Log-concave and P\'{o}lya Frequency Sequences in
  Combinatorics}.
\newblock American Mathematical Soc., 1989.

\bibitem{Brenti:95}
F.~Brenti.
\newblock Combinatorics and total positivity.
\newblock {\em J. Combin. Theory Ser. A}, 71(2):175--218, 1995.

\bibitem{Chapuy:11}
G.~Chapuy.
\newblock A new combinatorial identity for unicellular maps, via a direct
  bijective approach.
\newblock {\em Adv. Appl. Math.}, 47(4):874--893, 2011.

\bibitem{Chapuy:13}
G.~Chapuy, V.~F\'{e}ray, and \'{E}. Fusy.
\newblock A simple model of trees for unicellular maps.
\newblock {\em J. Combin. Theory Ser. A}, 120(8):2064--2092, 2013.

\bibitem{Fisk:08}
S.~Fisk.
\newblock Questions about determinants and polynomials.
\newblock arXiv:0808.1850.

\bibitem{Flajolet:07a}
P.~Flajolet and R.~Sedgewick.
\newblock {\em Analytic Combinatorics}.
\newblock Cambridge University Press New York, 2009.

\bibitem{Hardy:52}
G.~H. Hardy, J.~E. Littlewood, and G.~P\'{o}lya.
\newblock {\em Inequalities}.
\newblock Cambridge University Press, 1952.

\bibitem{Harer:86}
J.~Harer and D.~Zagier.
\newblock The euler characteristic of the moduli space of curves.
\newblock {\em Invent. Math.}, 85:457--486, 1986.

\bibitem{Huang:14}
F.~W.~D. Huang and C.~M. Reidys.
\newblock Shapes of topological {RNA} structures.
\newblock arXiv:1403.2908.

\bibitem{Jackson:87}
D.~M. Jackson.
\newblock Counting cycles in permutations by group characters, with an
  application to a topological problem.
\newblock {\em Trans. Amer. Math. Soc.}, 299(2):785--801, 1987.

\bibitem{Karlin:68}
S.~Karlin.
\newblock {\em Total positivity}, volume~1.
\newblock Stanford University Press, 1968.

\bibitem{lando:2004}
S.~K. Lando and A.~K. Zvonkin.
\newblock {\em Graphs on Surfaces and Their Applications}.
\newblock Springer, 2004.

\bibitem{Li}
T.~J.~X. Li and C.~M. Reidys.
\newblock Bijections on planar bicellular maps.
\newblock {\em preprint}.

\bibitem{McNamara:10}
P.~{R.W.} {McNamara} and B.~E. Sagan.
\newblock Infinite log-concavity: Developments and conjectures.
\newblock {\em Adv. in Appl. Math.}, 44(1):1--15, 2010.

\bibitem{penner:2010}
R.~C. Penner, M.~Knudsen, C.~Wiuf, and J.~E. Andersen.
\newblock Fatgraph models of proteins.
\newblock {\em Comm. Pure Appl. Math.}, 63(10):1249--1297.

\bibitem{Huang:11}
C.~M. Reidys, F.~W.~D. Huang, J.~E. Andersen, R.~C. Penner, P.~F. Stadler, and
  M.~E. Nebel.
\newblock Topology and prediction of {RNA} pseudoknots.
\newblock {\em Bioinformatics}, pages 1076--1085, 2011.

\bibitem{Remy:85}
J.-L. R\'{e}my.
\newblock Un proc\'{e}d\'{e} it\'{e}ratif de d\'{e}nombrement d'arbres binaires
  et son application \`{a} leur g\'{e}n\'{e}ration al\'{e}atoire.
\newblock {\em RAIRO Inform. Th\'{e}or.}, 19:179--195, 1985.

\bibitem{stanley:2001}
R.~P. Stanley.
\newblock {\em Enumerative Combinatorics: Volume 2}.
\newblock Cambridge University Press, 2001.

\bibitem{Zagier:95}
D.~Zagier.
\newblock On the distribution of the number of cycles of elements in symmetric
  groups.
\newblock {\em Nieuw Arch. Wish.}, 13:489--495, 1995.

\end{thebibliography}

\end{document}